\pgfplotsset{compat=1.18} 
\numberwithin{equation}{section}
\theoremstyle{plain}%
\newtheorem{theorem}{Theorem}[section]
\newtheorem{corollary}[theorem]{Corollary}
\newtheorem{lemma}[theorem]{Lemma}
\theoremstyle{definition}%
\newtheorem{rem}[theorem]{Remark}
\newtheorem{definition}{Definition}%
\newenvironment{custom_assumption}[1]
  {\innercustomthm}
  {\endinnercustomthm}
\newcommand{\N}{\mathbb{N}}
\newcommand{\E}{\mathbb{E}}
\newcommand{\R}{\mathbb{R}}
\newcommand{\Prob}{\mathbf{P}}
\DeclareMathAlphabet{\mathbbo}{U}{bbold}{m}{n}
\newcommand{\1}{\mathbbo{1}}
\newcommand{\Int}{\mathsf{Int}}
\newcommand{\Cl}{\mathsf{Cl}}
\newcommand{\OM}{\mathrm{M}} 
\newcommand{\LP}{\mathrm{P}} 
\newcommand{\TP}{\mathrm{T}} 
\newcommand{\x}{\mathbf{x}} 
\newcommand{\X}{\boldsymbol{X}} 
\newcommand{\bmu}{\boldsymbol{\mu}} 
\newcommand{\olsi}[1]{\,\overline{\!{#1}}}
\newcommand{\rhobar}{\olsi{\rho}} 
\newcommand{\epsilonbar}{\olsi{\varepsilon}}
\newcommand{\thetabar}{\olsi{\vartheta}}
\begin{document}

\title[Exit-problem for a class of non-Markov processes with path dependency]{Exit-problem for a class of non-Markov processes with path dependency}


\author*[1]{\fnm{Ashot} \sur{Aleksian}}\email{ashot.aleksian@tse-fr.eu}

\author[2]{\fnm{Aline} \sur{Kurtzmann}}\email{aline.kurtzmann@univ-lorraine.fr}
\equalcont{These authors contributed equally to this work.}

\author[3]{\fnm{Julian} \sur{Tugaut}}\email{julian.tugaut@univ-st-etienne.fr}
\equalcont{These authors contributed equally to this work.}

\affil[1]{ \orgname{Toulouse School of Economics}, \orgaddress{\city{Toulouse}, \postcode{31000}, \country{France}}} 

\affil[2]{\orgname{Universit\'e de Lorraine}, \orgdiv{CNRS, Institut Elie Cartan de Lorraine}, \city{Nancy}, \postcode{F-54000}, \country{France}}

\affil[3]{\orgname{Universit\'e Jean Monnet}, \orgdiv{CNRS, Institut Camille Jordan}, \orgaddress{\city{Saint-\'Etienne}, \postcode{42023}, \country{France}}}


\abstract{
We study the exit-time of a self-interacting diffusion from an open domain $G \subset \R^d$. In particular, we consider the equation $$ \dd{X_t} = - \left( \nabla V(X_t)  + \frac{1}{t}\int_0^t\nabla F (X_t - X_s)\dd{s} \right) \dd{t}  + \sigma \dd{W_t}.$$ We are interested in the small-noise ($\sigma \to 0$) behaviour of the exit-time from the potentials' domain of attraction. In this work rather weak assumptions on the potentials $V$ and $F$, and on the domain $G$ are considered. In particular, we do not assume $V$ nor $F$ to be either convex or concave, which covers a wide range of self-attracting and self-repelling stochastic processes possibly moving in a complex multi-well landscape. The Large Deviation Principle for the Self-interacting diffusion with generalized initial conditions is established. The main result of the paper states that, under some assumptions on the potentials $V$ and $F$, and on the domain $G$, the Kramers' type law for the exit-time holds. Finally, we provide a result concerning the exit-location of the diffusion.
}

\keywords{Self-interacting diffusion, exit-time, Kramers' type law, Freidlin-Wentzell theory, metastability, large deviations}


\pacs[MSC Classification]{60K35, 60F10, 60H10}

\maketitle

\section{Introduction}\label{s:Introduction}

In this paper, we consider a stochastic process $(X_t^\sigma, \; t\!\geq\!0)$ living in $\R^d$ defined by a stochastic differential equation that includes an interaction of the process with its own passed trajectory in the drift term. Consider the following \textit{Self-Interacting Diffusion} (SID):
\begin{equation}\label{eq:def:main_sys_no_mu}
    \dd{X^\sigma_t} = - \nabla V(X^\sigma_t)\dd{t}  - \frac{1}{t} \int_0^t \nabla F  (X^\sigma_t - X^\sigma_s) \dd{s} \dd{t}  + \sigma \dd{W_t}.
\end{equation}
$V: \R^d \to \R$ is called \textit{the confinement potential} and represents the general geometry of the space. $F: \R^d \to \R$ is called \textit{the interaction potential} and represents the interaction forces arising between the process and its past. These forces depend only on the distance between the current position and the previous positions of the process and, moreover, we suppose that $\nabla F(0) = 0$ and, for simplicity, $F(0) = 0$. Lastly, $(W_t,\; t\!\geq\!0)$ denotes the standard $d$-dimensional Brownian motion and $\sigma \geq 0$ is a parameter controlling the noise of the system.

Let us now introduce the following group of main assumptions: 

\begin{custom_assumption}{A-1}\label{A-1}
    \hfill
    \begin{enumerate}
        \item \label{A-1.Cont} (regularity) Potentials $V$ and $F$ belong to the space $C^2(\R^d; \R)$.
        \item \label{A-1.Lip} (Lipschitz continuity) There exist $\text{Lip}_{\nabla V}, \text{Lip}_{\nabla F} < \infty$ such that $|\nabla V(x) - \nabla V(y)| \leq \text{Lip}_{\nabla V} |x - y|$ and $|\nabla F(x) - \nabla F(y)| \leq \text{Lip}_{\nabla F} |x - y|$ for any $x,\; y \in \R^d$.
    \end{enumerate}
\end{custom_assumption}

Note that the interaction term can be rewritten using \textit{the empirical measure} $\mu^\sigma_t = \frac{1}{t}\int_0^t \delta_{X_s^\sigma}\dd{s}$, where $\delta_x$ is the Dirac measure on $\R^d$ concentrated at the point $x$. In this work, we also fix the initial condition $x_0 \in \R^d$. That gives us the following representation of the equation \eqref{eq:def:main_sys_no_mu}, which will define the process $X^\sigma$ that we study here:
\begin{equation}\label{eq:SID_main_sys}
    \begin{cases}
    \dd{X^\sigma_t} \!\!\!\!&= - \nabla V(X^\sigma_t)\dd{t}  - \nabla F * \mu^\sigma_t (X^\sigma_t) \dd{t}  + \sigma \dd{W_t}, \\
    \mu^\sigma_t &= \frac{1}{t} \int_0^t \delta_{X^\sigma_s} \dd{s},\\
    X^\sigma_0 &= x_0 \in \R^d \; \text{a.s.}
    \end{cases}
\end{equation}
We emphasize that Assumption~\ref{A-1} implies the global existence and uniqueness of the strong solution to \eqref{eq:SID_main_sys} for any $\sigma \geq 0$. Further details can be found in Section~\ref{s:remarks_and_init_cond}.

In this paper, we consider the exit-time from an open neighbourhood $G \subset \R^d$ in the small-noise regime (i.e. when $\sigma \to 0$). We also suppose the existence of $a := \lim_{t \to \infty} X^0_t$, where $X^0$ is the process \eqref{eq:SID_main_sys} defined for $\sigma = 0$. Let us define {\it the effective potential} $U_a := V + F(\cdot - a)$. Consider the following assumptions on $G$, $a$ and $x_0$:

\begin{custom_assumption}{A-2}\label{A-2}
    \hfill
    \begin{enumerate}
        \item \label{A-2.G_open} (domain $G$) $G \subset \R^d$ is an open, bounded, connected set such that $\partial G = \partial \overline{G}$. The boundary $\partial G$ is a smooth $(d-1)$-dimensional hypersurface. The points $x_0, a \in G$.
        \item \label{A-2.Effect_pot_conv} (stability of $\overline{G}$ under the effective potential) Let $\phi$ be defined as $\phi_t^x = x - \int_0^t \nabla U_a(\phi_s^x) \dd{s}$. For any $x \in \overline{G}$, $\{\phi^x_t\}_{t > 0} \subset G$ and $\phi^x_t \xrightarrow[t\to \infty]{} a$. Moreover, $\nabla V(a) = 0$.
        \item \label{A-2.Strong_attraction_a} (strong attraction around $a$) There exist $\Delta_\mu, \Delta_x > 0$ small enough and constants $0 < K_2 < K_1<\infty$, such that for any $\mu \in  \mathcal{P}_2(\R^d)$ satisfying $\mathbb{W}_2(\mu;\delta_a) \leq \Delta_\mu$ and for any $x \in B_{\Delta_x}(a)$, we have $$\langle \nabla V(x) + \nabla F*\mu(x); x - a \rangle \geq K_1|x - a|^2-K_2|x-a|\mathbb{W}_2\left(\mu;\delta_a\right).$$
    \end{enumerate}
\end{custom_assumption}

Assumption \ref{A-2}.\ref{A-2.G_open} gives some regularity properties that the domain $G$ has to satisfy. Since we consider the small-noise behaviour of the process $X^\sigma$, we expect it to be close to its deterministic counterpart $X^0$, at least for some finite time. In particular, we expect that, with high probability, $X^\sigma$ should first approach the attractor $a$, before leaving the domain $G$. In this case, the drift term of the equation \eqref{eq:SID_main_sys}, $-\nabla V - \nabla F*\mu_t^\sigma$, should be close, at least for some finite time, to $-\nabla V - \nabla F*\delta_a$, which is the negative gradient of the effective potential $U_a$. Assumption \ref{A-2}.\ref{A-2.Effect_pot_conv} guarantees that, first, $a$ is also a stable equilibrium for the gradient of the effective potential $U_a$ and, second, that $G$ is a subset of the basin of attraction of $a$ for the flow generated by $- \nabla U_a$ (see Figure~\ref{fig:potentials_V_U_F}).

Assumption~\ref{A-2}.\ref{A-2.Strong_attraction_a} ensures that the attraction inside a small neighbourhood around $a$ is sufficiently strong. Verifying this assumption for general functions $V$ and $F$ can be challenging. However, we can identify a necessary condition:
\begin{equation}
\label{eq:aux:necessary_condition}
    \nabla^2 V(a) + 2\nabla^2 F(0) \succeq C \, {\rm Id},
\end{equation}
where $C > 0$ is a positive constant and ${\rm Id}$ is the identity matrix. To establish this, let us apply the conditions of Assumption~\ref{A-2}.\ref{A-2.Strong_attraction_a} to $\mu = \delta_{2a - x}$. This yields:
\begin{equation*}
    \int_0^1 \big\langle \big( \nabla^2 V(a + s(x - a)) + 2 \nabla^2 F(2s(x - a)) \big) (x - a); x - a \big\rangle \dd{s} \geq (K_1 - K_2)|x - a|^2,
\end{equation*}
for any $x \in B_{\Delta_x}(a)$. Consequently,
\begin{equation*}
    \sup_{s \in [0, 1]} \left\langle \Big( \nabla^2 V(a + s(x - a)) + 2 \nabla^2 F(2s(x - a)) \Big) \frac{x - a}{|x - a|}; \frac{x - a}{|x - a|} \right\rangle
\end{equation*}
is positive. Therefore, by the continuity of $\nabla^2 V$ and $\nabla^2 F$, we obtain \eqref{eq:aux:necessary_condition}.

For the case of a quadratic interaction potential, $F(x) = \alpha |x|^2/2$ with $\alpha \in \R\setminus\{0\}$, we can additionally derive the sufficient condition:
\begin{equation}
\label{eq:aux:suficient_condition}
    \nabla^2 V(a) \succeq (C - 2 \alpha \wedge 0) \, {\rm Id},
\end{equation}
where $C > 0$. Here, Assumption~\ref{A-2}.\ref{A-2.Strong_attraction_a} is satisfied because:
\begin{equation*}
\begin{aligned}
    \left\langle \nabla V(x) + \alpha \big(x - \int z \mu(\dd{z}) \big); x - a \right\rangle &= \langle \nabla V(x); x - a \rangle + \alpha |x - a|^2  \\
    & \quad - \alpha \left\langle \int (z - a) \mu(\dd{z}); x - a \right\rangle.
\end{aligned}
\end{equation*}
If $\alpha > 0$, this simply gives us the following lower bound in a small neighbourhood of $a$:
\begin{equation*}
\begin{aligned}
    \left\langle \nabla V(x) + \alpha \big(x - \int z \mu(\dd{z}) \big); x - a \right\rangle &\geq \left(\frac{C}{2} + \alpha\right)|x - a|^2 - \alpha \mathbb{W}_2\left(\mu;\delta_a\right) |x - a|,
\end{aligned}
\end{equation*}
which leads to Assumption~\ref{A-2}.\ref{A-2.Strong_attraction_a} by taking $K_1 = (\frac{C}{2} + \alpha)$ and $K_2 = \alpha$. 

In the case $\alpha < 0$, note that for any $r \in (0 , 1)$, by continuity of $\nabla^2 V(a)$, we can find a small enough radius $\Delta_x$ such that for any $x \in B_{\Delta_x}(a)$, we have:
\begin{equation*}
    \left\langle \nabla V(x) + \alpha \big(x - \int z \mu(\dd{z}) \big); x - a \right\rangle \geq \left(r(C - 2\alpha) + \alpha\right)|x - a|^2 + \alpha \mathbb{W}_2\left(\mu;\delta_a\right) |x - a|.
\end{equation*}
In particular, $r$ can be chosen to be equal to $\frac{C - 4\alpha}{2(C - 2 \alpha)} < 1$. That gives us $K_1 = \frac{C}{2} - \alpha$ and $K_2 = -\alpha$. Therefore, under the sufficient condition \eqref{eq:aux:suficient_condition}, Assumption~\ref{A-2}.\ref{A-2.Strong_attraction_a} holds in a small neighbourhood of $a$ for any $\mu \in \mathcal{P}_2(\mathbb{R}^d)$.

Under the assumptions \ref{A-1} and \ref{A-2}, we establish {\it the Kramers' type law} for the exit-time of the Self-interacting diffusion \eqref{eq:SID_main_sys} from an open bounded domain of attraction $G \subset \R^d$.
The following theorem is the main result of the paper. 

\begin{figure}[b]
\begin{minipage}{0.6\columnwidth}
    \centering
\begin{tikzpicture}[scale=1.4]
  \draw[->] (-2, 0) -- (2.4, 0) node[below] {$x$};
  \draw[->] (0, -2) -- (0, 2) node[left] {$ $};
  \draw[scale=1.5, domain=0:1.2, smooth, variable=\x, blue] plot ({\x}, {\x*\x*\x*\x + 4*\x*\x*\x/3 - 2*\x*\x});
  \draw[scale=1.5, domain=0:1.2, smooth, variable=\x, teal] plot ({\x}, {\x*\x*\x*\x + 4*\x*\x*\x/3 - 2*\x*\x - (\x - 0.618)*(\x - 0.618)});

  \draw[scale=0.5, domain=-2.45:0, smooth, variable=\x, blue] plot ({\x}, {3*\x*\x*\x*\x/4 + 3*\x*\x*\x/3 - 3*\x*\x/2 });
  \draw[scale=1.5, domain=-0.4:0, smooth, variable=\x, teal] plot ({\x}, {\x*\x*\x*\x + 4*\x*\x*\x/3 - 2*\x*\x - (\x - 0.618)*(\x - 0.618)});

  \draw (1.5, 1.9) node{$V$};
  \draw (1.95, 1.25) node{$U_a$};
  
  \draw[-, red, thick] (0.42, 0) -- (1.7, 0);
  \draw[dashed] (0.42, -2) -- (0.32, 2);
  \draw[dashed] (1.7, -2) -- (1.7, 2);
  \fill (0.95,0)  circle[radius=1pt];
  \draw (0.95,0.15) node {$a$};
  \draw[->] (0.99, -1) -- (1.5, -0.05) ;
  \draw (0.8, -1.1) node{$G$};
\end{tikzpicture}
\end{minipage}%
\begin{minipage}{0.4\columnwidth}
    \centering
\begin{tikzpicture}[scale=1.4]
  \draw[->] (-1.7, 0) -- (1.7, 0) node[below] {$x$};
  \draw[->] (0, -2) -- (0, 2) node[left] {$F$};
  \draw[scale=1, domain=-1.25:1.25, smooth, variable=\x, blue] plot ({\x}, {- \x*\x });
  \draw (-0.1,-0.15) node {$0$};
\end{tikzpicture}
\end{minipage}
\caption{Examples of possible $V$, $F$, and $G$ in dimension $d = 1$, with $U_a$ being the effective potential. 
}
\label{fig:potentials_V_U_F}

\end{figure}

\begin{theorem}\label{th:main_th}
    Let Assumptions \ref{A-1} and \ref{A-2} be fulfilled. Let the process $X^\sigma$ be the unique strong solution of the system \eqref{eq:SID_main_sys}. Let $\tau_G^\sigma := \inf \{t: X_t^\sigma \notin G\}$ denote the first time when $X^\sigma$ exits the domain $G$. Let $H:= \inf_{x \in \partial G} \{U_a(x) - U_a(a)\}$ be the height of the effective potential. Then, the following two results hold:
    \begin{enumerate}
        \item Kramers' type law: for any $\delta > 0$ we have
    \begin{equation*}
        \lim_{\sigma \to 0}\Prob_{x_0}\left( e^{\frac{2 (H - \delta)}{\sigma^2}} < \tau_G^\sigma < e^{\frac{2 (H + \delta)}{\sigma^2}}\right) = 1;
    \end{equation*}
    \item Exit-location: for any closed set $N \subset \partial G$ such that $\inf_{z \in N} \{U_a(x) - U_a(a)\} > H$ the following limit holds:
    \begin{equation*}
        \lim_{\sigma \to 0} \Prob_{x_0} (X^\sigma_{\tau_G^\sigma} \in N) = 0.
    \end{equation*}

    \end{enumerate}
\end{theorem}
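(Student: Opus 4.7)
The plan is to run the Freidlin--Wentzell program, but routed through the measure-valued state $\mu^\sigma_t$ so that the path dependency is absorbed into a slow variable that can be frozen on the time scale of individual excursions. Concretely, I would prove that before the first exit, $\mu^\sigma_t$ concentrates around the Dirac mass $\delta_a$, reducing the effective drift to $-\nabla(V+F(\,\cdot\,-a))$; the associated quasi-potential at $\partial G$ is exactly $2H$, and Kramers' law then follows from the LDP (with generalized initial conditions) announced earlier in the paper.

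Step 1: Concentration of $\mu^\sigma_t$. I would first establish that, starting from $x_0 \in G$, for every $\eta>0$ there exists $T(\sigma)$ with $T(\sigma)=o(e^{2(H-\delta)/\sigma^2})$ such that, with probability tending to $1$, for all $t \ge T(\sigma)$ and $t < \tau_G^\sigma$ the measure $\mu^\sigma_t$ lies in an $\eta$-neighborhood of $\delta_a$ (in a Wasserstein or bounded-Lipschitz sense). The idea is that, before exiting, typical trajectories perform short noise-driven excursions away from $a$ followed by deterministic relaxation back to $a$, and the $1/t$ averaging in $\mu^\sigma_t$ washes out both the initial history and these rare excursions.

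Step 2: Upper and lower Kramers bounds via frozen LDP. Conditionally on the event of Step~1, the interaction term $\nabla F*\mu^\sigma_t(x)$ is uniformly $\eta$-close to $\nabla F(x-a)$ on compact sets, so on any time window $[t,t+T]$ (with $T$ fixed) the process $X^\sigma$ satisfies the same uniform LDP as the Markov diffusion $\dd{Y} = -\nabla(V(Y)+F(Y-a))\dd{t}+\sigma \dd{W}$, whose quasi-potential at $\partial G$ is $2H$. The upper bound in (i) follows by splitting $[0,e^{2(H+\delta)/\sigma^2}]$ into $\sim e^{2(H+\delta/2)/\sigma^2}$ windows and using the standard LDP lower bound for hitting $\partial G$ in a single window. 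The lower bound follows the classical Freidlin--Wentzell scheme: partition the exit event according to the last visit to a small ball $B(a,\rho)$, and apply the frozen upper LDP bound on each excursion to show that exit before $e^{2(H-\delta)/\sigma^2}$ has vanishing probability. The exit-location statement (ii) is an immediate refinement: for $N$ with $\inf_N(V+F(\,\cdot\,-a)-V(a))>H+\kappa$, the frozen LDP gives an extra factor $e^{-2\kappa/\sigma^2}$, which beats the Kramers time scale.

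The main obstacle is Step~1, since the result has to hold uniformly on intervals of length $\sim e^{2(H+\delta)/\sigma^2}$, vastly longer than the mixing time scales on which the LDP is usually applied. To control this, I would use a bootstrap: first show, via the deterministic flow of the non-perturbed McKean--Vlasov-like ODE and a Gronwall argument on $\mu^\sigma_t - \delta_a$, that starting from any measure in a Wasserstein neighborhood of $\delta_a$ the dynamics keeps $\mu^\sigma_t$ in that neighborhood on windows of length $T$; then iterate over $\sim e^{2(H+\delta)/\sigma^2}/T$ windows, paying at each step only a probability of a large deviation away from $a$, which by Step~2 is summably small. A secondary delicate point is that the constant $a$ used to define $H$ must genuinely be the long-run concentration point of $\mu^\sigma$; this is where the unique-attractor assumption inside $G$ is used and where I would invoke Assumption~\ref{A-2} to close the argument.
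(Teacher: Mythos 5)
Your proposal follows the same route as the paper: establish an LDP for the SID with generalized initial conditions (triples of time, occupation measure, endpoint), prove that the occupation measure $\mu^\sigma_t$ stabilizes near $\delta_a$ until the exit time (this is the paper's Lemma~\ref{lm:gamma_gr_tau}, proved by the excursion/bootstrap argument you describe), compare the rate function to that of the frozen drift $-\nabla(V+F(\cdot-a))$ to identify the cost $H$ (the paper's Lemma~\ref{lm:tau_0=tau_G}), and close the Kramers bounds by the classical excursion decomposition with stopping times $\tau_k,\theta_k$ and a geometric-series estimate, the exit-location statement being the quantitative refinement via a slightly enlarged sublevel set. The structure, the key intermediate statement (uniform concentration of the empirical measure on exponentially long windows by controlling time spent in versus out of a small ball around $a$, and controlling the number of excursions), and the way the non-Markov feature is handled through the generalized initial condition are all the same as in the paper's Sections~\ref{s:LDP}--\ref{s:aux_proofs}.
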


\subsection{State of the art}

Similar systems with path-interaction behaviour have already been studied by numerous researchers for more than 30 years. In most of the papers, the long-time behaviour of the process is considered. One of the first mathematical descriptions of such a process under the name of Self-avoiding random walk is presented by J.R.\ Norris, L.C.G.\ Rogers, and D.\ Williams in \cite{NRW87}. Some stochastic properties of the process along with some long-time behaviour results in a particular setting were shown. The main difference between the system considered there and ours is that in \cite{NRW87} there is no renormalization of the interaction term with time. In \cite{DR92} R.T.\ Durrett and L.C.G.\ Rogers introduce a system that is similar to the one presented in \cite{NRW87} and aims to model the shape of a growing polymer:
\begin{equation*}
    \dd{X_t} =\left( \int_0^t f(X_t - X_s)\dd{s}\right) \dd{t} + \dd{W_t}.
\end{equation*}
Given the physical interpretation of the process, the authors called this model ``the Brownian Polymers''. In this paper, asymptotic bounds in dimension $d = 1$ with some assumptions on the interaction function were presented, along with some conjectures on more precise long-time behaviour. Among those, the authors conjectured that, in the symmetrical, \textit{repulsive} case with limited interaction ($f(-x) = -f(x)$, $x f(x) \geq 0$, and $f$ has compact support), one can show that $X_t/t \xrightarrow{} 0$ a.s. This repulsive case was later studied in the paper \cite{TTV12}, where the conjecture was partially proved (with an additional assumption on smoothness of $f$).

Another branch of models with path-interaction, but this time via the renorma\-lized-with-time occupation measure $\mu_t = \frac{1}{t} \int_0^t \delta_{X_s}\dd{s}$ living on a compact manifold, started to develop with a paper of M. Benaïm, M. Ledoux, and O. Raimond \cite{BLR02}. This work was continued in \cite{BR03}, \cite{BR05}, and others. In the first paper, the authors showed that the asymptotic behaviour of the empirical measure of the process $\mu_t$ can be linked to some deterministic dynamical flow. In the following paper \cite{BR03}, the authors used these techniques to describe some convergence in law properties of the process. Later, in \cite{BR05}, assuming the interaction to be symmetric, the authors gave an almost sure convergence result for the occupation measure $\mu_t$ and described its limit points.

The first step towards studying this process in the non-compact setting, which was $\R^d$, was done by A. Kurtzmann in \cite{kurtzmann2010ode}. The author considers the following model with an interaction that depends on the empirical measure of the process.
\begin{equation*}
     \dd{X_t} = -\left(\nabla V(X_t) + \frac{1}{t} \int_0^t \nabla_x F(X_t,X_s)\dd{s}\right)\dd{t} + \dd{W_t},
\end{equation*}
where $F$ is regular and $V$ is convex at infinity. In this paper, the ergodic properties of $X$ were studied, as well as certain conditions on $V$ and $F$ that guarantee almost sure convergence of $\mu_t$. This work generalizes the previous ones of Benaïm and co-authors.  

In the paper of V. Kleptsyn and A. Kurtzmann \cite{KK12}, the dynamic of the form \eqref{eq:SID_main_sys} with the constant $\sigma = \sqrt{2}$ is considered. The authors proved that, in the case of a convex confinement $V$ with $\lim_{|x| \to +\infty} V(x) = +\infty$, an attractive and symmetrical interaction (the potential $F$ is uniformly convex and spherically symmetrical, i.e. $F(x) = F(|x|)$), and some other regularity assumptions, the occupation measure $\mu_t$ converges (in *-weakly sense) almost surely to a density $\rho_\infty$. It means that there exists a unique density $\rho_\infty: \R^d \to \R_+$ such that
\begin{equation*}
    \mu_t = \frac{1}{t}\int_0^t \delta_{X_s} \dd{s} \xrightarrow[t \to \infty]{*-\text{weakly}} \rho_\infty(x)\dd{x} \quad \text{a.s.}
\end{equation*}
Moreover, $\rho_\infty$ is a function satisfying the following functional equation:
\begin{equation*}
    \rho_\infty = \Pi(\rho_\infty) := \frac{\exp{-V - F*\rho_\infty}}{\int_{\R^d} \exp{-V(y) - F*\rho_\infty (y)}\dd{y}}.
\end{equation*}

That brings us to the second part of the paper, the exit-time problem. The main goal of this work is to study first the exit-time of $X^\sigma$, given by \eqref{eq:SID_main_sys}, from some bounded regular domain $G \subset \R^d$, i.e., the following stopping time $\tau_G^\sigma:= \inf\{t \geq 0: X_t^\sigma \notin G\}$. In particular, we are interested in exits driven by the Brownian motion with a small noise $\sigma > 0$. A similar problem for It\^{o} diffusion was introduced and solved by M.\ Freidlin and A.\ Wentzell. The techniques that they used in a number of papers, starting with \cite{VF70} and presented in their book \cite{FW98}, are known under the name of the Freidlin-Wentzell theory (see also \cite[\S 5.6]{DZ10}). Consider an It\^{o} diffusion in the gradient form:
\begin{equation*}
    \dd{X_t} = - \nabla V(X_t)\dd{t} + \sigma \dd{W_t}.
\end{equation*}
Then, the main result of the Freidlin-Wentzell theory is that, given a domain $G \subset \R^d$ with only one attraction point of the gradient field $-\nabla V$, named $a \in G$, and some other regularity assumptions on $G$ and $V$, the so-called Kramers’ type law for the exit-time $\tau_G^\sigma$ can be established. Namely, if we denote $H := \inf_{z \in \partial G}\{ V(z) - V(a)\}$, then for any starting point $x_0 \in G$ and for any $\delta > 0$
\begin{equation}\label{eq:Kramer}
    \lim_{\sigma \to 0} \Prob_{x_0} (e^{\frac{2(H - \delta)}{\sigma^2}} < \tau^\sigma_G < e^{\frac{2(H + \delta)}{\sigma^2}}) = 1.
\end{equation}

It means that, with decreasing $\sigma > 0$, the exit-time from a stable (or positively invariant) by $-\nabla V$ domain $G$ grows exponentially with a rate that depends on the height $H$ of the potential $V$ inside the domain $G$. 

A similar question was posed by S.\ Herrmann, P.\ Imkeller, and D.\ Peithmann for McKean-Vlasov type diffusion in \cite{HIP08}. In particular, they considered a so-called \textit{Self-stabilizing diffusion} (SSD) of the form
\begin{equation}\label{eq:SSD}
    \begin{cases}
    \dd{X^\sigma_t} \!\!\!\!&= - \nabla V(X^\sigma_t)\dd{t}  - \nabla F * \nu^\sigma_t (X^\sigma_t) \dd{t}  + \sigma \dd{W_t}, \\
    \nu^\sigma_t &= \mathcal{L}(X_t^\sigma),\\
    X^\sigma_0 &= x_0 \in \R^d \; \text{a.s.},
    \end{cases}
\end{equation}
where $\mathcal{L}(X^\sigma_t)$ denotes the law of random variable $X_t^\sigma$. Apart from the obvious parallels that can be drawn between equations \eqref{eq:SID_main_sys} and \eqref{eq:SSD} that define SID and SSD respectively, the following fact makes the self-stabilizing case interesting for the current paper. As was shown in \cite{CGM08}, in the case of convex confinement and interaction and under some other regularity assumptions, the law $\nu_t^\sigma$ converges to the unique probability measure that is the solution of $\nu = \Pi(\nu)$, which is the same behaviour that was established in \cite{KK12}.

Following the Freidlin-Wentzell theory techniques, in \cite{HIP08} the authors first establish a large deviation principle for SSD with general assumptions on $V$ and $F$. After that, they had to restrict themselves to the case of convex confinement and convex interaction potentials in order to achieve, under some stability and regularity assumptions on $G$, the Kramers' type law \eqref{eq:Kramer}. Assuming $F(0) = 0$, the rate $H$ in this case have the form $H = \inf_{z \in \partial G}\{ U_a(z) - U_a(a)\}$. 

In \cite{T12} and \cite{T16}, J. Tugaut studies the exit-time problem for SSD in a convex landscape with a convex interaction, as it was presented in \cite{HIP08}. In these papers he establishes the Kramers' type law, avoiding using the large deviation principle. Instead, the author uses various analytical and coupling methods to deal with the problem, thus simplifying calculations. This work was continued in \cite{T18} and \cite{T19} where the same techniques were used to establish the Kramers' type law in the case of the confinement potential $V$ that is of the double-well form. However, the exit-time for the SSD with general assumptions (as in the Freidlin-Wentzell theory for It\^{o} diffusions) is still an open problem.

The exit-time problem was also studied for the case of the self-interacting diffusion. In \cite{ADMKT22}, A.~Aleksian, P.~Del~Moral, A.~Kurtzmann, and J.~Tugaut prove Kramers' type law for SID in which both the interaction and the confinement potentials $V$ and $F$ are convex. This nice property of the potentials was used by the authors in order to prove this exit-time result by applying analytical and coupling techniques similar to the ones used in \cite{T16}. Since the goal of the current paper is to establish some exit-time result for SID with more general, than in \cite{ADMKT22}, assumptions, we have to use a different approach. In our case, that consists in proving the Large Deviation Principle and restoring the logic of the Freidlin-Wentzell theory for SID.

\subsection{Some remarks on dynamics and initial condition}\label{s:remarks_and_init_cond}

Let us introduce some notation and notions that will be useful to describe laconically later derivations in this paper. 

The process $X^\sigma$ itself is non-Markov, yet the triple $(t, \mu^\sigma_t, X^\sigma_t)$ is. In order to see this, we will describe the notion of the previous path using three parameters. $t_0 \geq 0$ will represent its time-length, $\mu_0 \in \mathcal{P}_2(\R^d)$ will represent the occupation measure of the past trajectory and $x_0 \in \R^d$ its end point starting from which we continue the dynamic. We contract initial conditions into a vector $\mathbf{x} = (t_0, \mu_0, x_0) \in \mathfrak{X} = [0, \infty] \! \times \! \mathcal{P}_2(\R^d) \!\times\! \R^d$, and, in order to be able to operate with the three components of $\mathbf{x}$ separately, we introduce the following projection mappings. Let $\TP: \mathfrak{X} \to [0, \infty]$ be the projection on the first coordinate, returning initial time $t_0$. For the initial empirical measure, let $\OM: \mathfrak{X} \to \mathcal{P}_2(\R^d)$ be the mapping that returns the second coordinate of $\mathbf{x} \in \mathfrak{X}$. Finally, for the starting point of the process, respectively, define the mapping $\LP: \mathfrak{X} \to \R^d$ that returns the third coordinate of $\mathbf{x} \in \mathfrak{X}$.

Following the classical notation for diffusions, we introduce the following system of equations:
\begin{equation}\label{eq:SID_main_sys_gen}
    \begin{cases}
        \dd{\X^{\x, \sigma}_t} \!\!\!\!&= - \nabla V(\X^{\x,  \sigma}_t)\dd{t}  - \nabla F * \bmu^{\x, \sigma}_t (\X^{\x, \sigma}_t) \dd{t}  + \sigma \dd{W_t}, \\
        \bmu^{\x, \sigma}_t &= \frac{\TP{\mathbf{x}}}{\TP{\mathbf{x}} + t}\OM{\mathbf{x}} + \frac{1}{\TP{\mathbf{x}} + t} \int_0^t \delta_{\X^{\x, \sigma}_s}\dd{s},\\
        \X^{\x, \sigma}_0 &= \LP{\mathbf{x}}  \; \text{a.s.}
    \end{cases}
\end{equation}

Hereafter, we will drop the initial conditions from the superscript if it is clear which $\x \in \mathfrak{X}$ is meant. Though, to avoid confusion, we will always use the superscript $\sigma$, which can be equal to $0$ in the deterministic case. For $\x$ such that $\TP{\x} = \infty$ we naturally extend the definition of the processes \eqref{eq:SID_main_sys_gen} as
\begin{equation}\label{eq:SID_main_sys_infty_sigma}
    \begin{cases}
        \dd{\X^{\x, \sigma}_t} \!\!\!\!&= - \nabla V(\X^{\x, \sigma}_t)\dd{t}  - \nabla F * \OM{\x} (\X^{\x, \sigma}_t) \dd{t}  + \sigma \dd{W_t}, \\
        \X^{\x, \sigma}_0 &= \LP{\mathbf{x}}  \; \text{a.s.}
    \end{cases}
\end{equation}

Despite that the introduction of $\x$ aims to describe a notion of a ``previous path'' of the process, the set $\mathfrak{X}$ is defined to be more general. Firstly, we allow $t_0$ to be equal to infinity. Second, $\mu_0$ belongs to $\mathcal{P}_2(\R^d)$, which, in general, does not restrict it to be an empirical measure of any path.

Note that the topology naturally defined on $[0, \infty]$ by open intervals in usual sense and intervals of the form $[0, x)$, $(x, \infty]$ is metrizable. Let us denote $d_{[0, \infty]}$ some metric on this space. We also equip the set $\mathcal{P}_2(\R^d)$ with Wasserstein-2 metric (see e.g. \cite[Definition 6.1]{villani2008optimal}). Thus, the Cartesian product $\mathfrak{X}$ is a complete separable metric space with the following metric $$d_{\mathfrak{X}} (\x_1, \x_2) = \max(d_{[0, \infty]} (\TP{\x_1}, \TP{\x_2}); \mathbb{W}_2(\OM{\x_1}; \OM{\x_2});|\LP{\x_1} - \LP{\x_2}|),$$ where $\mathbb{W}_2$ is the Wasserstein distance (see references in \S\ref{ss:LDP-SID}).

Existence and uniqueness result for stochastic differential equations that depend on their path is standard for the case of Lipschitz continuous drift term (see e.g. \cite[Theorem 11.2]{RW00}). It is worth noting that introducing $\x \in \mathfrak{X}$ does not complicate matters in any way, as long as $\nabla V$ and $\nabla F$ remain Lipschitz continuous. Assumption \ref{A-1} ensures that we are working in this framework. Using the standard argument for linear diffusion processes, it can be shown that the stochastic process $t \mapsto (\TP\x + t, \bmu^{\x, \sigma}_t, \X_t^{\x, \sigma})$ is Markov and, moreover, being continuous, it is also a strong Markov process that takes its values in $\mathfrak{X}$.

\subsection{Outline of the paper}

Section \ref{s:LDP} is dedicated to the Large Deviation Principle. First, we define what we mean by the Large Deviation Principle (LDP). Then, under the set of assumptions~\ref{A-1}, we establish the LDP for self-interacting diffusion in Theorem \ref{th:LDP_gen}. Next, we present some results related to the LDP that are useful for the exit-time problem.

Section \ref{s:Exit-time} deals with the exit-time problem. More precisely, we prove the main result, which is Theorem \ref{th:main_th}. To do so, we present in Section \ref{s:aux_results} auxiliary lemmas that are later used in Section \ref{s:main_proof} to prove the main Theorem \ref{th:main_th}. These lemmas are then proved in Section \ref{s:aux_proofs}.

Section~\ref{s:Generalization} is focused on the generalization of Assumptions \ref{A-1} and \ref{A-2}. In this section, we examine the scenario of a locally Lipschitz continuous drift term ($\nabla V$ and $\nabla F$) and an unbounded domain $G$ from which we want to exit.


\subsection{List of notations}

The paper contains number of technical results. To make them more reader-friendly, we present here a list of notions that are used later in the paper.

\quad

\begin{center}
\begin{tabularx}{0.9\columnwidth} {| >{\centering\arraybackslash \hsize=.35\hsize}X
  | >{\raggedright\arraybackslash \hsize=.65\hsize}X 
  |} 
 \hline
    
 $|\cdot|$, $\langle\cdot \; ; \cdot \rangle$ 	& Euclidean norm and scalar product in $\R^d$ \\ \hline
 
 $\Int(\Gamma)$/$\accentset{\circ}{\Gamma}$, $\Cl(\Gamma)$/$\overline{\Gamma}$, $\partial \Gamma$ 
												& Interior, closure and boundary of a set $\Gamma \subset \R^d$ \\ \hline
    
 $C([0, T]; \R^d)$ 								& The set of continuous functions on $[0, T]$ with values in $\R^d$ equipped with the norm $\|\phi\|_\infty := \max_{t \in [0, T]} |\phi(t)|$ \\ \hline

 $C_0([0, T]; \R^d)$ 							& Subset of $C([0, T]; \R^d)$ consisting of functions $\psi$ such that $\psi(0) = 0$ \\ \hline 
    
 $C^2(\R^d; \R)$ 								& The set of functions on $\R^d$ with values in $\R$ with continuous partial derivatives up to order 2 \\ \hline

 $L([0, T]; \R^d)$ 								& The set of integrable w.r.t the Lebesgue measure functions on $[0, T]$ with values in $\R^d$ \\ \hline

 $L_2([0, T]; \R^d)$ 							& Subset of $L([0, T]; \R^d)$ consisting of functions $\psi$ such that $\int_0^T |\psi(s)|^2\dd{s} < \infty$ \\ \hline

 $\mathcal{P}_2(\R^d)$ 							& The set of probability measures with finite second moments, defined on $\R^d$, and equipped with the Wasserstein-2 distance $\mathbb{W}_2$ (see e.g. \cite[Definition 6.1]{villani2008optimal})\\ \hline

\end{tabularx}
\end{center}

\begin{center}
\begin{tabularx}{0.9\columnwidth} {| >{\centering\arraybackslash \hsize=.35\hsize}X
  | >{\raggedright\arraybackslash \hsize=.65\hsize}X 
  |} 
\hline

 $\mathcal{P}_2(\overline{G})$ 					& The set of probability measures with finite second moment, defined on the closure $\overline{G} \subset \R^d$, where $G$ is the same domain as in Assumption \ref{A-2}, page~\pageref{A-2} \\ \hline

 $(\mathfrak{X}, d_{\mathfrak{X}})$ 			& A metric space consisting of triples $\x = (t, \mu, x)$, $t \in [0, \infty]$, $\mu \in \mathcal{P}_2(\R^d)$, and $x \in \R^d$ (see Section~\ref{s:remarks_and_init_cond})\\ \hline

 $\TP$, $\OM$, $\LP$ 							& Projection maps on the first, second, and third coordinates of an $\x \in \mathfrak{X}$ (see Section~\ref{s:remarks_and_init_cond})\\ \hline

 $B_\rho(a) := \{x \in \R^d: |x - a| \leq \rho\}$, \quad
 $S_\rho(a) := \{x \in \R^d: |x - a| = \rho\}$	& A closed ball and a sphere of radius $\rho$ around $a$ in $\R^d$\\ \hline

 $\mathbb{B}_\rho(\mu) := \{\nu \in \mathcal{P}_2(\overline{G}): \mathbb{W}_2(\mu; \nu) \leq \rho\}$
												& A closed ball of radius $\rho$ around $\mu$ in $\mathcal{P}_2(\overline{G})$, where $G$ will be the same domain as in Assumption~\ref{A-2}, page~\pageref{A-2}\\ \hline
 
 $\mathbb{B}_\rho(\x) := \{\mathbf{y} \in \mathfrak{X}: d_{\mathfrak{X}}(\mathbf{y}, \x) \leq \rho\}$ 
												& A closed ball of radius $\rho$ around $\x$ in $\mathfrak{X}$\\ \hline
    
 $\#$ 											& A push-forward symbol (see e.g. \cite[p.11]{villani2008optimal}) \\ \hline   

\end{tabularx}
\end{center}

\quad

All the equations in the paper are meant to hold a.s. unless stated otherwise. If we consider the probability of events involving $\X^{\sigma}$ or other random variables that depend on the initial condition $\x$, this initial condition is denoted in the subscript of $\Prob$ as $\Prob_\x$.

\section{Large Deviation Principle}\label{s:LDP}

In this paper, the Large Deviations techniques are widely used. Here, by the Large Deviation Principle (LDP) we mean the following asymptotic behaviour of measures. 

\begin{definition}
Family of measures $(\nu^\sigma)_{\sigma > 0}$ defined on some Banach space $B$ equipped with Borel sigma-algebra $\mathcal{B}$ is said to satisfy the Large Deviation Principle with a good rate function $I$ if for any measurable set $\Gamma \in \mathcal{B}$:
    \begin{equation*}
        -\inf_{x \in \accentset{\circ}{\Gamma}} I(x) \leq \liminf_{\sigma \xrightarrow{} 0} \frac{\sigma^2}{2} \log \nu^\sigma(\Gamma) \leq \limsup_{\sigma \xrightarrow{} 0} \frac{\sigma^2}{2} \log \nu^\sigma(\Gamma) \leq -\inf_{x \in \overline{\Gamma}} I(x),
    \end{equation*}
where $I: B \to [0, \infty]$ is a lower-semicontinuous function (this property defines \textit{rate function}) whose level sets $\{x: I(x) \leq \alpha\}$ are compact subsets of $B$ for any $0 \leq \alpha < \infty$ (which means by definition that the rate function is \textit{good}).
\end{definition}

For more information about the LDP, see \cite{DZ10}. Note that our definition of the LDP by its appearance deviates from the conventional one, but they are equivalent up to multiplication of the ``conventional'' rate function by $1/2$. 

\subsection{Establishing the LDP for the SID}\label{ss:LDP-SID}

In this section, we prove the LDP for the Self-Interacting diffusions of the type \eqref{eq:SID_main_sys_gen} under assumptions \ref{A-1}. Consider the following theorem that is the main result of the section.

\begin{theorem}\label{th:LDP_gen}
    Under Assumptions \ref{A-1}, for any $\x \in \mathfrak{X}$ the probability measures $(\nu^{\x, \sigma})_{\sigma > 0}$ induced on $C([0, T]; \R^d)$ by the process $(\X_t^{\sigma})_{0\leq t \leq T}$, which is the unique solution of the system \eqref{eq:SID_main_sys_gen}, satisfy a LDP with the following good rate function:
    \begin{equation}\label{eq:LDP_gen_rate}
    \begin{aligned}
        I_{T}^{\x}(f) &:=  \frac{1}{4}  \int_0^T \Big|\dot{f}(t) + \nabla V(f(t)) + \frac{\TP{\x}}{\TP{\x} + t} \nabla F * \OM{\x}(f(t)) \\
        &\quad + \frac{1}{\TP{\x} + t}\int_0^t \nabla F(f(t) - f(s))\dd{s} \Big|^2\dd{t} ,\; \text{for } f \in H_1^{\LP{\x}} 
    \end{aligned}
    \end{equation}
    and $I_{T}^{\x}(f) := \infty$ otherwise. Here, for $x \in \R^d$, we define $H_1^{x} := \{x + \int_0^{\cdot} g(s) \dd{s}: g \in L_2([0, T]; \R^d) \}$.
\end{theorem}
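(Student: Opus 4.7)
The plan is to apply the Freidlin--Wentzell reduction, adapted to the path-dependent drift. Specifically, I will construct a continuous skeleton map $\Phi_\x$ taking noise paths to solutions, observe that $X^{\x,\sigma} = \Phi_\x(\sigma W)$ almost surely, and then combine Schilder's theorem with the contraction principle.

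First, I would introduce the map $\Phi_\x: C_0([0,T]; \R^d) \to C([0,T]; \R^d)$ that sends $\psi$ to the unique solution $f = \Phi_\x(\psi)$ of the Volterra-type integral equation
\begin{equation*}
    f(t) = \LP{\x} + \int_0^t b^\x(s, f)\,\dd{s} + \psi(t),
\end{equation*}
where
\begin{align*}
    b^\x(s, f) &:= -\nabla V(f(s)) - \frac{\TP{\x}}{\TP{\x} + s} \int_{\R^d} \nabla F(f(s) - y)\,\OM{\x}(\dd{y}) \\
    &\quad - \frac{1}{\TP{\x} + s} \int_0^s \nabla F(f(s) - f(r))\,\dd{r}
\end{align*}
(with the obvious modification when $\TP{\x} = \infty$: the drift reduces to $-\nabla V(f(s)) - \nabla F * \OM{\x}(f(s))$). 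Existence and uniqueness of $f$ follow from Assumption~\ref{A-1}.\ref{A-1.Lip} via Picard iteration on $C([0,T]; \R^d)$ under an exponentially weighted sup norm; note that $\nabla F * \OM{\x}$ is well-defined on $\mathcal{P}_2(\R^d)$ thanks to the linear growth $|\nabla F(x-y)| \leq |\nabla F(0)| + \text{Lip}_{\nabla F}(|x| + |y|)$.

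Next, I would show that $\Phi_\x$ is continuous with respect to the sup norm. For $\psi_1, \psi_2 \in C_0([0,T]; \R^d)$ and $f_i := \Phi_\x(\psi_i)$, Assumption~\ref{A-1}.\ref{A-1.Lip} yields, with $C := \text{Lip}_{\nabla V} + 2\,\text{Lip}_{\nabla F}$,
\begin{equation*}
    |f_1(t) - f_2(t)| \leq \|\psi_1 - \psi_2\|_\infty + C \int_0^t \sup_{u \leq s} |f_1(u) - f_2(u)|\,\dd{s},
\end{equation*}
and Gronwall's lemma gives $\|\Phi_\x(\psi_1) - \Phi_\x(\psi_2)\|_\infty \leq e^{CT}\|\psi_1 - \psi_2\|_\infty$. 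Pathwise uniqueness of this integral equation then identifies $X^{\x,\sigma} = \Phi_\x(\sigma W)$ almost surely. Schilder's theorem provides (in the paper's $\sigma^2/2$ convention) the LDP for $\sigma W$ on $C_0([0,T]; \R^d)$ with good rate $J(\psi) = \frac{1}{4}\int_0^T |\dot\psi(t)|^2\,\dd{t}$ on $H_1^0$ and $+\infty$ otherwise. Applying the contraction principle to the continuous map $\Phi_\x$ delivers the LDP for $(\nu^{\x,\sigma})_{\sigma > 0}$ with rate $I^\x_T(f) = \inf\{J(\psi) : \Phi_\x(\psi) = f\}$. Since $\Phi_\x$ is injective, for $f \in H_1^{\LP{\x}}$ the preimage is the unique absolutely continuous path $\psi = f - \LP{\x} - \int_0^\cdot b^\x(s, f)\,\dd{s}$, whence $\dot\psi(t) = \dot f(t) - b^\x(t, f)$; plugging into $J$ recovers exactly the expression \eqref{eq:LDP_gen_rate}, and for $f \notin H_1^{\LP{\x}}$ no admissible $\psi$ exists, so $I^\x_T(f) = +\infty$.

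The hardest step will be the continuity estimate, where the double integral from the path-interaction term $\int_0^s \nabla F(f(s) - f(r))\,\dd{r}$ must be controlled in a Gronwall-compatible way — that is, bounded by $\sup_{u \leq s}|f_1(u) - f_2(u)|$ rather than by a pointwise difference at time $s$. Once this estimate is secured, the remainder is a mechanical application of Schilder's theorem and the contraction principle; the limiting case $\TP{\x} = \infty$ is actually easier, as the iterated integral disappears and one recovers a standard Freidlin--Wentzell LDP for an Itô SDE with drift $-\nabla V - \nabla F * \OM{\x}$.
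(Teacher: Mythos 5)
Your proposal is correct and follows essentially the same route as the paper's own proof: define the deterministic solution map sending the noise path to the solution of the drift integral equation, establish its continuity on $C_0([0,T];\R^d)$ via Lipschitzness of $\nabla V, \nabla F$ and Grönwall, and then apply Schilder's theorem together with the contraction principle, identifying the rate function through the (injective) skeleton map. Your Grönwall estimate using $\sup_{u\leq s}$ is in fact a slight improvement over the paper's, since it avoids the $T/\TP{\x}$ factor and handles $\TP{\x}=\infty$ cleanly, but this is a cosmetic difference in an otherwise identical argument.
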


\begin{proof}
    Let us define the function $G: C_0([0, T]; \R^d) \to C([0, T]; \R^d)$ that maps every function $g$ to the unique solution of the following equation:
    \begin{equation*}
    \begin{aligned}
        f(t) = \LP{\x} -  \int_0^t\nabla V(f(s))\dd{s} - \int_0^t \frac{\TP{\x}}{\TP{\x} + s} \nabla F*\OM{\x} (f(s))\dd{s} \\
        - \int_0^t\frac{1}{\TP{\x} + s}\int_0^s \nabla F(f(s) - f(u))\dd{u}\dd{s} + g(t).
    \end{aligned}
    \end{equation*}
     Let us show that $G$ is continuous. Fix some $\delta > 0$ and two functions $g_1, g_2 \in C_0([0,T]; \R^d)$ such that $\|g_1 - g_2\|_{\infty} \leq \delta$. Let $f_1 = G(g_1)$, $f_2 = G(g_2)$. Then, using Lipschitz continuity of $\nabla V$ and $\nabla F$ (Assumption \ref{A-1}.\ref{A-1.Lip}) we can get:
    \begin{equation*}
        \begin{aligned}
        |f_1(t) - f_2(t)| & \leq \left( \text{Lip}_{\nabla V} + \text{Lip}_{\nabla F} \right) \int_0^t \big| f_1(s) - f_2(s) \big|\dd{s} \\
        & \quad + \int_0^t \frac{\text{Lip}_{\nabla F}}{\TP{\x} + s} \int_0^s \big| f_1(u) - f_2(u)\big|\dd{u}\dd{s} + \delta \\
        &\leq \left( \text{Lip}_{\nabla V} + \left(1 + \frac{T}{\TP{\x}} \right)\text{Lip}_{\nabla F} \right) \int_0^t |f_1(s) - f_2(s)|\dd{s} + \delta.
        \end{aligned}
    \end{equation*}
  Therefore, by Grönwall's inequality 
    \begin{equation*}
        \|f_1 - f_2\|_{\infty} \leq \delta \exp\left\{\left( \text{Lip}_{\nabla V} + \left(1 + \frac{T}{\TP{\x}} \right)\text{Lip}_{\nabla F} \right)T\right\};
    \end{equation*}
    it means the continuity of the map $G$ for any possible $\x \in \mathfrak{X}$.
    
    By the uniqueness of the solution to \eqref{eq:SID_main_sys_gen}, we can express $\nu^{\x, \sigma}$ as $\nu^{\x, \sigma} = G_{\#} \eta^\sigma$, where $\eta^\sigma$ is the probability measure induced on $C_0([0, T]; \R^d)$ by the path of Brownian motion $W^\sigma = \sigma W$. The LDP for the path of Brownian motion with vanishing noise is known under the name of Schilder theorem (\cite[Theorem~5.2.3]{DZ10}). Since $G$ is continuous, we can apply the Contraction principle (see \cite[Theorem~4.2.1]{DZ10}) and conclude that the family of measures $(\nu^{\x, \sigma})_{\sigma \geq 0}$ satisfies a LDP with the good rate function
    \begin{equation*}
    \begin{aligned}
        I_{T}^{\x}(f) &= 
            \inf_{g \in G^{-1}(f)} \frac{1}{4} \int^T_0 |\dot{g}(t)|^2\dd{t} \\
            &= 
        \begin{cases}
            \frac{1}{4} \int_0^T |\dot{f}(t) + \nabla V(f(t)) + \frac{\TP{\x}}{\TP{\x} + t}\int_{\R^d} \nabla F(f(t) - u) \OM{\x}(\dd{u})\\
            \quad\quad + \frac{1}{\TP{\x} + t}\int_0^t \nabla F(f(t) - f(u))\dd{u}|^2\dd{t} ,\; \text{for } f \in H_1^{\LP{\x}}, \\
            \infty, \quad\text{otherwise}.
        \end{cases}
    \end{aligned}
    \end{equation*}
    
\end{proof}

\begin{rem}
    For simplicity of the notation, we define the rate function corresponding to the system \eqref{eq:SID_main_sys} as $I_T^{x_0}$, i.e.
    \begin{equation*}
        I_T^{x_0}(f) := \begin{cases}
            \frac{1}{4} \int_0^T |\dot{f}(t) + \nabla V(f(t)) + \frac{1}{t}\int_0^t \nabla F(f(t) - f(s)) \dd{s}|^2\dd{t} ,\; \text{for } f \in H_1^{x_0}, \\
            \infty, \quad\text{otherwise}.
            \end{cases}
    \end{equation*}
\end{rem}

\subsection{Results related to the LDP}

Let us first introduce another assumption on the interaction potential $F$. Consider:  

\begin{custom_assumption}{A-3}\label{A-3}     
    There exists $C_{\nabla F} < \infty$ such that $|\nabla F(x)| \leq C_{\nabla F}$ for any $x \in \R^d$.
\end{custom_assumption}

This assumption will be only used in Lemma~\ref{lm:LDP_conv_init_cond} and can be substituted, for example, with a condition that the diffusion $\X^{\sigma}$ does not leave some bounded domain in $\R^d$. Since it is indeed the case in the main Theorem~\ref{th:main_th}, one can suppose that Assumption \ref{A-3} holds without loss of generality. However, for clarity, we opt to explicitly state it whenever it is used.


The following lemma generalizes the large deviation principle for the case of converging initial conditions. Under assumptions \ref{A-1} and \ref{A-3}, the following result holds.

\begin{lemma}\label{lm:LDP_conv_init_cond}
    For any family $\{\x_\sigma\}_{\sigma > 0}$ such that $\x_\sigma \xrightarrow[\sigma \to 0]{d_\mathfrak{X}}~\x$, the following inequalities hold:

     \begin{enumerate}
         \item For any closed $\Phi \subset C([0, T]; \R^d)$
         \begin{equation*}
             \limsup_{\substack{\sigma \to 0}} \frac{\sigma^2}{2} \log \Prob_{\x_\sigma}(\X^{\sigma} \in \Phi) \leq - \inf_{\phi \in \Phi} I_T^{\x} (\phi).
         \end{equation*}
         \item For any open $\Psi \subset C([0, T]; \R^d)$
         \begin{equation*}
             \liminf_{\substack{\sigma \to 0}} \frac{\sigma^2}{2} \log \Prob_{\x_\sigma}(\X^{\sigma} \in \Psi) \geq - \inf_{\phi \in \Psi} I_T^{\x} (\phi).
         \end{equation*}
     \end{enumerate}
\end{lemma}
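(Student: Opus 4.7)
The plan is to transfer the LDP of Theorem~\ref{th:LDP_gen} from the fixed initial condition $\x$ to the sequence $(\x_n)$ via a synchronous coupling: drive both $X^{\x_n,\sigma}$ and $X^{\x,\sigma}$ by the \emph{same} Brownian motion $W$, so that the stochastic integrals cancel in the pathwise difference. Gronwall's inequality applied to the drift gap then yields a deterministic (almost-sure) estimate
\[
\|X^{\x_n,\sigma} - X^{\x,\sigma}\|_\infty \leq C(T,\x)\,\varepsilon_n,
\]
where $\varepsilon_n \to 0$ depends only on $d_\mathfrak{X}(\x_n,\x)$ and not on $\sigma$ or on the realisation of $W$. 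The Wasserstein convergence of the measure component is absorbed via Kantorovich-Rubinstein duality: as $u \mapsto \nabla F(y - u)$ is $\text{Lip}_{\nabla F}$-Lipschitz uniformly in $y$, one has $|\nabla F * \OM{\x_n}(y) - \nabla F * \OM{\x}(y)| \leq \text{Lip}_{\nabla F}\, \mathbb{W}_2(\OM{\x_n},\OM{\x})$.

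Given the pathwise bound, the LDP transfer reduces to a standard neighbourhood-enlargement argument. Fix $\eta > 0$ and pick $n_0$ with $C(T,\x)\varepsilon_n \leq \eta$ for $n \geq n_0$. For a closed $\Phi \subset C([0,T];\R^d)$, set $\Phi^\eta := \{\phi : \inf_{\psi \in \Phi}\|\phi - \psi\|_\infty \leq \eta\}$, which is itself closed; then $\{X^{\x_n,\sigma} \in \Phi\} \subset \{X^{\x,\sigma} \in \Phi^\eta\}$ almost surely, so Theorem~\ref{th:LDP_gen} applied at initial condition $\x$ gives
\[
\limsup_{\substack{n \to \infty \\ \sigma \to 0}} \frac{\sigma^2}{2}\log \Prob_{\x_n}(X^\sigma \in \Phi) \leq -\inf_{\phi \in \Phi^\eta} I_T^\x(\phi).
\]
Letting $\eta \downarrow 0$ and using goodness (compactness of level sets) together with lower semicontinuity of $I_T^\x$, the right-hand side converges to $-\inf_{\phi \in \Phi} I_T^\x(\phi)$. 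The open-set lower bound is perfectly symmetric, using the open shrinkage $\Psi^{-\eta} := \{\phi : \{\psi : \|\psi - \phi\|_\infty < \eta\} \subset \Psi\}$ together with the inclusion $\{X^{\x,\sigma} \in \Psi^{-\eta}\} \subset \{X^{\x_n,\sigma} \in \Psi\}$ and the identity $\bigcup_{\eta > 0}\Psi^{-\eta} = \Psi$ valid for open $\Psi$.

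The main obstacle lies in proving the pathwise Gronwall bound cleanly in the boundary case $\TP{\x} = \infty$, where $X^{\x,\sigma}$ obeys the structurally different dynamics \eqref{eq:SID_main_sys_infty_sigma} while each $X^{\x_n,\sigma}$ still satisfies \eqref{eq:SID_main_sys_gen}. Here I would decompose the drift difference into three contributions: the Wasserstein-continuity term, the coefficient $\frac{t}{\TP{\x_n}+t}\nabla F * \OM{\x_n}$ vanishing as $\TP{\x_n} \to \infty$, and the time-averaged self-interaction $\frac{1}{\TP{\x_n}+t}\int_0^t \nabla F(\cdot - X^{\x_n,\sigma}_s)\,ds$, of magnitude at most $\frac{t}{\TP{\x_n}+t}C_{\nabla F}$. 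The last two are controlled uniformly in $\sigma$ precisely via the boundedness $|\nabla F| \leq C_{\nabla F}$ of Assumption~\ref{A-1}.\ref{A-1.nabla_F} — the unique place where this hypothesis is used, consistent with the remark following its introduction.
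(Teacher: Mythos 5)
Your proof follows the same core strategy as the paper's: a synchronous coupling (driving $X^{\x_n,\sigma}$ and $X^{\x,\sigma}$ by the same Brownian motion) together with Gronwall's inequality yields a deterministic, $\sigma$-uniform pathwise bound $\|X^{\x_n,\sigma}-X^{\x,\sigma}\|_\infty\lesssim d_\mathfrak{X}(\x_n,\x)$, which is then used to transfer the LDP of Theorem~\ref{th:LDP_gen} from $\x$ to the sequence $\x_n$. The only real difference is one of packaging---the paper phrases the pathwise estimate as exponential equivalence of the laws and invokes \cite[Theorem~4.2.13]{DZ10}, whereas you unfold that black box into the explicit $\Phi^\eta$/$\Psi^{-\eta}$ enlargement-shrinkage argument (which is precisely how the cited theorem is proved)---and your explicit handling of the boundary case $\TP{\x}=\infty$, using that the prefactor $s/(\TP{\x_n}+s)$ itself vanishes, is a welcome tightening of a step the paper states somewhat loosely as $O(|\TP{\x_\sigma}-\TP{\x}|)$.
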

\begin{proof}
    We will show that the families of measures $(\nu^{\x_\sigma, \sigma})_{\sigma > 0}$ and $(\nu^{\x, \sigma})_{\sigma > 0}$, which are families of probability measures induced on $C([0, T]; \R^d)$ by $\X^{\x_\sigma, \sigma}$ and $\X^{\x, \sigma}$ respectively, are exponentially equivalent (see \cite[Definition~4.2.10]{DZ10}) for any $\{\x_\sigma\}_{\sigma > 0}$ such that $\x_\sigma \xrightarrow[\sigma \to 0]{d_\mathfrak{X}} \x$.
    
    Indeed, if we define $Z^\sigma_t = \X^{\x_\sigma, \sigma}_t - \X^{\x, \sigma}_t$, where $\X^{\x_\sigma, \sigma}_t$ and $\X^{\x, \sigma}_t$ are driven by the same Brownian motion, then
    \begin{equation*}
        \begin{aligned}
            |Z_t^\sigma|  &\leq |\LP{\x_\sigma} - \LP{\x}| + \int_0^t |\nabla V(\X_s^{\x_\sigma, \sigma}) - \nabla V(\X_s^{\x, \sigma})|\dd{s} \\
            &\quad + \int_0^t |\frac{\TP{\x_\sigma}}{\TP{\x_\sigma} + s} \nabla F * \OM{\x_\sigma} (\X_s^{\x_\sigma, \sigma}) - \frac{\TP{\x}}{\TP{\x} + s} \nabla F * \OM{\x}(\X_s^{\x, \sigma})|\dd{s}\\
            &\quad + \int_0^t \int_0^s \big| \frac{1}{\TP{\x_\sigma} + s}  \nabla F(\X_s^{\x_\sigma, \sigma} - \X_z^{\x_\sigma, \sigma}) - \frac{1}{\TP{\x} + s} \nabla F(\X_s^{\x, \sigma} - \X_z^{\x, \sigma})\big|\dd{z}\dd{s}.
        \end{aligned}
    \end{equation*}
    In order to separate the effect of closeness of $\TP{\x_\sigma}$ to $\TP{\x}$ from $Z_t^\sigma$ in the last two integrals, we add and subtract $\frac{\TP{\x_\sigma}}{\TP{\x_\sigma} + s} \nabla F*\OM{\x}(\X_s^{\x, \sigma})$ and $\frac{1}{\TP{\x_\sigma} + s} \nabla F(\X_s^{\x, \sigma} - \X_z^{\x, \sigma})$ in the corresponding integrals. Since $\nabla V$ and $\nabla F$ are Lipschitz continuous, we get
    \begin{equation*}
        \begin{aligned}
            |Z_t^\sigma| & \leq (1 + \text{Lip}_{\nabla F})d_{\mathfrak{X}}(\x_\sigma, \x) + (\text{Lip}_{\nabla V} + \text{Lip}_{\nabla F}) \int_0^t |Z_s^\sigma| \dd{s} \\
            & \quad + \int_0^t \int_{\R^d}  \left| \left(\frac{\TP{\x_\sigma}}{\TP{\x_\sigma} + s} - \frac{\TP{\x}}{\TP{\x} + s} \right) \nabla F(\X_s^{\x, \sigma} - z) \right| \OM{\x}(\dd{z})\dd{s} \\
            & \quad + \text{Lip}_{\nabla F} \int_0^t \frac{1}{\TP{\x_\sigma} + s} \int_0^s \big( |Z_s^\sigma| + |Z_z^\sigma|\big) \dd{z}\dd{s} \\
            & \leq (1 + \text{Lip}_{\nabla F}) d_{\mathfrak{X}}(\x_\sigma, \x) + \left(\text{Lip}_{\nabla V} + 2 \text{Lip}_{\nabla F} + \frac{T}{\TP{\x_\sigma}}\right) \int_0^t |Z_s^\sigma|\dd{s} \\
            & \quad + C_{\nabla F}\int_0^t \left|\frac{\TP{\x_\sigma}}{\TP{\x_\sigma} + s} - \frac{\TP{\x}}{\TP{\x} + s}  \right| \dd{s},
        \end{aligned}
    \end{equation*}
    where we get the last inequality by applying assumption \ref{A-3} to  $\int_0^t \int_{\R^d}  \left| \left(\frac{\TP{\x_\sigma}}{\TP{\x_\sigma} + s} - \frac{\TP{\x}}{\TP{\x} + s} \right) \nabla F(\X_s^{\x, \sigma} - z) \right| \OM{\x}(\dd{z})\dd{s}$.
    
    
    
    
    Since the expression inside the integral is bounded, we have $\int_0^t \left|\frac{\TP{\x_\sigma}}{\TP{\x_\sigma} + s} - \frac{\TP{\x}}{\TP{\x} + s}  \right| \dd{s} = O(|\TP{\x_\sigma} - \TP{\x}|) = O(d_{\mathfrak{X}}(\x_\sigma, \x))$. Thus, by Grönwall's inequality,
    \begin{equation*}
        |Z_t^\sigma| \leq \left(1 + \text{Lip}_{\nabla F} +  C_{\nabla F} \right)O(d_{\mathfrak{X}}\big(\x_\sigma; \x) \big) \exp{\left(\text{Lip}_{\nabla V} + 2 \text{Lip}_{\nabla F} + \frac{T}{\TP{\x_\sigma}} \right) T}.
    \end{equation*}
    
    It means that $\Prob (|Z_t^\sigma| \geq \delta) = 0$ for any $\delta$ if we choose $\sigma$ to be small enough. That proves exponential equivalence of $(\nu^{\x_\sigma,\sigma})_{\sigma > 0}$ and $(\nu^{\x,\sigma})_{\sigma > 0}$, and, by the contraction principle (see \cite[Theorem~4.2.13]{DZ10}), the lemma itself.
    
\end{proof}

The following lemma elaborates on the idea of the LDP and provides a tool to study the asymptotic behaviour ($\sigma \to 0$) of the process $\X^{\sigma}$ with respect to its initial conditions. This and all the following lemmas of this section hold under Assumption \ref{A-1}.

\begin{lemma}\label{lm:LDP_compact_init_cond}
    For any compact subset $\mathcal{C} \subset \mathfrak{X}$ the following inequalities hold:
    
    \begin{enumerate}
    \item For any closed $\Phi \subset C([0, T]; \R^d)$    
        \begin{equation*}
            \limsup_{\sigma \to 0} \frac{\sigma^2}{2} \log \sup_{\x \in \mathcal{C}} \Prob_{\x}(\X^{\sigma} \in \Phi ) \leq - \inf_{\x \in \mathcal{C}} \inf_{\phi \in \Phi} I_T^{\x}(\phi).
        \end{equation*}
   \item Similarly, for any open $\Psi \subset C([0, T]; \R^d)$
    
        \begin{equation*}
            \liminf_{\sigma \to 0} \frac{\sigma^2}{2} \log \inf_{\x \in \mathcal{C}} \Prob_{\x}(\X^{\sigma} \in \Psi ) \geq -\sup_{\x \in \mathcal{C}} \inf_{\phi \in \Psi} I_T^{\x}(\phi).
        \end{equation*}
    \end{enumerate}
\end{lemma}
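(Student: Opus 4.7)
The plan is to reduce both inequalities to Lemma \ref{lm:LDP_conv_init_cond} by a subsequence extraction argument exploiting the compactness of $\mathcal{C}$. The idea is that the "sup" (resp. "inf") over $\mathcal{C}$ is nearly attained at some $\x_\sigma \in \mathcal{C}$, and by compactness one can pass to a convergent subsequence $\x_{\sigma_n} \to \x^* \in \mathcal{C}$, at which point the uniform-in-initial-condition LDP furnished by Lemma \ref{lm:LDP_conv_init_cond} yields the desired bound.

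For the upper bound, I would let $L := \limsup_{\sigma \to 0} \tfrac{\sigma^2}{2}\log \sup_{\x \in \mathcal{C}} \Prob_\x(X^\sigma \in \Phi)$ and pick $\sigma_n \downarrow 0$ realizing this limsup. For each $n$ select $\x_n \in \mathcal{C}$ such that
\begin{equation*}
    \Prob_{\x_n}(X^{\sigma_n} \in \Phi) \;\geq\; \tfrac{1}{2}\sup_{\x \in \mathcal{C}} \Prob_{\x}(X^{\sigma_n} \in \Phi),
\end{equation*}
so that $\tfrac{\sigma_n^2}{2} \log \Prob_{\x_n}(X^{\sigma_n} \in \Phi)$ differs from $\tfrac{\sigma_n^2}{2}\log \sup_{\x \in \mathcal{C}} \Prob_\x(X^{\sigma_n} \in \Phi)$ by $-\tfrac{\sigma_n^2}{2}\log 2 = o(1)$. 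By compactness of $\mathcal{C}$ in $(\mathfrak{X}, d_{\mathfrak{X}})$, a subsequence $\x_{n_k} \to \x^* \in \mathcal{C}$. Applying the first part of Lemma \ref{lm:LDP_conv_init_cond} to this convergent sequence of initial conditions gives
\begin{equation*}
    L \;=\; \limsup_{k \to \infty} \tfrac{\sigma_{n_k}^2}{2} \log \Prob_{\x_{n_k}}(X^{\sigma_{n_k}} \in \Phi) \;\leq\; -\inf_{\phi \in \Phi} I_T^{\x^*}(\phi) \;\leq\; -\inf_{\x \in \mathcal{C}} \inf_{\phi \in \Phi} I_T^{\x}(\phi),
\end{equation*}
which is exactly the claim.

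For the lower bound, the argument is symmetric. Setting $L' := \liminf_{\sigma \to 0} \tfrac{\sigma^2}{2}\log \inf_{\x \in \mathcal{C}} \Prob_\x(X^\sigma \in \Psi)$, pick $\sigma_n \downarrow 0$ realizing this liminf and for each $n$ choose $\x_n \in \mathcal{C}$ nearly achieving the infimum (say within a factor $2$). Extract a convergent subsequence $\x_{n_k} \to \x^* \in \mathcal{C}$ and apply part 2 of Lemma \ref{lm:LDP_conv_init_cond}:
\begin{equation*}
    L' \;=\; \liminf_{k \to \infty} \tfrac{\sigma_{n_k}^2}{2} \log \Prob_{\x_{n_k}}(X^{\sigma_{n_k}} \in \Psi) \;\geq\; -\inf_{\phi \in \Psi} I_T^{\x^*}(\phi) \;\geq\; -\sup_{\x \in \mathcal{C}} \inf_{\phi \in \Psi} I_T^{\x}(\phi).
\end{equation*}

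The main step is really just ensuring that the "joint $(n,\sigma)$" form of Lemma \ref{lm:LDP_conv_init_cond} is applied correctly along the diagonal subsequence $(\x_{n_k}, \sigma_{n_k})$; this is immediate since $n_k \to \infty$ and $\sigma_{n_k} \to 0$, so the joint limsup/liminf bound specializes to the diagonal one. No additional regularity of $\x \mapsto I_T^{\x}$ is required beyond what is already encoded in Lemma \ref{lm:LDP_conv_init_cond}. The only real subtlety — and the only point where anything could go wrong — is the extraction of a convergent subsequence, which is guaranteed by compactness of $\mathcal{C}$ in the complete separable metric space $(\mathfrak{X}, d_\mathfrak{X})$ introduced in Section \ref{s:remarks_and_init_cond}.
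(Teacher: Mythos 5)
Your proof is correct, but it takes a genuinely different route from the paper's. The paper converts Lemma~\ref{lm:LDP_conv_init_cond} into a uniform-over-a-ball bound: for each $\x\in\mathcal{C}$ it produces a radius $\sigma_\x>0$ such that the LDP bound (up to an $\varepsilon$ slack) holds simultaneously for all initial conditions in $\mathbb{B}_{\sigma_\x}(\x)$ and all $\sigma<\sigma_\x$, then extracts a finite subcover of $\mathcal{C}$ and takes the smallest threshold. You instead realize the limsup (resp.\ liminf) along a sequence $\sigma_n\downarrow 0$, pick near-optimal $\x_n\in\mathcal{C}$ for each $n$, extract a convergent subsequence $\x_{n_k}\to\x^*\in\mathcal{C}$ by compactness, and apply Lemma~\ref{lm:LDP_conv_init_cond} along the resulting diagonal --- a sequential rather than a covering argument, but an equally standard way to exploit compactness, and of roughly the same length. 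One small wrinkle in your lower bound: ``nearly achieving the infimum within a factor $2$'' is not well posed when $\inf_{\x\in\mathcal{C}}\Prob_\x(X^{\sigma_n}\in\Psi)=0$. This is easily repaired: choose $\x_n$ so that $\tfrac{\sigma_n^2}{2}\log\Prob_{\x_n}(X^{\sigma_n}\in\Psi)$ tracks $\tfrac{\sigma_n^2}{2}\log\inf_{\x}\Prob_\x(X^{\sigma_n}\in\Psi)$ in the extended reals; if the infimum does vanish along the diagonal, Lemma~\ref{lm:LDP_conv_init_cond} applied at the limit point $\x^*$ then forces $\inf_{\phi\in\Psi}I^{\x^*}_T(\phi)=\infty$, so the asserted inequality becomes vacuous there.
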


\begin{proof}
    For the first inequality, for each fixed $\varepsilon > 0$ let us define $$I^\varepsilon := \min \left\{\inf_{\x \in \mathcal{C}} \inf_{\phi \in \Phi} I_T^{\x}(\phi) - \varepsilon; \frac{1}{\varepsilon} \right\}.$$
    
    By Lemma \ref{lm:LDP_conv_init_cond} for any $\x \in \mathcal{C}$ there exists small enough $\sigma_{\x} > 0$ such that for any $\sigma < \sigma_{\x}$
    \begin{equation*}
        \frac{\sigma^2}{2} \log \sup_{\mathbf{y} \in B_{\sigma_\x}(\x)} \Prob_{\mathbf{y}}(\X^\sigma \in \Phi) \leq - I^\varepsilon.
    \end{equation*}
    
    Since $\mathcal{C}$ is compact, we can cover it by finite amount of $B_{\sigma_{\x_i}}(\x_i)$ for some $\x_1, \dots, \x_m \in \mathcal{C}$. Then, for any $\sigma < \min_{1 \leq i \leq m}\sigma_{\x_i}$
    \begin{equation*}
        \frac{\sigma^2}{2} \log \sup_{\x \in \mathcal{C}} \Prob_{\x}(\X^{\sigma} \in \Phi ) \leq - I^\varepsilon,
    \end{equation*}
    and that proves the first inequality. One can prove the second inequality the same way.
\end{proof}

By definition, rate functions are lower semicontinuous, i.e. for any $\x \in \mathfrak{X}$, the inequality $\liminf_{\phi \to \phi_0} I^\x_T(\phi) \geq I^\x_T(\phi_0)$ holds, or, equivalently, all level sets $L_\alpha := \{\phi \in C([0, T]; \R^d): I^\x_T(\phi) \leq \alpha\}$ are closed for $0 \leq \alpha < \infty$. An immediate consequence of this property is that infima of $I^\x_T$ are achieved over compact sets. The following lemma extends this lower semicontinuity property of our rate functions to the case of converging initial conditions besides the argument of $I_T$. In other words, $I_T$ as a function of two arguments $(\x, \phi) \in \mathfrak{X} \times C([0, T]; \R^d)$ still possesses lower semicontinuity property.

\begin{lemma}\label{lm:lower_semicont_I}
For any $T > 0$, $\x \in \mathfrak{X}$, $\phi \in C([0, T]; \R^d)$, and any sequences $\{\x_n\}$, $\{\phi_n\}$, where $\x_n \in \mathfrak{X}$, $\phi_n \in C([0, T]; \R^d)$, such that $\x_n \xrightarrow[n \to \infty]{d_\mathfrak{X}} \x$ and $\phi_n \xrightarrow[n \to \infty]{} \phi$ the following inequality holds:
\begin{equation*}
    \liminf_{n \to \infty} I^{\x_n}_T(\phi_n) \geq I^{\x}_T(\phi).
\end{equation*}
\end{lemma}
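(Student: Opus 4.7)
The plan is to extract from $\{(\x_n, \phi_n)\}$ a subsequence along which the liminf is realized, verify that the limit $\phi$ lies in the effective domain $H_1^{\LP{\x}}$, show that the drift part of $I_T^{\x_n}$ converges strongly in $L^2$ while the derivatives converge only weakly, and then invoke the weak lower-semicontinuity of the squared $L^2$-norm. Throughout, it is convenient to abbreviate
\begin{equation*}
    b^{\mathbf{y}}(t,f) := \nabla V(f(t)) + \frac{\TP{\mathbf{y}}}{\TP{\mathbf{y}}+t}\,\nabla F*\OM{\mathbf{y}}(f(t)) + \frac{1}{\TP{\mathbf{y}}+t}\int_0^t \nabla F(f(t)-f(s))\dd{s},
\end{equation*}
so that $I_T^{\mathbf{y}}(f) = \tfrac14\int_0^T |\dot f(t) + b^{\mathbf{y}}(t,f)|^2\dd{t}$ on $H_1^{\LP{\mathbf{y}}}$ and $+\infty$ otherwise.

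I may assume $M:=\liminf_n I_T^{\x_n}(\phi_n)<\infty$ and, passing to a subsequence, that $I_T^{\x_n}(\phi_n)\to M$. In particular each $\phi_n$ lies in $H_1^{\LP{\x_n}}$. Since $\phi_n\to\phi$ uniformly, the paths stay in a fixed compact set $K\subset\R^d$, so $\sup_n \sup_{t\le T}|\nabla V(\phi_n(t))|<\infty$; Assumption~\ref{A-1}.\ref{A-1.nabla_F} bounds the remaining two summands of $b^{\x_n}(t,\phi_n)$ by $C_{\nabla F}$. Hence $\|b^{\x_n}(\cdot,\phi_n)\|_{L^2}$ is uniformly bounded, and from $\|\dot\phi_n + b^{\x_n}(\cdot,\phi_n)\|_{L^2}^2 = 4\,I_T^{\x_n}(\phi_n)\to 4M$ I infer a uniform bound on $\|\dot\phi_n\|_{L^2}$. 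A further subsequence yields $\dot\phi_n\rightharpoonup g$ weakly in $L^2([0,T];\R^d)$. Since $\phi_n(0)=\LP{\x_n}\to\LP{\x}$ and, for each $t$, $\phi_n(t) = \LP{\x_n} + \int_0^t\dot\phi_n(s)\dd{s}$, comparing the uniform limit $\phi(t)$ with the weak limit $\LP{\x}+\int_0^t g(s)\dd{s}$ forces $\phi\in H_1^{\LP{\x}}$ with $\dot\phi=g$.

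Next I claim $b^{\x_n}(\cdot,\phi_n)\to b^{\x}(\cdot,\phi)$ strongly in $L^2([0,T];\R^d)$. For the $\nabla V$ term this follows from uniform convergence of $\phi_n$ and the Lipschitz estimate of Assumption~\ref{A-1}.\ref{A-1.Lip}. For the convolution term I use the Kantorovich--Rubinstein-type bound
\begin{equation*}
    |\nabla F*\mu(x) - \nabla F*\nu(y)| \le \text{Lip}_{\nabla F}\bigl(\mathbb{W}_1(\mu,\nu) + |x-y|\bigr) \le \text{Lip}_{\nabla F}\bigl(\mathbb{W}_2(\mu,\nu) + |x-y|\bigr),
\end{equation*}
applied with $\mu=\OM{\x_n}$, $\nu=\OM{\x}$, $x=\phi_n(t)$, $y=\phi(t)$; the prefactor $\TP{\x_n}/(\TP{\x_n}+t)$ converges to $\TP{\x}/(\TP{\x}+t)$ pointwise in $t$ by continuity of $d_{[0,\infty]}$, and Assumption~\ref{A-1}.\ref{A-1.nabla_F} gives a uniform dominating bound, so dominated convergence delivers $L^2$-convergence even in the degenerate cases $\TP{\x}\in\{0,\infty\}$. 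The path-integral term is handled the same way: its integrand converges uniformly in $t$ by Lipschitz continuity of $\nabla F$, and the factor $1/(\TP{\x_n}+t)$ multiplied by an integral of size $\le t\,C_{\nabla F}$ stays bounded by $C_{\nabla F}$ uniformly in $n$, so dominated convergence again applies.

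Combining the weak $L^2$-convergence of $\dot\phi_n$ with the strong $L^2$-convergence of $b^{\x_n}(\cdot,\phi_n)$ gives $\dot\phi_n + b^{\x_n}(\cdot,\phi_n) \rightharpoonup \dot\phi + b^{\x}(\cdot,\phi)$ weakly in $L^2$, and lower semicontinuity of the squared $L^2$-norm under weak convergence yields
\begin{equation*}
    I_T^{\x}(\phi) = \tfrac14\,\|\dot\phi + b^{\x}(\cdot,\phi)\|_{L^2}^2 \le \tfrac14\,\liminf_{n\to\infty}\|\dot\phi_n + b^{\x_n}(\cdot,\phi_n)\|_{L^2}^2 = M,
\end{equation*}
which is the desired conclusion. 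The delicate step I expect to be the main obstacle is the strong $L^2$-convergence of the two nonlocal drift summands at the boundary values $\TP{\x}=0$ or $\TP{\x}=\infty$, where the prefactors degenerate pointwise; it is precisely Assumption~\ref{A-1}.\ref{A-1.nabla_F} (boundedness of $\nabla F$) that produces the uniform dominating function allowing the passage to the limit there.
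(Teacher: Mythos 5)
Your proof is correct, but it takes a genuinely different route from the paper's. The paper's argument uses the Young-type inequality $|a+b|^2 \geq (1-c)|a|^2 + (1-\tfrac{1}{c})|b|^2$ to split $4I_T^{\x_n}(\phi_n)$ into a main term $(1-c)\,4I_T^{\x}(\phi_n)$, handled by the lower semicontinuity of $I_T^{\x}$ (already known to be a good rate function via the contraction principle), plus an error term $\mathbf{I}$ shown to vanish via dominated convergence, and then lets $c\to 0$. Your argument is the classical direct-method route: extract a weakly $L^2$-convergent subsequence of $\dot\phi_n$, identify the weak limit with $\dot\phi$, show strong $L^2$-convergence of the drift $b^{\x_n}(\cdot,\phi_n)\to b^{\x}(\cdot,\phi)$, and invoke weak lower semicontinuity of the squared $L^2$-norm. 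Your version is more self-contained (it does not reuse the prior fact that $I_T^{\x}$ is a good rate function) and it quietly sidesteps a small formal wrinkle in the paper's proof, where $\phi_n\in H_1^{\LP{\x_n}}$ is plugged into $I_T^{\x}(\cdot)$, which is literally $+\infty$ when $\LP{\x_n}\neq\LP{\x}$ unless one implicitly translates; your weak-compactness route establishes $\phi\in H_1^{\LP{\x}}$ directly. The price is that you need the uniform bound $\|\dot\phi_n\|_{L^2}\leq C$ and the weak extraction step, whereas the paper's proof avoids any compactness argument in $L^2$. Both proofs use dominated convergence with the bound $|\nabla F|\leq C_{\nabla F}$ in essentially the same spot (handling the degenerate prefactors for $\TP{\x}\in\{0,\infty\}$), and your identification of this as the crux is on target.
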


\begin{proof}
        Without loss of generality, let us assume that all $\phi_n \in H_1^{\LP{\x}}$. Those $\phi_n$ for which it is not true do not influence the $\liminf$ since $I^{\x_n}_T(\phi_n) = \infty$). Then
	\begin{equation}\label{eq:aux:I_x_n_phi_n}
	\begin{aligned}
        4 I_{T}^{\x_n}(\phi_n) &= \int_0^T \big|\dot{\phi_n}(t) + \nabla V(\phi_n(t)) + \frac{\TP{\x_n}}{\TP{\x_n} + t}\int_{\R^d} \nabla F(\phi_n(t) - u) \OM{\x_n}(\dd{u})\\
        &\quad + \frac{1}{\TP{\x_n} + t}\int_0^t \nabla F(\phi_n(t) - \phi_n(u))\dd{u}\big|^2\dd{t}.
    \end{aligned}
	\end{equation}
	Let us add and subtract two terms of the form $A:= \frac{\TP{\x}}{\TP{\x} + t}\int_{\R^d} \nabla F(\phi_n(t) - u) \OM{\x}(\dd{u})$ and $B:= \frac{1}{\TP{\x} + t}\int_0^t \nabla F(\phi_n(t) - \phi_n(u))\dd{u}$. Note that for any $c \in (0, 1)$ and for any $a, b \in \R^d$ the following inequality holds: $|a + b|^2 \geq (1 - c) |a|^2 + (1 - \frac{1}{c}) |b|^2$. Using two previous statements, let us split \eqref{eq:aux:I_x_n_phi_n} into two parts, one of which does not depend on $\x_n$, but depends on $\x$ instead. We then have
	\begin{equation*}
	\begin{aligned}
		4 I_{T}^{\x_n}(\phi_n) &\geq (1 - c) 4 I_T^{\x}(\phi_n) + \left(1 - \frac{1}{c}\right) \int_0^T\Big|\frac{\TP{\x_n}}{\TP{\x_n} + t}\int_{\R^d} \nabla F(\phi_n(t) - u) \OM{\x_n}(\dd{u}) - A\\
		&\quad  + \frac{1}{\TP{\x_n} + t}\int_0^t \nabla F(\phi_n(t) - \phi_n(u))\dd{u} - B\Big|^2 \dd{t} \\
		& = : (1 - c)4 I_T^{\x}(\phi_n) + \left(1 - \frac{1}{c} \right)\textbf{I}.
	\end{aligned}	
	\end{equation*}
    Since $I_T^{\x}$ is a good rate function, by taking $\liminf$ from both sides of the inequality above we get
	\begin{equation}\label{eq:aux:I_x_n_phi_n_2}
	   \liminf_{n \to \infty} I_{T}^{\x_n}(\phi_n) \geq (1 - c) I_T^{\x}(\phi) + \frac{1}{4}\left(1 - \frac{1}{c}\right) \liminf_{n \to \infty} \textbf{I},
	\end{equation}
	for all $c \in (0, 1)$. Therefore, in order to prove the lemma, it is enough to show that $\liminf_{n \to \infty} \textbf{I} = 0$. Then we could take the limit $c \to 0$ from both sides of \eqref{eq:aux:I_x_n_phi_n_2} and obtain the necessary result.
	
	Let us consider $\textbf{I}$. We use inequality of the form $|a + b|^2 \leq 2|a|^2 + 2|b|^2$ for $\textbf{I}$ and get an upper bound of the form. $\textbf{I}$ is less or equal than:
	\begin{equation*}
	\begin{aligned} 
        &2\int_0^T \Big| \frac{\TP{\x_n}}{\TP{\x_n} + t}\int_{\R^d} \nabla F(\phi_n(t) - u) \OM{\x_n}(\dd{u}) - \frac{\TP{\x}}{\TP{\x} + t}\int_{\R^d} \nabla F(\phi_n(t) - u) \OM{\x}(\dd{u})  \Big|^2\dd{t} \\
		& + 2\int_0^T \Big|\frac{1}{\TP{\x_n} + t}\int_0^t \nabla F(\phi_n(t) - \phi_n(u))\dd{u} - \frac{1}{\TP{\x} + t}\int_0^t \nabla F(\phi_n(t) - \phi_n(u))\dd{u} \Big|^2\dd{t} \\
		&=: 2\text{I}\text{I}_1 + 2\text{I}\text{I}_2.
	\end{aligned}
	\end{equation*}

    Let us apply dominated convergence theorem to both $\text{I}\text{I}_1$ and $\text{I}\text{I}_2$. In order to do so, we prove that integrands are uniformly bounded.
    
    First, consider $\text{I}\text{I}_1$. Of course, fractions $\frac{\TP{\x_n}}{\TP{\x_n} + t}$ and $\frac{\TP{\x}}{\TP{\x} + t}$ are bounded by $1$. As for $ \int_{\R^d} |\nabla F(\phi_n(t) - u)| \OM{\x_n}(\dd{u})$, we introduce the following decomposition
    \begin{equation} \label{eq:aux:nabla_F(phi_n)}
    \begin{aligned}
        \int_{\R^d} &|\nabla F(\phi_n(t) - u)|  \OM{\x_n}(\dd{u}) \\
        &= \int_{\R^d} |\nabla F(\phi_n(t) - u) - \nabla F(\phi(t) - u) + \nabla F(\phi(t) - u)| \OM{\x_n}(\dd{u}) \\
        & \leq \text{Lip}_{\nabla F} |\phi_n(t) - \phi(t)| + \int_{\R^d} |\nabla F(\phi(t) - u) |\OM{\x_n} \\
        & \leq \text{Lip}_{\nabla F} \|\phi_n - \phi\|_{\infty} + C_{\nabla F} \left( \max_{t \in [0, T]} |\phi(t)| + \left(\int_{\R^d} |u|^2 \OM{\x_n}\right)^{1/2} \right),
    \end{aligned}
    \end{equation}
    and since $\int |u|^2 \OM{\x_n} \xrightarrow[n \to \infty]{} \int |u|^2 \OM{\x} $ (see \cite[Theorem~6.9]{villani2008optimal}), the integral $ \int_{\R^d} |\nabla F(\phi_n(t) - u)| \OM{\x_n}(\dd{u})$ and, as a consequence, $ \int_{\R^d} |\nabla F(\phi_n(t) - u)| \OM{\x}(\dd{u})$ are uniformly (in $t$) bounded.
    
    For $\text{I}\text{I}_2$, we also use lipschitzness of $\nabla F$ when needed, convergence of $\phi_n$ in uniform norm topology towards $\phi$, as well as bounds of the form $|\phi(t)| \leq \max_{t \in [0, T]} |\phi(t)|$. That easily gives us uniform (in $t$) boundedness of the integrand of the integral $\text{I}\text{I}_2$.
    
    Thus, to calculate $\lim_{n \to \infty} \textbf{I}$, we can use the dominated convergence theorem and pass the limit inside both of the integrals $\text{I}\text{I}_1$ and $\text{I}\text{I}_2$.
    
    Since $\nabla F$ is continuous and since the uniform boundedness in $t$ and in $u$ on finite time interval $[0, T]$ of $\nabla F(\phi_n(t) - \phi_n(u))$ can be easily established, $\lim_{n \to \infty} \text{I}\text{I}_2$ is clearly equal to $0$. Limits of components of $\text{I}\text{I}_1$ are also obvious, except for $\int_{\R^d} |\nabla F(\phi_n(t) - u)| \OM{\x_n}(\dd{u})$ that needs some attention. Let us show that expression $\int_{\R^d} |\nabla F(\phi(t) - u)| \OM{\x}(\dd{u})$ is actually its limit with $n \to \infty$. Similarly to computations in \eqref{eq:aux:nabla_F(phi_n)}, we get 
    \begin{equation*}
    \begin{aligned}
        &\Big| \int_{\R^d} \nabla F(\phi_n(t) - u)  \OM{\x_n}(\dd{u}) - \int_{\R^d} \nabla F(\phi(t) - u)  \OM{\x}(\dd{u})\Big| \\
        & \qquad \leq \Big| \int_{\R^d} \nabla F(\phi(t) - u)  \OM{\x_n}(\dd{u}) - \int_{\R^d} \nabla F(\phi(t) - u)  \OM{\x}(\dd{u})\Big| + \text{Lip}_{\nabla F} \|\phi_n - \phi\|_{\infty}.
    \end{aligned}
    \end{equation*}
    
    As was pointed out before, convergence of measures in Wasserstein distance gives convergence of respective integrals, since $\nabla F$ is Lipschitz continuous \cite[Theorem 6.9]{villani2008optimal}. 
    
    This is the last remark needed to observe that $\lim_{n \to \infty} \textbf{I} = 0$. Thus, the lemma is proved by \eqref{eq:aux:I_x_n_phi_n_2}. 
\end{proof}

As was pointed out before, lower semicontinuity guarantees that infima of a function are achieved over compact sets. We summarise this property by the following corollary.

\begin{corollary}\label{cor:inf_I_over_compact}
    For any $T > 0$, any compact set $\Phi \subset C([0, T]; \R^d)$, and any compact set $\mathcal{C} \subset \mathfrak{X}$ there exist $\phi^* \in \Phi$ and $\x^* \in \mathcal{C}$ such that
    \begin{equation*}
        \inf_{\x \in \mathcal{C}} \inf_{\phi \in \Phi} I^{\x}_T(\phi) = I^{\x^*}_T(\phi^*).
    \end{equation*}
\end{corollary}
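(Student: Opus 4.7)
The plan is to apply the classical fact that a lower semicontinuous function on a compact space attains its infimum, using Lemma~\ref{lm:lower_semicont_I} as the required (joint) lower semicontinuity and the fact that $\mathcal{C} \times \Phi$ is compact in the product topology.

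First, I would handle the trivial case where $\inf_{\x \in \mathcal{C}} \inf_{\phi \in \Phi} I_T^{\x}(\phi) = +\infty$ by choosing any $\x^* \in \mathcal{C}$ and $\phi^* \in \Phi$. Otherwise, let $m := \inf_{\x \in \mathcal{C}} \inf_{\phi \in \Phi} I_T^{\x}(\phi) < \infty$ and pick a minimizing sequence $\{(\x_n, \phi_n)\}_{n \geq 1} \subset \mathcal{C} \times \Phi$ such that $I_T^{\x_n}(\phi_n) \to m$ as $n \to \infty$.

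Next, since $\mathcal{C}$ is compact in $(\mathfrak{X}, d_{\mathfrak{X}})$ and $\Phi$ is compact in $C([0,T]; \R^d)$ (equipped with the uniform norm), by passing to a subsequence (which I still denote by $n$) one can extract limits $\x_n \xrightarrow[n\to\infty]{d_{\mathfrak{X}}} \x^* \in \mathcal{C}$ and $\phi_n \xrightarrow[n\to\infty]{\|\cdot\|_\infty} \phi^* \in \Phi$. Applying Lemma~\ref{lm:lower_semicont_I} to these convergent sequences gives
\begin{equation*}
    I_T^{\x^*}(\phi^*) \leq \liminf_{n \to \infty} I_T^{\x_n}(\phi_n) = m.
\end{equation*}
On the other hand, since $(\x^*, \phi^*) \in \mathcal{C} \times \Phi$, the definition of the infimum forces $I_T^{\x^*}(\phi^*) \geq m$, hence equality, which is the claim.

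There is essentially no obstacle here: all the analytic work has already been done in Lemma~\ref{lm:lower_semicont_I}, and the remaining argument is the standard Weierstrass-type extraction on a compact metric space. The only mild point to keep track of is that the topology on $\mathcal{C} \times \Phi$ used in the lower semicontinuity lemma is the product of $d_{\mathfrak{X}}$ and uniform convergence, which is precisely the product topology under which $\mathcal{C} \times \Phi$ is compact; this matches the hypotheses of Lemma~\ref{lm:lower_semicont_I} exactly.
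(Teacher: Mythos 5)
Your proof is correct and is exactly the argument the paper has in mind: the corollary is stated immediately after Lemma~\ref{lm:lower_semicont_I} with the remark that lower semicontinuity guarantees infima are attained on compact sets, and the paper leaves the standard Weierstrass extraction implicit. Your write-up simply fills in those routine details (minimizing sequence, diagonal extraction on the compact product, joint lower semicontinuity from the lemma) without deviating from the intended route.
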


\subsection{Compactness results}

In this paper, compact subsets of $\mathfrak{X}\times C([0,T]; \R^d)$ of particular form are considered. Let us present two results about compactness of some sets that will be used later in the proof. The results of this section hold under Assumptions~\ref{A-1}.

\begin{lemma}\label{lm:K_is_compact}
    For any $T > 0$ and for any compact subsets $C_1, C_2 \subset \R^d$ the following set $$\mathcal{C} := \{\x \in \mathfrak{X}: T \leq \TP{\x} \leq \infty, \; \LP{\x} \in C_1, \; \OM{\x} \in \mathcal{P}_2(C_2)\}.$$ is a compact subset of $\mathfrak{X}$. 
\end{lemma}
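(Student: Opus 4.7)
The plan is to exploit the product structure of $\mathfrak{X}$: since $d_{\mathfrak{X}}$ is the max of the three component metrics, it suffices to show that $\mathcal{C}$ is contained in a product of compact sets, namely $[T,\infty] \times \mathcal{P}_2(C_2) \times C_1$, and that each of these three factors is compact in its respective metric. Equivalently, given any sequence $\{\x_n\}_{n\ge 1} \subset \mathcal{C}$, I will extract a convergent subsequence coordinatewise.

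For the first coordinate, $\{\TP{\x_n}\} \subset [T,\infty]$, which is a closed subset of the compact metric space $([0,\infty],d_{[0,\infty]})$, so a convergent subsequence with limit $t^* \in [T,\infty]$ exists. For the third coordinate, $\{\LP{\x_n}\} \subset C_1$ is a sequence in a compact subset of $\R^d$, yielding a further subsequence converging to some $x^* \in C_1$. Both of these steps are essentially immediate.

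The main step is the second coordinate: showing that $\mathcal{P}_2(C_2)$ is compact in $\mathbb{W}_2$. Since $C_2$ is compact, the family $\{\OM{\x_n}\}$ is automatically tight (every $\OM{\x_n}$ is supported on the same compact set $C_2$), so Prokhorov's theorem produces a subsequence converging weakly to some probability measure $\mu^*$. The Portmanteau theorem applied to the closed set $C_2$ gives $\mu^*(C_2) \ge \limsup_n \OM{\x_n}(C_2) = 1$, so $\mu^* \in \mathcal{P}_2(C_2)$. To upgrade weak convergence to $\mathbb{W}_2$ convergence, I will use the characterization (Villani, Theorem 6.9): $\mathbb{W}_2$ convergence is equivalent to weak convergence plus convergence of second moments. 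Since $|u|^2$ is continuous and bounded on $C_2$, $\int|u|^2\,\OM{\x_n}(\dd u) \to \int|u|^2\,\mu^*(\dd u)$ follows from weak convergence directly, so $\OM{\x_n} \to \mu^*$ in $\mathbb{W}_2$.

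Finally, combining the three subsequences, I obtain a subsequence $\{\x_{n_k}\}$ with $\TP{\x_{n_k}} \to t^*$, $\OM{\x_{n_k}} \to \mu^*$, $\LP{\x_{n_k}} \to x^*$, so by definition of $d_{\mathfrak{X}}$ as the maximum, $\x_{n_k} \to \x^* := (t^*, \mu^*, x^*)$ in $\mathfrak{X}$, and $\x^* \in \mathcal{C}$. This proves $\mathcal{C}$ is sequentially compact, hence compact since $\mathfrak{X}$ is a metric space. The only place requiring care is the measure coordinate; everything else is routine compactness in standard spaces.
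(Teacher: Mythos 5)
Your proof is correct and follows essentially the same route as the paper: the first and third coordinates are handled by elementary compactness, and the measure coordinate is handled via Prokhorov's theorem together with Villani's Theorem 6.9 to pass from weak convergence to $\mathbb{W}_2$ convergence. You spell out a few details the paper leaves implicit (tightness from the common compact support, closedness via Portmanteau, and the boundedness of $|u|^2$ on $C_2$ needed to upgrade the convergence of second moments), but the underlying argument is the same.
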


\begin{proof}
    Projections of $\mathcal{C}$ on the first two axes are obviously compact subsets in $[0, \infty]$ and $\R^d$. For compactness of the projection on the third axis, note that by Prokhorov's theorem this set is compact in weak topology which is metrizable by Wasserstein-2 distance (see \cite[Theorem~6.9]{villani2008optimal}).
\end{proof}

\begin{lemma}\label{lm:A_x_compact}
    Let $\mathcal{C}$ be a compact subset of $\mathfrak{X}$ such that $\inf\{\TP{\x} : \x \in \mathcal{C}\} > 0$. For any $\x \in \mathcal{C}$ define $\Phi^\x := \{\phi \in C([0, T]; \R^d): I^\x_T(\phi) \leq 1\}$. Then $$\Phi = \bigcup_{\x \in \mathcal{C}} \Phi^\x$$ is a relatively compact subset of $C([0, T]; \R^d)$.
\end{lemma}

\begin{proof}
    In the case of complete metric spaces, the notion of relative compactness is equivalent to totally boundedness. By definition, the set $\Phi$ is totally bounded if for any $\varepsilon > 0$ there exists a finite cover of $\Phi$ with open balls of radius $\varepsilon$. The strategy of the proof is the following. We prove that for any $\varepsilon > 0$ and any $\x \in \mathcal{C}$ there exists $\delta_\x^\varepsilon > 0$ small enough such that $\bigcup_{\mathbf{y} \in \mathbb{B}_{\delta_\x^\varepsilon}(\x)} \Phi^{\mathbf{y}}$ is covered by a finite number of balls of radius $\varepsilon$. Since $\mathcal{C}$ is itself a compact subset of $\mathfrak{X}$ and $\{\mathbb{B}_{\delta_{\x}^\varepsilon}(\x)\}_{\x \in \mathcal{C}}$ is its cover by open sets, we can extract finite subcover $\{\mathbb{B}_{\delta_{\x_i}^\varepsilon}(\x_i)\}_1^n$ and thus prove the lemma, since $\Phi \subset \bigcup_{i = 1, \dots, n} \bigcup_{\mathbf{y} \in \mathbb{B}_{\delta_{\x_i}^\varepsilon}(\x_i)} \Phi^{\mathbf{y}}$ is covered by a finite number of open balls of radius $\varepsilon$. 

    \begin{figure}[b]
        \centering
        
        \begin{tikzpicture}
            \draw[thick] (-1.6, 0) ellipse (1 and 2);
            \draw[thick] (2.6, 0) ellipse (1.4 and 2);

            \draw (-1.6, -2.25) node {$\mathcal{C}$};
            \draw (2.6, -2.25) node {$\Phi$};

            \fill (-1.6, -0.4)  circle[radius=1pt];
            \draw (-1.45, -0.55) node {$\x$};
            \draw[color=blue] (-1.6, -0.4) circle[radius=0.6];
            \draw[color=blue] (-1.85, 0.45) node {$\mathbb{B}_{\delta_\x}(\x)$};

            \draw[rotate=25] (2.8, -0.4) ellipse (0.3 and 0.8);
            \draw (2.4, 0.2) node {$\Phi^\x$};

            \draw[->] (-1.6, -0.4) -- (2.6, 0.6);

            \draw[color=blue] (2.62, 0.5) ellipse (0.57 and 1.3);

            \draw[color=blue] (2.61, -1) node {$\bigcup_{\mathbf{y} \in \mathbb{B}_{\delta_\x}(\x)} \Phi^{\mathbf{y}}$};

            \draw[dashed, color=blue] (-1.6, -0.4 + 0.6) .. controls (0, 0.5) .. (2.62, 0.5 + 1.3);
            \draw[dashed, color=blue] (-1.6, -0.4 - 0.6) .. controls (0, -0.5) .. (2.62, 0.5 - 1.3);
            
        \end{tikzpicture}        
        
        \caption{Illustration of the relationship between some objects used in the proof, namely $\x$, $\Phi^\x$, $\mathbb{B}_{\delta_\x}(\x)$, and $\bigcup_{\mathbf{y} \in \mathbb{B}_{\delta_\x}(\x)} \Phi^{\mathbf{y}}$.}
        \label{fig:Phi_proof}
    \end{figure}
    
    As a result, all we have to prove is that for any $\varepsilon > 0$ and any $\x \in \mathcal{C}$ there exists $\delta_\x^\varepsilon > 0$ small enough such that $\bigcup_{\mathbf{y} \in \mathbb{B}_{\delta_\x^\varepsilon}(\x)} \Phi^{\mathbf{y}}$ is a totally bounded subset of $C([0, T]; \R^d)$.
    
    First of all, for any function $\phi \in \bigcup_{\mathbf{y} \in \mathbb{B}_{\delta_\x^\varepsilon}(\x)} \Phi^{\mathbf{y}}$ if $\delta_{\x}^\varepsilon$ is small enough, the following integral is bounded by a positive constant $C_1$: 
    \begin{equation*}
        \int_0^T |\dot{\phi}(s)|^2 \dd{s} \leq C_1.
    \end{equation*}
    
    Indeed, by adding and subtracting $\nabla V(\phi(s)) + \frac{\TP{\mathbf{y}}}{\TP{\mathbf{y}} + s} \nabla F * \OM{\mathbf{y}}(\phi(s)) + \\ \frac{1}{\TP{\mathbf{y}} + s}\int_0^s \nabla F(\phi(s) - \phi(z))\dd{z}$ inside the absolute value, we get: 
    
    \begin{equation*}
    \begin{aligned}
        \int_0^t |\dot{\phi}(s)|^2 \dd{s} & \leq 4 \int_{0}^{t} \Big|\dot{\phi}(s) + \nabla V(\phi(s)) + \frac{\TP{\mathbf{y}}}{\TP{\mathbf{y}} + s} \nabla F * \OM{\mathbf{y}}(\phi(s)) \\
		& \quad + \frac{1}{\TP{\mathbf{y}} + s}\int_0^s \nabla F(\phi(s) - \phi(z))\dd{z} \Big|^2\dd{s} + 4 \int_0^t |\nabla V(\phi(s))|^2\dd{s} \\
		& \quad + 4\int_0^t \left|\frac{\TP{\mathbf{y}}}{\TP{\mathbf{y}} + s} \nabla F * \OM{\mathbf{y}}(\phi(s)) \right|^2 \dd{s} \\
		& \quad + 4\int_0^t \left|\frac{1}{\TP{\mathbf{y}} + s}\int_0^s \nabla F(\phi(s) - \phi(z))\dd{z}\right|^2 \dd{s}.
    \end{aligned}
    \end{equation*}
    
    Let $\text{C} > 0$ be a generic positive constant. Since $I_{T}^{\mathbf{y}}(\phi) \leq 1$ for all $\mathbf{y} \in \mathbb{B}_{\delta_\x^\varepsilon}(\x)$ and using Lipschitz continuity of $\nabla V$ and $\nabla F$ (Assumption~\ref{A-1}.\ref{A-1.Lip}), we get
    \begin{equation*}
        \begin{aligned}
        \int_0^t &|\dot{\phi}(s)|^2 \dd{s}  \leq 4 + 8 \text{Lip}^2_{\nabla V} \!\int_0^t |\phi(s)|^2 \dd{s} + 8T|\nabla V(0)|^2 \\
		& + 4 \text{Lip}_{\nabla F}^2 \int_0^t \!\!\int_{\R^d} |\phi(s) - z|^2 \OM{\mathbf{y}}(\dd{z})\dd{s} + \frac{4 t \text{Lip}^2_{\nabla F}  }{\TP{\mathbf{y}}^2} \int_0^t \!\! \int_0^s |\phi(s) - \phi(z)|^2 \dd{z} \dd{s} \\
		& \qquad\qquad \leq  \text{C}  + \text{C}\left(1  + \frac{\text{C}}{\TP{\mathbf{y}}^2} \right) \int_0^t |\phi(s)|^2 \dd{s} + \text{C} \int_{\R^d} |z|^2 \OM{\mathbf{y}}(\dd{z}).
        \end{aligned}
    \end{equation*}

    Since all $\mathbf{y}$ belong to a ball of radius $\delta_{\x}^\varepsilon$ of $\x$, then the following inequalities hold: $|\TP{\mathbf{y}} - \TP{\x}| \leq \delta_{\x}^\varepsilon$, $\left|\int |z|^2 \OM{\mathbf{y}}(\dd{z}) - \int |z|^2 \OM{\x}(\dd{z}) \right| \leq \delta_{\x}^\varepsilon$ and $|\LP{\mathbf{y}} - \LP{\x}| \leq \delta_{\x}^\varepsilon$. Moreover, without loss of generality, we can choose $\delta_\x^\varepsilon$ to be small enough such that $\TP \mathbf{y} > \TP\x - \delta_\x^\varepsilon > 0$. 
    
    In addition, since $\phi \in H_1^{\LP\x}$ and by Hölder's inequality, we can bound $\int_0^t |\phi(s)|^2 \dd{s} \leq 2T \big(|\LP{\x}|^2 + \int_0^t\int_0^s|\dot{\phi}(u)|^2\dd{u}\dd{s} \big)$. As a result, there exist constants $C_2, C_3 > 0$ such that
    \begin{equation*}
        \int_0^t |\dot{\phi}(s)|^2 \dd{s} \leq C_2 + C_3 \int_0^t \!\! \int_0^s |\dot{\phi}(u)|^2 \dd{u} \dd{s}.
    \end{equation*}
    So, by Grönwall's inequality, we get
    \begin{equation}\label{eq:aux:phi_s}
        \int_0^T |\dot{\phi}(s)|^2 \dd{s} \leq C_2 e^{C_3 T} =: C_1.
    \end{equation}
    
    Finally, we use the Arzelà–Ascoli theorem to prove that the set $\bigcup_{\mathbf{y} \in \mathbb{B}_{\delta_\x^\varepsilon}(\x)} \Phi^{\mathbf{y}}$ is totally bounded. We use the bound \eqref{eq:aux:phi_s} and get the following inequalities showing uniform equicontinuity and pointwise boundedness of functions that belong to $\bigcup_{\mathbf{y} \in \mathbb{B}_{\delta_\x^\varepsilon}(\x)} \Phi^{\mathbf{y}}$. Namely, for any $0 \leq t_1 \leq t_2 \leq T$, we have
    \begin{equation*}
        |\phi(t_2) - \phi(t_1)|^2 \leq (t_2 - t_1) \int_{t_1}^{t_2} \left| \dot{\phi}(s) \right|^2 \dd{s} \leq C_1(t_2 - t_1).
    \end{equation*}
	This completes the proof.
    
\end{proof}

\section{Proof of the main theorem}\label{s:Exit-time}



In this section, we prove the main result of the paper, that is Theorem~\ref{th:main_th}, under Assumptions~\ref{A-1} and \ref{A-2}. Let us first recall that, by the definition of $a$, $X^0_t \xrightarrow[t \to \infty]{} a$. Assumption \ref{A-2} states that the domain $G$ is a regular enough neighbourhood of the point $a$ and a subset of its basin of attraction for the flow generated by $-\nabla U_a$. Since, as it was proved in Theorem \ref{th:LDP_gen}, the process $X^\sigma$ satisfies the LDP for any finite time interval $[0, T]$, we expect it to be close to its deterministic limit and also to converge towards $a$ with high probability. As a consequence, we also expect $\bmu_t^\sigma$ to converge towards $\delta_a$ in a finite time (this is shown in Lemma \ref{lm:conv_to_delta_a}). Assumption \ref{A-2}.\ref{A-2.Effect_pot_conv} ensures that for $\x \in \mathfrak{X}$ such that $\OM{\x}$ is close to $\delta_a$, starting from any $\LP{\x} \in G$, deterministic process $\X^{0}$ will be still attracted to $a$. Assumption \ref{A-2}.\ref{A-2.Strong_attraction_a} elaborates on this idea and suggests that the attracting forces around $a$ will be stronger than interaction forces. These facts combined produce Lemma \ref{lm:X0_conv}. These effects also suggest that $\bmu^\sigma$ for small $\sigma$ should stay close to $\delta_a$ with high probability at least until exit-time, since inside $G$ there will always be a force that pushes $X^\sigma$ towards $a$. That is shown in Lemma \ref{lm:gamma_gr_tau}.


In this section, we first state auxiliary Lemmas \ref{lm:conv_to_delta_a}-\ref{lm:unif_lower_bound} before proving the main theorem in Section~\ref{s:main_proof}. Finally, in the Section~\ref{s:aux_proofs}, we provide the proofs of the auxiliary lemmas.

\subsection{Auxiliary results}\label{s:aux_results}


\subsubsection{Initial descent to the point of attraction \texorpdfstring{$a$}{a}}
The first result that we will use for proving the main theorem describes the convergence of the solution to the system \eqref{eq:SID_main_sys} with vanishing noise towards the point $a$ in a constant time. Moreover, we show that its occupation measure also converges towards $\delta_a$ at some constant time with high probability for small $\sigma$. As a matter of fact, as will be shown in Lemma \ref{lm:gamma_gr_tau}, with $\sigma \to 0$, not only $\mu^\sigma$ converges towards $\delta_a$ in some constant time, but it also stays in the neighbourhood of $\delta_a$ at least until exit-time of $X^\sigma$ from the domain $G$. We call this ``effect of stabilization of the occupation measure''.

Consider the following lemma:

\begin{lemma}\label{lm:conv_to_delta_a}
    There exists $\rhobar > 0$ such that for any $0 < \rho < \rhobar$ there exists a finite time $T_{\text{st}}^\rho > 0$ such that for any $t \geq T_{\text{st}}^\rho$ we have:
    \begin{equation*}
        \lim_{\sigma \to 0}\Prob_{x_0} \left( X^\sigma_{t} \notin B_{\rho}(a) \right) = 0
    \end{equation*}
    and 
    \begin{equation*}
        \lim_{\sigma \to 0}\Prob_{x_0} \left(\mu^\sigma_{t} \notin \mathbb{B}_{\rho}(\delta_a)  \right) = 0.
    \end{equation*}
    Moreover, we have:
    \begin{equation*}
        \lim_{\sigma \to 0} \Prob_{x_0} (\tau_G^\sigma \leq t) = 0.
    \end{equation*}
\end{lemma}

\subsubsection{Descent of the deterministic process towards \texorpdfstring{$a$}{a}}
The following lemma claims that the deterministic process $\X^{0}$, with some suitable initial conditions, descends inside a small ball around $a$ at most in some finite time that depends on how close the process should get to the point $a$. This result is shown for any starting point $\LP\x \in G$, any $\OM\x$ close enough to $\delta_a$ ($\OM\x \in \mathbb{B}_{(1 + 2\varepsilon)\rho}(\delta_a)$), and any $\TP{\x}$ big enough ($\TP{\x} \geq T_{\text{st}}^\rho$). We also show that the occupation measure $\bmu^0$ will not move far from $\delta_a$ in the process of convergence.

Let $\X^{0}$ be the solution of \eqref{eq:SID_main_sys_gen} with $\sigma=0$ and define
\begin{equation}\label{eq:def:set_C_1}
    \mathcal{C}^1_{\varepsilon, \rho} := \{\x \in \mathfrak{X}: T_{\text{st}}^\rho \leq  \TP{\x} \leq \infty ,\; \OM{\x} \in \mathbb{B}_{(1 + 2\varepsilon)\rho}(\delta_a) \text{, and } \LP{\x} \in G\}.    
\end{equation}

\begin{lemma}\label{lm:X0_conv}
    There exist $\rhobar, \epsilonbar, \thetabar > 0$ such that for any $0 < \rho < \rhobar$ there exists a finite time $T^\rho_1 > 0$ such that for any $0 < \varepsilon < \epsilonbar$, for any $0 < \vartheta < \thetabar$ and for any $\x \in \mathcal{C}^1_{\varepsilon, \rho}$ the following estimations hold:
    \begin{enumerate}
        \item $\X^{0}_s \in G$ and $\bmu_s^{0} \in \mathbb{B}_{(1 + 3\varepsilon)\rho}(\delta_a)$ for any $0 \leq s < T_1^\rho$.
        \item $\X^{0}_s \in B_{(1 - \vartheta )\rho}(a)$ and $\bmu_s^{0} \in \mathbb{B}_{(1 + 2\varepsilon)\rho}(\delta_a)$ for any $s \geq T_1^\rho$.
    \end{enumerate}
\end{lemma}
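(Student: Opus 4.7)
The plan is to split the time horizon into a descent phase $[0,T_0]$, on which $X^{\x,0}$ stays close to the ``effective'' flow $\phi^{\LP{\x}}$ driven by $-\nabla V(\cdot)-\nabla F(\cdot-a)$, and a stabilization phase $[T_0,\infty)$, on which the strong local attraction near $a$ (Assumption~\ref{A-2}.\ref{A-2.Strong_attraction_a}) produces exponential contraction. The central tool is the identity
\begin{equation*}
\mathbb{W}_2(\mu_s^{\x,0},\delta_a)^2 = \frac{\TP{\x}\,\mathbb{W}_2(\OM{\x},\delta_a)^2 + \int_0^s |X_u^{\x,0}-a|^2\,du}{\TP{\x}+s},
\end{equation*}
obtained from $\mathbb{W}_2(\nu,\delta_a)^2=\int|y-a|^2\,\nu(dy)$: every estimate on $\mu^{\x,0}$ is thus reduced to an integral estimate on $|X^{\x,0}_u-a|^2$.

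For the descent phase I would invoke Assumption~\ref{A-2}.\ref{A-2.Effect_pot_conv}: for every $x\in\overline{G}$, $\phi^x_t\to a$ as $t\to\infty$. Continuous dependence on initial data for the Lipschitz ODE together with compactness of $\overline{G}$ upgrades this to uniform convergence, yielding $T_0=T_0(\rho)$ with $\phi^x_{T_0}\in B_\eta(a)$ for all $x\in\overline{G}$ and a suitably small $\eta\ll\rho$. To transfer this to $X^{\x,0}$ I will use the Lipschitz estimate $|\nabla F*\mu_s(x)-\nabla F(x-a)| \le \text{Lip}_{\nabla F}\,\mathbb{W}_2(\mu_s,\delta_a)$ (since $\mathbb{W}_1\le\mathbb{W}_2$) and Grönwall on $X^{\x,0}_s-\phi^{\LP{\x}}_s$, producing $|X^{\x,0}_s-\phi^{\LP{\x}}_s| \le C(T_0)\sup_{u\le s}\mathbb{W}_2(\mu_u^{\x,0},\delta_a)$ on $[0,T_0]$. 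A bootstrap on the first time $\mathbb{W}_2(\mu_s^{\x,0},\delta_a)$ leaves $[0,(1+2\varepsilon)\rho]$, together with the crude bound $|X_u-a|\le R:=\sup_{y\in\overline{G}}|y-a|$ plugged into the identity, then forces that time to exceed $T_1^\rho$ provided $T_{\text{st}}^\rho$ is chosen large enough; as a consequence $|X_{T_0}^{\x,0}-a|\le \eta + C(T_0)\rho \le \Delta_x/2$ for $\rho$ small.

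On the stabilization phase, Assumption~\ref{A-2}.\ref{A-2.Strong_attraction_a} applies as long as $\mu_s^{\x,0}\in\mathbb{B}_{\Delta_\mu}(\delta_a)$ and $X_s^{\x,0}\in B_{\Delta_x}(a)$, yielding $\tfrac{d}{ds}|X_s^{\x,0}-a|^2\le -2K|X_s^{\x,0}-a|^2$ and hence $|X_s^{\x,0}-a|\le|X_{T_0}^{\x,0}-a|e^{-K(s-T_0)}$; in particular $|X_s^{\x,0}-a|\le(1-\vartheta)\rho$ once $s-T_0\ge K^{-1}\log((\eta+C(T_0)\rho)/((1-\vartheta)\rho))$. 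The exponential decay also bounds $\int_0^\infty|X_u^{\x,0}-a|^2\,du\le T_0 R^2 + (\Delta_x/2)^2/(2K)=:C_\ast$, a constant depending only on $\rho$. Inserting this into the identity yields, when $\TP{\x}<\infty$,
\begin{equation*}
\mathbb{W}_2(\mu_s^{\x,0},\delta_a)^2 \le \frac{\TP{\x}((1+\varepsilon)\rho)^2 + C_\ast}{\TP{\x}+s},
\end{equation*}
which drops below $((1+\varepsilon)\rho)^2$ as soon as $s\ge C_\ast/((1+\varepsilon)\rho)^2$, a bound uniform in $\TP{\x}\ge T_{\text{st}}^\rho$ and in $\varepsilon\ge 0$; the case $\TP{\x}=\infty$ is trivial because $\mu_s^{\x,0}=\OM{\x}$ is static. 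Taking $T_1^\rho$ to be the maximum of the contraction time and $2C_\ast/\rho^2$ then completes the argument.

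The principal obstacle is the \emph{compatible} choice of the time scales $T_0,T_1^\rho,T_{\text{st}}^\rho$ and smallness parameters $\rho,\varepsilon,\vartheta,\eta$: controlling $\mu^{\x,0}$ during the descent phase by $(1+2\varepsilon)\rho$ requires $T_{\text{st}}^\rho$ to be large enough that the early excursion of $|X_u-a|^2$ is absorbed by the large denominator $\TP{\x}+s$, while the tightening for $s\ge T_1^\rho$ relies crucially on the \emph{bounded} integral $\int_0^\infty|X_u-a|^2\,du\le C_\ast$ so that $T_1^\rho$ can be made independent of $\varepsilon$ and $\vartheta$. A subsidiary technical point will be verifying $X^{\x,0}_s\in G$ throughout $[0,T_1^\rho]$; this follows from the $O(\rho)$ perturbation estimate together with the fact that the effective potential $V+F(\cdot-a)$ decreases along $\phi^{\LP{\x}}$, which keeps the unperturbed trajectory (and hence, by continuity, the perturbed one) inside the effective domain of attraction.
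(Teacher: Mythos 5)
Your argument follows the same three-phase template as the paper's proof (descent along the effective flow $\phi^{\LP{\x}}$ into $B_{\Delta_x}(a)$ via Gr\"{o}nwall; exponential contraction from Assumption~\ref{A-2}.\ref{A-2.Strong_attraction_a}; stabilization of the occupation measure), but the central technical tool differs in a way that matters. Your exact identity $\mathbb{W}_2(\mu_s^{\x,0},\delta_a)^2 = \bigl(\TP{\x}\,\mathbb{W}_2(\OM{\x},\delta_a)^2 + \int_0^s |X_u^{\x,0}-a|^2\,\dd{u}\bigr)/(\TP{\x}+s)$ is valid because the only coupling against a Dirac mass is the product one. The paper instead invokes the linear convex-combination bound $\mathbb{W}_2((1-\lambda)\mu_1+\lambda\mu_2,\delta_a)\le(1-\lambda)\mathbb{W}_2(\mu_1,\delta_a)+\lambda\mathbb{W}_2(\mu_2,\delta_a)$, which is in fact reversed for $\mathbb{W}_2$: by your identity the left-hand side equals $\sqrt{(1-\lambda)a_1^2+\lambda a_2^2}\ge(1-\lambda)a_1+\lambda a_2$ by Jensen. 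So your route is the rigorous one (the lemma's conclusion still holds in the paper because the error can be absorbed by a larger $T_{\text{st}}^\rho$, but the displayed inequality there is false). Your approach also pays off structurally: the single constant $C_\ast$ dominating $\int_0^\infty|X_u^{\x,0}-a|^2\,\dd{u}$ collapses the paper's interval-by-interval weighted-average estimates over $[0,\hat T_1]$, $[\hat T_1,\hat T_1+\hat T_2]$, $[\hat T_1+\hat T_2,\infty)$ into one inequality, and makes $T_1^\rho$ manifestly uniform in $\varepsilon$ (and in $\vartheta$ bounded away from $1$), as the lemma statement requires. One small slip to fix: you ask for $\eta\ll\rho$ as the descent-phase target radius, but $T_0$ --- and hence $C(T_0)$ and $C_\ast$ --- depends on $\eta$, so letting $\eta$ shrink with $\rho$ is circular; simply fix $\eta=\Delta_x/4$ once and for all (the paper effectively does this via $B_{\Delta_x/2}(a)$), so that $T_0$ and $C_\ast$ are genuine constants and only the perturbation $C(T_0)\rho\le\Delta_x/4$ is tuned by shrinking $\rho$.
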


\subsubsection{Attraction of the stochastic process towards \texorpdfstring{$a$}{a}}

The following lemma claims that at most at time $T_1^\rho$, which was introduced in Lemma~\ref{lm:X0_conv}, with high probability the following event happens. Starting at any point $\LP{\x}$ inside $G$ with some suitable initial conditions $\OM{\x}$ and $\TP{\x}$, the stochastic process $\X^{\sigma}$  either comes sufficiently close to the point $a$ or leaves the domain $G$ entirely. 

Define 
the first time when the process either comes close enough to point $a$ or leaves the domain $G$ as 
\begin{equation}\label{eq:def:tau_0^x}
    \tau_0 := \inf\{t \geq 0: \X^{\sigma}_t \in B_{\rho}(a) \cup \partial G\}.    
\end{equation}
Then the following lemma holds.

\begin{lemma}\label{lm:tau_0_<=T1}
       There exist $\rhobar, \epsilonbar > 0$ such that for any $0 < \rho < \rhobar$  and for any $0 < \varepsilon < \epsilonbar$  we have: 
    \begin{equation}
        \limsup_{\sigma \to 0} \frac{\sigma^2}{2} \log \sup_{\x \in \mathcal{C}^1_{\varepsilon, \rho}} \Prob_{\x} (\tau_0 > T_1^\rho) < 0,
    \end{equation}
    where $T_1^\rho> 0$ is the finite time defined in Lemma~\ref{lm:X0_conv} and $\mathcal{C}^1_{\varepsilon, \rho}$ is defined in \eqref{eq:def:set_C_1}.
\end{lemma}

\subsubsection{Behaviour in the annulus between \texorpdfstring{$B_{\rho}(a)$}{Brho(a)} and \texorpdfstring{$\partial G$}{boundary of G}}

The following lemma claims that the probability that, with $t$ big enough, the process $\X^{\sigma}$ stays in between $B_{\rho}(a)$ and $\R^d \setminus G$ without touching any of those sets, decays exponentially with $\sigma \to 0$. Moreover, with $t \to +\infty$ the rate of this decay tends to $- \infty$.

Introduce the following stopping time
\begin{equation}\label{eq:def:gamma}
    \gamma := \inf\{t \geq 0: \bmu_{t}^{\sigma} \notin \mathbb{B}_{(1 + 2\varepsilon)\rho}(\delta_a)\},    
\end{equation}
where $ \bmu^\sigma$ is part of the solution to equation \eqref{eq:SID_main_sys_gen}. 
We recall that $\tau_0$ is defined in \eqref{eq:def:tau_0^x}, and $\mathcal{C}^1_{\varepsilon, \rho}$ in \eqref{eq:def:set_C_1}. Consider the following lemma.

\begin{lemma}\label{lm:rest_inside_annulus_prob}
    There exist $\rhobar, \epsilonbar > 0$ such that for any $0 < \rho < \rhobar$ and for any $0 < \varepsilon < \epsilonbar$ the following limit holds:
    \begin{equation}
        \lim_{t \to \infty} \limsup_{\sigma \to 0} \frac{\sigma^2}{2} \log \sup_{\x \in \mathcal{C}^1_{\varepsilon, \rho}} \Prob_{\x}(t < \tau_0 < \gamma) = -\infty.
    \end{equation}
\end{lemma}

\subsubsection{Stabilization of the occupation measure}

 The following lemma establishes control of the occupation measure $\bmu^\sigma$ until the exit-time $\tau_G^\sigma$. We recall that 
 \begin{equation}\label{eq:def:H}
    H = \inf_{z \in \partial G} \{U_a(z) - U_a(a)\} 
 \end{equation}
 and present the following lemma.

\begin{lemma}\label{lm:gamma_gr_tau}
    There exist $\rhobar, \epsilonbar > 0$ such that for any $0 < \rho < \rhobar$ and for any $0 < \varepsilon < \epsilonbar$ the following limit holds:
    \begin{equation*}
        \lim_{\sigma \to 0} \sup_{\x \in \mathcal{C}^2_\rho} \Prob_{\x} \left(\gamma \leq \tau_G^\sigma \wedge \exp{\frac{2(H + 1)}{\sigma^2}} \right) = 0,
    \end{equation*}
    where 
    \begin{equation*}
        \mathcal{C}^2_\rho := \{\x \in \mathfrak{X}: T_{\text{st}}^\rho \leq \TP{\x} \leq \infty,\; \OM{\x} \in \mathbb{B}_{\rho}(\delta_a) \text{, and } \LP{\x} \in S_{\rho}(a)\}
    \end{equation*}
\end{lemma}

\subsubsection{Exit before nearing \texorpdfstring{$a$}{a}}

The following lemma states that a certain asymptotic upper bound for the probability of exiting the domain $G$ before approaching even smaller neighbourhood of $a$ holds. Let us define the following set 
\begin{equation}\label{eq:def:set_C_2}
    \mathcal{C}^3_{\varepsilon, \rho} := \{\x \in \mathfrak{X}: T_{\text{st}}^\rho \leq \TP{\x} \leq \infty,\; \OM{\x} \in \mathbb{B}_{(1 + 2\varepsilon)\rho}(\delta_a) \text{, and } \LP{\x} \in S_{(1 + \varepsilon)\rho}(a)\}.
\end{equation}
Consider the following lemma:

\begin{lemma}\label{lm:tau_0=tau_G}
    There exists $\epsilonbar > 0$ such that for any $0 < \varepsilon < \epsilonbar$ we have:
    \begin{equation*}
        \lim_{\rho \to 0} \limsup_{\sigma \to 0} \frac{\sigma^2}{2} \log \sup_{\x \in \mathcal{C}^3_{\varepsilon, \rho}} \Prob_{\x} (\tau_0 = \tau_G^\sigma, \tau_0 < \gamma)\leq -H,
    \end{equation*}
    where $H$ is defined as in \eqref{eq:def:H}.
\end{lemma}

\subsubsection{Control of dynamics for small time intervals}

The following result claims that for any $\zeta > 0$ and $c > 0$ there exists a small enough time $T(\zeta, c)$ such that during this time the probability that the diffusion~\eqref{eq:SID_main_sys_gen} drifts farther away than $\zeta$ from any starting point in $\mathcal{C}^1_{\varepsilon, \rho}$ decreases exponentially with the given rate $c$. Note that this time is chosen independently of $\varepsilon, \rho > 0$ small enough.
\begin{lemma}\label{lm:T(eps,c)}
    For any $0 < \rho, \varepsilon < 1$ and for any $\zeta, c > 0$ there exists a time $T(\zeta, c) > 0$, independent of $\rho$ and $\varepsilon$, such that 
    \begin{equation*}
        \limsup_{\sigma \to 0} \frac{\sigma^2}{2} \log \sup_{ \x \in \mathcal{C}^1_{\varepsilon, \rho}} \Prob_{\x} \left(\sup_{t \in [0, T(\zeta, c)]} |\X^{\sigma}_t - \LP{\x}| \geq \zeta \right) \leq -c,
    \end{equation*}
    where $\mathcal{C}^1_{\varepsilon, \rho}$ is defined in \eqref{eq:def:set_C_1}.
\end{lemma}

\subsubsection{Uniform lower bound for the probability of exit from \texorpdfstring{$G$}{G}}

The following lemma provides a uniform lower bound for the  probability of exit from domain $G$ starting from a  position that is close to $a$, given that the empirical measure of the process does not move far away from $\delta_a$.
\begin{lemma}\label{lm:unif_lower_bound}
    Let $\mathcal{C}^3_{\varepsilon, \rho}$ be defined as in \eqref{eq:def:set_C_2}. Then for any $\eta > 0$ there exists a finite time $T_0 > 0$ and $\rhobar(\eta), \epsilonbar(\eta) > 0$ such that for any $ 0 < \rho < \rhobar(\eta)$ and for any $0 < \varepsilon < \epsilonbar(\eta)$ we have:  
    \begin{equation*}
        \liminf_{\sigma \to 0} \frac{\sigma^2}{2} \log \inf_{\x \in \mathcal{C}^3_{\varepsilon, \rho}} \Prob_{\x}(\tau_{G}^\sigma \leq T_0,  \tau_G^\sigma < \gamma) > - (H + \eta).
    \end{equation*}
\end{lemma}

\subsection{Proof of the main theorem}\label{s:main_proof}

Let us first fix the parameters that are used in the proof. Take some $0 < \delta < H \wedge 1$ and fix the parameters $0 < \rhobar, \epsilonbar, \thetabar < 1$ to be small enough such that the conditions of the auxiliary lemmas~\ref{lm:X0_conv}--\ref{lm:tau_0=tau_G} hold. Decrease $\rhobar$ if necessary, so that for any $\rho < \rhobar$, due to Lemma~\ref{lm:tau_0=tau_G}, we have:
\begin{equation}\label{eq:aux:Lemma:tau_0=tau_G}
     \limsup_{\sigma \to 0} \frac{\sigma^2}{2} \log \sup_{ \x \in \mathcal{C}^3_{\varepsilon, \rho} } \Prob_{\x}(\tau_G^\sigma = \tau_0, \gamma > \tau_G^\sigma) \leq - \left(H - \frac{\delta}{4} \right).
\end{equation}
Decrease $\rhobar$ and $\epsilonbar$ one more time if necessary, so that Lemma~\ref{lm:unif_lower_bound} holds with $\eta = \delta/6$. Finally, fix some $0 < \rho < \rhobar$, $0 < \varepsilon < \epsilonbar$, and $0 < \vartheta < \thetabar$.

\subsubsection{Kramers' type law}
    Given the results of Lemmas \ref{lm:X0_conv} and \ref{lm:gamma_gr_tau}, we expect our process to spend most of its time near $a$ with $\sigma$ small enough. In order to have more information about this behaviour,
    we introduce the following stopping times
    \begin{equation}\label{eq:def_of_tau_theta}
        \begin{aligned}
            \tau_1 &:= \inf\{t \geq T_{\text{st}}^\rho :\; X_t^\sigma \in B_{\rho}(a) \cup \partial G \},\\
            \theta_{m} &:= \inf\{t \geq \tau_m: X_t^\sigma \in S_{(1 + \varepsilon)\rho}(a)\}, \\
            \tau_{m + 1} &:= \inf\{t\geq \theta_{m} :X_t^\sigma \in B_{\rho}(a) \cup \partial G \};
        \end{aligned}
    \end{equation}
    for $m \in \N$ with the convention that $\theta_{m + 1} = \infty$ if $\tau_m = \tau_G^\sigma$. We hereby can consider separately the intervals where $X^{\sigma}$ is close to point $a$ and those where it is not the case, i.e. the intervals of the type $[\tau_k, \theta_k]$ and $[\theta_k, \tau_{k + 1}]$. Besides, parameter $\rho$ controls the desired closeness of $X^\sigma$ to $a$. By $\tau_0$ and $\theta_0$ we denote the corresponding stopping times regardless of the number of exits that we had before, i.e.
    \begin{equation}\label{eq:def_of_tau_theta_0}
        \begin{aligned}
            \tau_0 &:= \inf\{t \geq 0: \X^{\sigma}_t \in B_{\rho}(a) \cup \partial G\}; \\
            \theta_0 &:= \inf\{t \geq 0 :\; \X_t^{\sigma} \in S_{(1 + \varepsilon)\rho}(a) \}.
        \end{aligned}
    \end{equation}

    \begin{figure}[t]
    \centering
    \begin{tikzpicture}
        \draw (0, 0) node[inner sep=0]{\includegraphics[width=0.37\columnwidth]{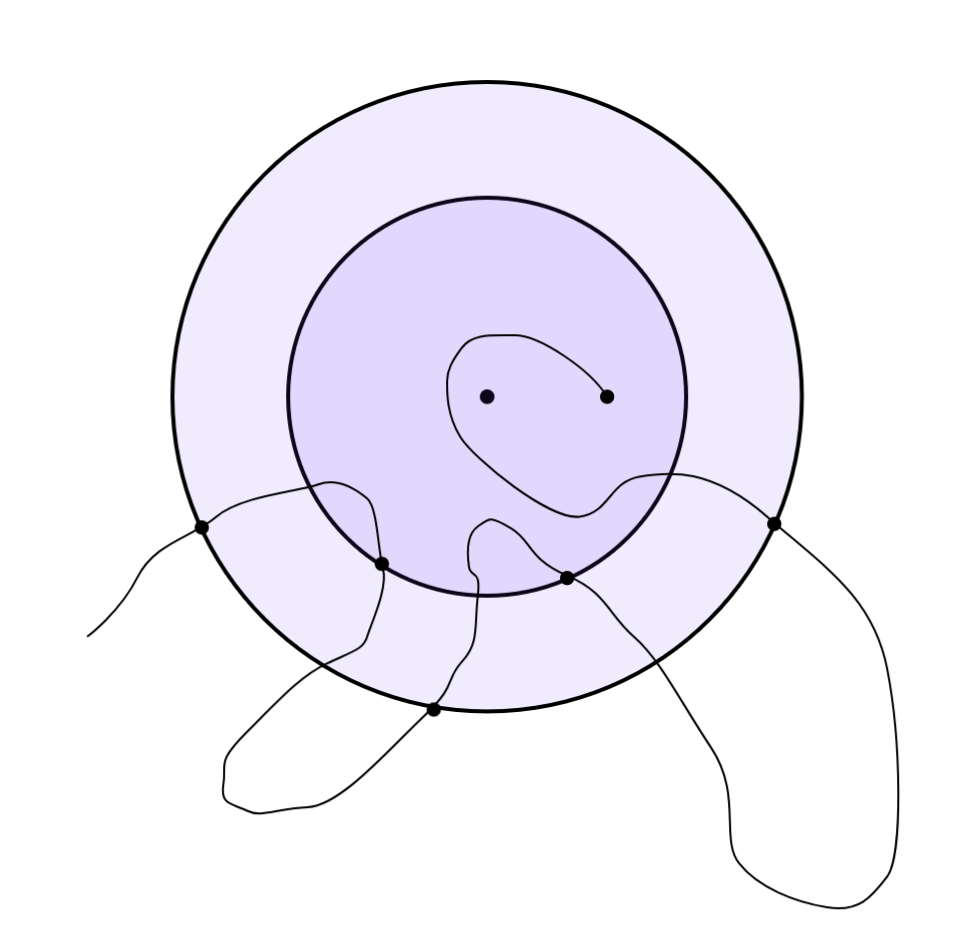}};
        \draw (0.1, 0.25) node {$\color{blue} a$};
        \draw (0, 0.9) node {$\color{blue} B_{\rho}(a)$};
        \draw (0.08, 1.6) node {$\color{blue} B_{(1 + \varepsilon)\rho}(a)$};
        \draw (-1.8, 0.8) node {$\color{blue} G$};
        
        \draw (0.8, 0.25) node {$\tau_1$};
        \draw (1.7, -0.25) node {$\theta_1$};
        \draw (0.7, -0.55) node {$\tau_2$};
        \draw (-0.06, -1.4) node {$\theta_2$};
        \draw (-0.35, -0.7) node {$\tau_3$};
        \draw (-1.15, -0.35) node {$\theta_3$};
        \draw (-1.9, -0.9) node {$t$};
    \end{tikzpicture}
    \caption{Illustration of the definitions of $\tau_k$ and $\theta_k$ for $d = 2$.}
    \end{figure}

    \textit{Lower bound}. Take the time $T_0 := T(\varepsilon\rho, H )$, where $T(\zeta, c)$ is defined by Lemma~\ref{lm:T(eps,c)}, i.e., it is a small enough time such that the probability that the process $\X^\sigma$ moves further away than $\varepsilon \rho$ from its starting point within time $T_0$ decays exponentially fast with a rate at least $H$. Note that for any $\x \in \mathfrak{X}$ such that $\LP{\x} \in B_\rho(a)$, we have:
    \begin{equation*}
        \Prob_\x (\theta_0 < T_0) \leq \Prob_\x \big(\sup_{t \in [0, T_0]} |\X_t^\sigma - \LP{\x}| \geq \varepsilon \rho \big).
    \end{equation*}
    In particular, if we denote
    \begin{equation*}
        \mathcal{C}^4_{\varepsilon, \rho} := \{ \x \in \mathfrak{X}: T^\rho_{\text{st}} \leq \TP{\x} \leq \infty;  \OM{\x} \in \mathbb{B}_{(1 + 2\varepsilon)\rho}(\delta_a); \LP{\x} \in B_{\rho}(a)\}, 
    \end{equation*}
    then by Lemma~\ref{lm:T(eps,c)}, for any $\sigma > 0$ small enough, we have:
    \begin{equation}
    \label{eq:aux:theta_0<T_0}
        \sup_{\x \in \mathcal{C}^4_{\varepsilon, \rho}} \Prob_\x (\theta_0 < T_0) \leq e^{-\frac{2H}{\sigma^2}}.
    \end{equation} 
    
    Consider the following event $\{\tau_G^\sigma \leq k T_0\}$. Using the Markov property of the process $(t, \mu^\sigma_t, X^\sigma_t)$ at time $T_{\text{st}}^\rho$, we obtain the following inequality:
    \begin{equation*}
    \begin{aligned}
        \Prob_{x_0}(\tau_G^\sigma \leq k T_0) &\leq \Prob_{x_0}(\tau_G^\sigma \leq T_{\text{st}}^\rho) + \Prob_{x_0}(X_{T_{\text{st}}^\rho}^\sigma \notin B_{\rho}(a)) + \Prob_{x_0} (\mu^\sigma_{T_{\text{st}}^\rho} \notin \mathbb{B}_{\rho}(\delta_a)) \\
        & \quad + \sup_{\x \in \mathcal{C}^2_\rho} \Prob_\x (\tau_G^\sigma \leq kT_0).
    \end{aligned}
    \end{equation*}
    
    We can split the last probability into two parts accordingly to whether $\{\gamma \leq \tau_G^\sigma\}$ holds or not, where $\gamma$ is the stopping time defined in \eqref{eq:def:gamma}. Recall that $\mathcal{C}^3_{\varepsilon, \rho}$ is defined in \eqref{eq:def:set_C_2}. Note that if the event $\{\tau_G^\sigma \leq k T_0, \gamma > \tau_G^\sigma\}$ takes place, then either first $k$ of disjoint events $\{\tau_i = \tau_G^\sigma,  \gamma > \tau_G^\sigma\}$ occur, or, otherwise, $\{\tau_k < \tau_G^\sigma \leq k T_0, \gamma > \tau_G^\sigma\}$. From the second event, it follows that at least one period of time $\theta_{i+1} - \tau_i$ was less than $T_0$. That gives us the following inequality:
    \begin{equation*}
    \begin{aligned}
        \Prob_{\x}(\tau_G^\sigma \leq k T_0) &\leq  \sum_{i = 1}^k \Prob_{\x}(\tau_G^\sigma = \tau_i, \gamma > \tau_G^\sigma) + \Prob_{\x}\left(\min_{1 \leq i \leq k} \{\theta_{i+1} - \tau_{i}\} < T_0, \gamma > \tau_G^\sigma \right)\\
        &\quad + \Prob_{\x} (\gamma \leq \tau_G^\sigma \leq kT_0). 
    \end{aligned}
    \end{equation*}
    Using the strong Markov property of $(t, \bmu^\sigma, \X^\sigma)$, that finally gives us:
        
    \begin{equation*}
    \begin{aligned}
        \Prob_{x_0}(\tau_G^\sigma \leq k T_0) & \leq \Prob_{x_0}(\tau_G^\sigma \leq T_{\text{st}}^\rho) + \Prob_{x_0}(X_{T_{\text{st}}^\rho}^\sigma \notin B_{\rho}(a)) + \Prob_{x_0} (\mu^\sigma_{T_{\text{st}}^\rho} \notin \mathbb{B}_{\rho}(\delta_a)) \\
        &\quad + k \sup_{ \x \in \mathcal{C}^3_{\varepsilon, \rho} } \Prob_{\x}(\tau_G^\sigma = \tau_0, \gamma > \tau_G^\sigma)  + k \sup_{\x \in \mathcal{C}^4_{\varepsilon, \rho}} \! \Prob_{\x}(\theta_0 < T_0, \gamma > \tau_G^\sigma) \\
        &\quad + \sup_{\x \in \mathcal{C}^2_\rho}\Prob_{\x}(\gamma \leq \tau_G^\sigma \leq kT_0).
    \end{aligned}
    \end{equation*}
    By \eqref{eq:aux:Lemma:tau_0=tau_G} and \eqref{eq:aux:theta_0<T_0}, we can choose $\sigma$ small enough such that the probabilities $\sup_{\x \in \mathcal{C}^3_{\varepsilon, \rho}} \Prob_{\x}(\tau_G^\sigma = \tau_0, \gamma > \tau_G^\sigma)$ and $\sup_{\x \in \mathcal{C}^4_{\varepsilon, \rho}} \Prob_{\x}(\theta_0 < T_0, \gamma > \tau_G^\sigma)$ above are less or equal than $\exp{-\frac{2(H - \delta/2)}{\sigma^2}}$. We also use Lemma \ref{lm:conv_to_delta_a} in order to deal with the first three probabilities and Lemma \ref{lm:gamma_gr_tau} for the $\Prob_{\x}(\gamma \leq \tau_G^\sigma \leq kT_0)$. Therefore, we establish the following upper bound:
    \begin{equation*}
        \Prob_{x_0}(\tau_G^\sigma \leq k T_0) \leq  o_{\sigma} + 2 k e^{\frac{-2(H - \delta/2)}{\sigma^2}},
    \end{equation*}
    where $o_\sigma$ is an infinitesimal w.r.t. $\sigma$. Finally, by choosing $k = \left\lfloor \frac{1}{T_0} \exp{\frac{2 (H - \delta)}{\sigma^2}} \right\rfloor$ we establish the following result:
    \begin{equation}
        \lim_{\sigma \to 0}\Prob_{x_0}(\tau_G^\sigma \leq e^{\frac{2 (H - \delta)}{\sigma^2}}) = 0.
    \end{equation}

    \textit{Upper bound}. Let $T_0 > 0$ be defined by Lemma~\ref{lm:unif_lower_bound}. Since $\{\tau_G^\sigma \leq T_0, \tau_G^{\sigma} < \gamma\} \subset \{\tau_G^\sigma \wedge \gamma \leq T_0\}$, and due to the way how $\rho$ and $\varepsilon$ were fixed, the result of Lemma~\ref{lm:unif_lower_bound} takes the form:
    \begin{equation}\label{eq:main_proof:tau_T0}
        \liminf_{\sigma \to 0} \frac{\sigma^2}{2} \log \inf_{\x \in \mathcal{C}^3_{\varepsilon, \rho}} \Prob_{\x}(\tau_{G}^\sigma \wedge \gamma \leq T_0) > - (H + \eta),
    \end{equation}
    where we recall that $\eta$ was fixed to be equal to $\delta/6$. By Lemma \ref{lm:tau_0_<=T1}, there exists $T_1^\rho > 0$ such that 
    \begin{equation}\label{eq:main_proof:Lemma3.3}
        \limsup_{\sigma \to 0} \frac{\sigma^2}{2} \log \sup_{\x \in \mathcal{C}^1_{\varepsilon, \rho}} \Prob_{\x} (\tau_0 > T_1^\rho) < 0,
    \end{equation}
    where $\mathcal{C}^1_{\varepsilon, \rho}$ is defined in \eqref{eq:def:set_C_1} and $\tau_0$ in \eqref{eq:def_of_tau_theta_0}.
    
    Let $T := T_0 + T_1^\rho$ and let us now introduce the following decomposition $\mathcal{C}^1_{\varepsilon, \rho} = A \cup B$, where $A$ is defined as a subset of $\mathfrak{X}$ such that for any $\x \in A$ we have
    \begin{equation*}
        \Prob_\x (\gamma \leq T) \geq e^{-\frac{2 \eta}{\sigma^2}},
    \end{equation*}
    and $B \subset \mathfrak{X}$ as its complement in $\mathcal{C}^1_{\varepsilon, \rho}$. Consider the following equation:
    \begin{equation} \label{eq:aux:bound_on_P}
        \begin{aligned}
            \inf_{\x \in \mathcal{C}^1_{\varepsilon, \rho}} \Prob_{\x} (\tau_G^\sigma \wedge \gamma \leq T ) = \min \left( \inf_{\x \in A} \Prob_{\x} (\tau_G^\sigma \wedge \gamma \leq T ); \;\;\; \inf_{\x \in B} \Prob_{\x} (\tau_G^\sigma \wedge \gamma \leq T )\right).
        \end{aligned}
    \end{equation}
    For the first infumum, by the definition of $A$, we have: 
    \begin{equation*}
        \inf_{\x \in A} \Prob_{\x} (\tau_G^\sigma \wedge \gamma \leq T ) \geq \Prob_\x (\gamma \leq T) \geq e^{-\frac{2 \eta}{\sigma^2}}.
    \end{equation*}
    For the second probability, note that the following inclusion of the events takes place: $\{\tau_0 \wedge \gamma \leq T_1^\rho, (\tau_G^\sigma - \tau_0\wedge\gamma) \wedge (\gamma - \tau_0 \wedge \gamma) \leq T_0\} \subset \{\tau_G^\sigma \wedge \gamma  \leq T\}$. Thus, we can use the strong Markov property of the triple $(t, \bmu^\sigma_t, \X^\sigma_t)$, evaluated at the stopping time $\tau_0 \wedge \gamma$, and thus get the following lower bound:
    \begin{equation*}
    \begin{aligned}
        \inf_{\x \in B} \Prob_{\x} (\tau_G^\sigma \wedge \gamma \leq T ) & \geq \inf_{\x \in B} \Prob_{\x} (\tau_0 \wedge \gamma \leq T_1^\rho) \inf_{\x \in \mathcal{C}^3_{\varepsilon, \rho}} \Prob_\x( \tau_G^\sigma \wedge \gamma \leq T_0) \\   
        & > \inf_{\x \in B} \Prob_{\x} (\tau_0 \leq T_1^\rho, \gamma > \tau_0) \; e^{-\frac{2(H + 2\eta)}{\sigma^2}},
    \end{aligned}
   \end{equation*}
   where the last inequality holds due to \eqref{eq:main_proof:tau_T0}, for $\sigma > 0$ small enough. Let us now consider the first probability:
   \begin{equation*}
   \begin{aligned}
       \inf_{\x \in B} \Prob_{\x} (\tau_0 \leq T_1^\rho, \gamma > \tau_0) &\geq \inf_{\x \in B} \Prob_\x(\tau_0 \leq T_1^\rho < \gamma) \\
       & = \inf_{\x \in B} \Big\{ \Prob_\x (\gamma > T_1^\rho) - \Prob_\x (\tau_0 > T_1^\rho, \gamma > T_1^\rho)\Big\} \\
       & \geq 1 - \sup_{\x \in B} \Prob_\x(\gamma \leq T) - \sup_{\x \in B} \Prob_\x (\tau_0 > T_1^\rho, \gamma > T_1^\rho) \geq \frac{1}{2},
   \end{aligned}
   \end{equation*}
    where, in the last inequality, we use the definition of the set $B$ and equation \eqref{eq:main_proof:Lemma3.3}.
   


    Now we can come back to the equation \eqref{eq:aux:bound_on_P} and finally conclude that
    \begin{equation}\label{eq:aux:bound_on_q}
        q := \inf_{\x \in \mathcal{C}^1_{\varepsilon, \rho}} \Prob_{\x} (\tau_G^\sigma \wedge \gamma \leq T ) \geq \exp{-\frac{2(H + 3\eta)}{\sigma^2}}.
    \end{equation}
    
    This lower bound will help us to calculate the probability of the event $\{\tau_G^\sigma \wedge \gamma > kT\}$ for any $k \in \N$ in the following way. Using the Markov property of the triple $(t, \bmu_t^\sigma, \X^\sigma_t)$, evaluated at the time $kT$, we get the following equation:
    \begin{equation*}
    \begin{aligned}
        \Prob_{\x} (\tau_G^\sigma \wedge \gamma > (k + 1)T) & \leq \sup_{\mathbf{y} \in \mathcal{C}^1_{\varepsilon, \rho}} \Prob_{\mathbf{y}} (\tau_G^\sigma \wedge \gamma > T) \, \Prob_\x (\tau_G^\sigma \wedge \gamma > kT) \\
        & \leq (1 - q)\, \Prob_\x (\tau_G^\sigma \wedge \gamma > kT).
     \end{aligned}    
    \end{equation*}
    Therefore, by induction in $k$, we get:
    \begin{equation}\label{eq:aux:(1-q)^k}
        \sup_{\x \in \mathcal{C}^1_{\varepsilon, \rho}} \Prob_{\x} (\tau_G^\sigma \wedge \gamma > kT) \leq (1 - q)^k.
    \end{equation}
    Note that $\tau_G^\sigma \wedge \gamma \leq \Big( T +  \sum_{k=1}^\infty  T \1_{\{\tau_G^\sigma \wedge \gamma > kT) \}}  \Big)$. Therefore, by \eqref{eq:aux:(1-q)^k} and \eqref{eq:aux:bound_on_q}, we can express the following expectation as:
    \begin{equation*}
        \sup_{\x \in  \mathcal{C}^1_{\varepsilon, \rho}}\E_{\x} \big( \tau_G^\sigma \wedge \gamma \big) \leq T \sum_{k = 0}^\infty (1 - q)^k = \frac{T}{q} \leq T \exp{\frac{2(H + 3\eta)}{\sigma^2}}.
    \end{equation*}
    By Markov's inequality, we get:
    \begin{equation*}
    \begin{aligned}
        \sup_{\x \in  \mathcal{C}^1_{\varepsilon, \rho}}\Prob_\x \left(\tau_G^\sigma \wedge \gamma \geq \exp{\frac{2(H + \delta)}{\sigma^2}}\right) 
        & \leq \exp{-\frac{2(H + \delta)}{\sigma^2}} \sup_{\x \in  \mathcal{C}^1_{\varepsilon, \rho}}\E_{\x} \big( \tau_G^\sigma \wedge \gamma \big) \\
        &\leq Te^{-\delta/\sigma^2} \xrightarrow[\sigma \to 0]{} 0.
    \end{aligned}
    \end{equation*}

    We remark that by Lemmas \ref{lm:conv_to_delta_a}, \ref{lm:gamma_gr_tau} and the derivations above, we have
    \begin{equation*}
    \begin{aligned}
        &\Prob_{x_0} \left(\tau_G^\sigma \geq \exp{\frac{2(H + \delta)}{\sigma^2}} \right) \\
        &\leq \max \Big\{ \Prob_{x_0} (\tau_G^\sigma \leq T_{\text{st}}^\rho);\; \Prob_{x_0} (X^\sigma_{T_{\text{st}}^\rho} \notin B_\rho(a)); \; \Prob_{x_0} (\mu^\sigma_{T_{\text{st}}^\rho} \notin \mathbb{B}_\rho(a)) \Big\} \\
        & \quad + \sup_{\x \in \mathcal{C}^2_\rho} \Prob_\x \left(\gamma \leq \tau_G^\sigma \wedge \exp{\frac{2(H + 1)}{\sigma^2}} \right)  \\
        & \quad + \sup_{\x \in \mathcal{C}^2_{\rho}} \Prob_\x \left(\tau_G^\sigma \geq \exp{\frac{2(H + \delta)}{\sigma^2}}, \gamma > \tau_G^\sigma \wedge \exp{\frac{2(H + 1)}{\sigma^2}} \right)\\
        & \xrightarrow[\sigma \to 0]{} 0.
    \end{aligned}
    \end{equation*}
This concludes the proof.

\subsubsection{Exit-location}\label{s:exit_location}

Before proving the exit-location result, we provide some observations about the geometry of the domain $G$ with respect to the effective potential. Note that its level sets $L_{\alpha} := \{x \in \R^d: U_a(x) = \alpha\}$ are smooth hypersurfaces, since both $V$ and $F$ belong to $C^2(\R^d; \R)$. Here, for simplicity, we denote by $L_{\alpha}$ only the parts of the level sets that intersect with $\overline{G}$. By Assumption \ref{A-2}.\ref{A-2.Effect_pot_conv}, for $\alpha = H$ there is in fact only one ``part'' of the hypersurface intersecting the set $\overline{G}$. Namely, it can be shown that $L_{H}$ is the boundary of the set $L^{-}_{H}:=\{x \in \R^d: U_a(x) \leq H\}$ that itself belongs to $\overline{G}$ and is a bounded \textit{connected} set (otherwise the Assumption \ref{A-2}.\ref{A-2.Effect_pot_conv} is violated).

Note that there exists a constant $C_H > 0$ such that for any point on the surface $x \in L_H$: $\langle n(x); - \nabla U_a(x) \rangle \geq C_H$, where $n(x)$ is the inner unit normal vector to the surface $L_H$ at the respective point. Indeed, if it is not the case, then either by the continuity argument there exists $x^* \in L_H$ such that $n(x^*) = 0$, which violets Assumption \ref{A-2}.\ref{A-2.Effect_pot_conv}, or all the vectors $-\nabla U_a(x)$ pin outside of the set $L^{-}_H$, which also contradicts the assumption above. It means that if we plug the interior $\Int(L^{-}_H)$ and the set $L_H$ in Assumptions \ref{A-2} instead of $G$ and $\partial G$, then all the conditions will be still satisfied. Then, by the proof  of Kramers' type law above, for $\tau^{\sigma}_{\Int(L^{-}_H)}:= \inf\{t \geq 0: \X^{ \sigma}_t \notin \Int(L^{-}_H)\}$, we have for any $\delta > 0$:
\begin{equation*}
    \sup_{\x \in \mathcal{C}^2_{\rho}} \Prob_{\x} \left( e^{\frac{2 (H - \delta)}{\sigma^2}} < \tau^{\sigma}_{\Int(L^{-}_H)} < e^{\frac{2 (H + \delta)}{\sigma^2}}\right),
\end{equation*}
where $\mathcal{C}^2_{\rho}$ is defined in \eqref{eq:def:set_C_4}. 

 \begin{figure}[b]
    \centering
    \begin{tikzpicture}
        \draw (0, 0)node[inner sep=0]{\includegraphics[width=0.465\columnwidth]{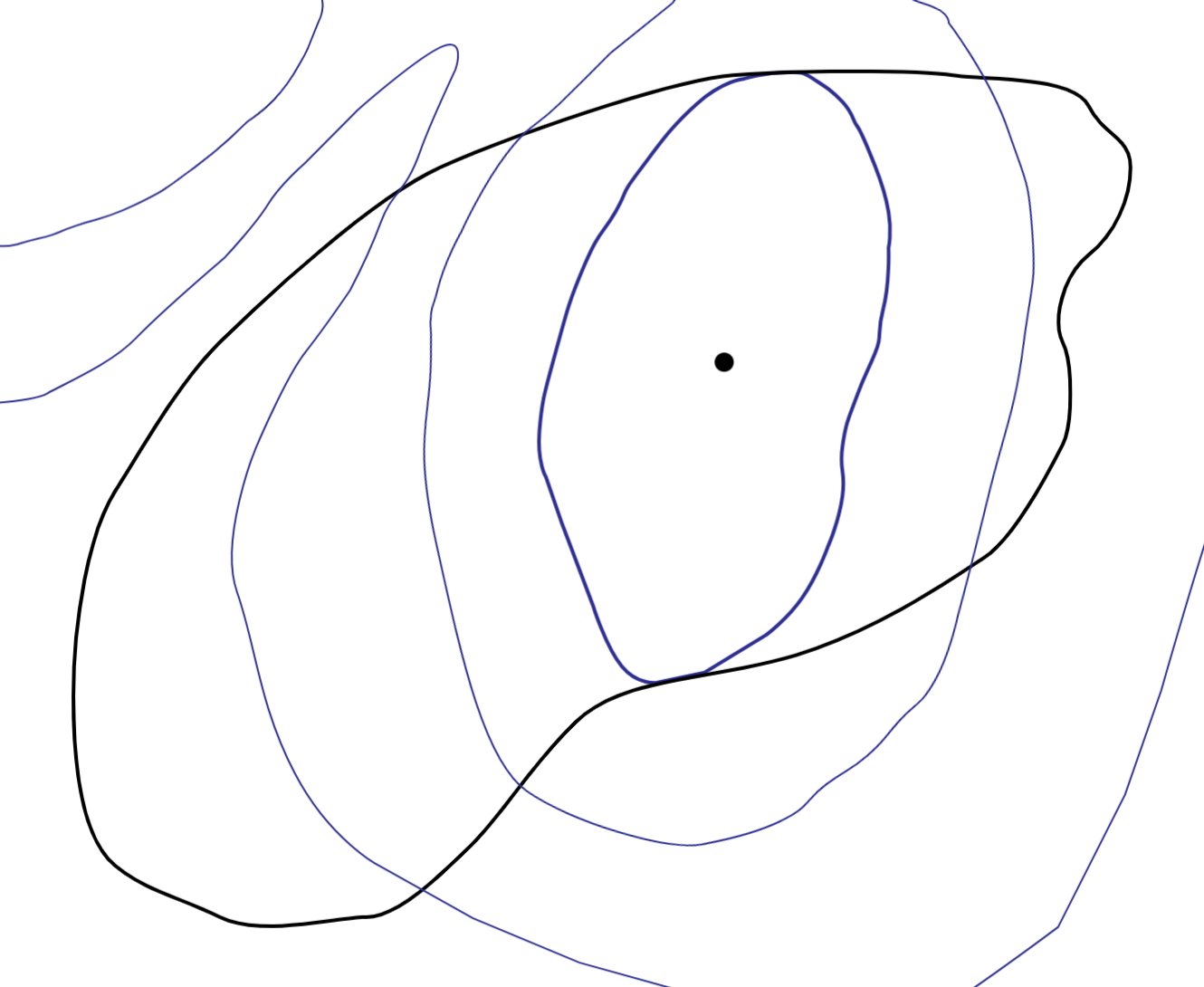}};
        \draw (0.6, 0.9)  node {$a$};
        \draw (-0.4, -0.4)  node {$\color{blue} L_{H}$};
        \draw (-1.1, -1.9)  node {$G$};
    \end{tikzpicture}
    \caption{Domain $G$ with level sets of $U_a = V + F(\cdot - a) - V(a)$. $L_{H} = \{x \in \R^d: U_a(x) = H\}$ is the smallest level set that touches the boundary $\partial G$}
    \label{fig:exit-location}
\end{figure}   

Since $U_a$ is continuous, there exists a small enlargement of $L_H$ such that the property $\langle n(x); - \nabla U_a(x) \rangle \geq \text{Const} > 0$ is still satisfied. Namely, there exists $\bar{\eta} > 0$ small enough such that for any $\eta < \bar{\eta}$ there exists a constant $C_{H + \eta} > 0$ such that $\langle n(x); - \nabla U_a(x) \rangle \geq C_{H + \eta}$. By analogy with the previous statement, it means that Assumptions \ref{A-2} are satisfied for $\eta > 0$ small enough and thus, for any $\delta > 0$,
\begin{equation*}
    \sup_{\x \in \mathcal{C}^2_{\rho}} \Prob_{\x} \left( e^{\frac{2 (H + \eta - \delta)}{\sigma^2}} < \tau^{\sigma}_{\Int(L^{-}_{H + \eta})} < e^{\frac{2 (H + \eta +\delta)}{\sigma^2}}\right).
\end{equation*}

Now we are ready to prove the exit-location result. Take the set $N \subset \partial G$ described in the theorem. By definition, $\inf_{z \in N} \{U_a(z) - U_a(a)\} > H$. That in particular means that we can choose $\eta < \bar{\eta}$ such that $H + \eta$ is smaller then $\inf_{z \in N} \{U_a(z) - U_a(a)\}$. It means that $N \cap \Cl(L_{H + \eta}^{-})$ is an empty set. 

The fact that $X_{\tau_G^\sigma}^\sigma \in N$ means that, after converging first towards $a$, we left the domain $L^{-}_{H + \eta}$ before the time $\tau_G^\sigma$, which is the exit-time from $G$. The following inequalities show that it is unlikely. Consider:
\begin{equation*}
\begin{aligned}
        \Prob_{x_0} (X^\sigma_{\tau_G^\sigma} \in N) & \leq \max \left\{ \Prob_{x_0} (\tau_G^\sigma \leq T_{\text{st}}^\rho); \Prob_{x_0} (X^\sigma_{T_{\text{st}}^\rho} \in B_\rho(a)); \Prob_{x_0} (\mu^\sigma_{T_{\text{st}}^\rho} \in \mathbb{B}_\rho(\delta_a)) \right\} \\
        & \quad + \sup_{\x \in \mathcal{C}^2_{\rho}}\Prob_{\x} (\tau^\sigma_{L^{-}_{H + \eta}} \leq \tau_G^\sigma).
\end{aligned}
\end{equation*}

Taking $\delta = \eta / 3$, and using Lemma~\ref{lm:conv_to_delta_a}, we get: 
\begin{equation*}
\begin{aligned}
    \Prob_{x_0} (X^\sigma_{\tau_G^\sigma} \in N)    
    & \leq o_\sigma + \sup_{\x \in \mathcal{C}^2_{\rho}}\Prob_{\x} \left(\tau_{G}^\sigma \geq e^{\frac{2(H + \delta)}{\sigma^2}} \right) + \sup_{\x \in \mathcal{C}^2_{\rho}}\Prob_{\x} \left(\tau^\sigma_{L^{-}_{H + \eta}} \leq \tau_G^\sigma \leq e^{\frac{2(H + \delta)}{\sigma^2}}\right).
\end{aligned}
\end{equation*}

That finally gives us the following upper bound for $\Prob_{x_0} (X^\sigma_{\tau_G^\sigma} \in N)$:

\begin{equation*}
    o_\sigma + \sup_{\x \in \mathcal{C}^2_{\rho}}\Prob_{\x} \left(\tau_{G}^\sigma \geq e^{\frac{2(H + \delta)}{\sigma^2}} \right) + \sup_{\x \in \mathcal{C}^2_{\rho}}\Prob_{\x} \left(\tau^\sigma_{L^{-}_{H + \eta}} \leq e^{\frac{2(H + \eta - \delta)}{\sigma^2}}\right),
\end{equation*}

that tends to zero by Lemma \ref{lm:gamma_gr_tau} and Kramers' type law for $G$ and $L_{H + \eta}^{-}$ proved above.

\subsection{Proofs of auxiliary lemmas}\label{s:aux_proofs}
\subsubsection{Proof of Lemma \ref{lm:conv_to_delta_a}: Initial descent to the point of attraction \texorpdfstring{$a$}{a}}
Let us fix $l:= \inf_{t>0, z \in \partial G} |X^0_t - z|$,  the distance between the deterministic path starting at $x_0$ and the boundary of the domain $G$. Let us also define as $T_{\text{st}}^{\rho}$ the first time when $\mu^0_{t} \in \mathbb{B}_{\rho/2}(\delta_a)$ as well as $X_t^0 \in B_{\rho/2}(a)$, where $\mu^0_{t}:= \frac{1}{t} \int_0^t \delta_{X^0_s}\dd{s}$ is the empirical measure of the deterministic process. This time obviously exists and is finite, since, by the definition of $a$, $X_t^0 \xrightarrow[t \to \infty]{} a$. 

Let us choose $l_\rho := l \wedge (\rho/4)$. Define $\Phi := \{\phi \in C([0, T_{\text{st}}^\rho]; \R^d): \|X^0 - \phi\|_{\infty} \geq l_{\rho}\}$. Then, since $\{X^\sigma_{T_{\text{st}}^\rho} \notin B_{\rho}(a)\} \subseteq \{X^\sigma \in \Phi\}$, by Theorem \ref{th:LDP_gen}, we have:
    \begin{equation*}
            \limsup_{\sigma \to 0} \frac{\sigma^2}{2} \log \Prob_{x_0} (X^\sigma_{T_{\text{st}}^\rho} \notin B_{\rho}(a)) \leq - \inf_{\phi \in \Phi} I_{T_{\text{st}}^\rho}^{x_0} (\phi).
    \end{equation*}

Furthermore, note that for any $\phi \notin \Phi$ we have $\mathbb{W}_2^2(\frac{1}{T_{\text{st}}^\rho}\int_0^{T_{\text{st}}^\rho} \delta_{\phi(s)}\dd{s}; \delta_a) \leq 2\mathbb{W}_2^2(\frac{1}{T_{\text{st}}^\rho}\int_0^{T_{\text{st}}^\rho} \delta_{\phi(s)}\dd{s}; \mu^0_{T_{\text{st}}^\rho}) + 2\mathbb{W}_2^2(\mu^0_{T_{\text{st}}^\rho}; \delta_a)  = \frac{1}{T_{\text{st}}^\rho}\int_0^{T_{\text{st}}^\rho} |\phi(s) - a|^2\dd{s} \leq \rho^2$. Therefore, by Theorem~\ref{th:LDP_gen}, the following inequality also holds:
    \begin{equation*}
            \limsup_{\sigma \to 0} \frac{\sigma^2}{2} \log \Prob_{x_0} (\mu^\sigma_{T_{\text{st}}^\rho} \notin \mathbb{B}_{\rho}(\delta_a) )
            \leq - \inf_{\phi \in \Phi} I_{T_{\text{st}}^\rho}^{x_0} (\phi).
    \end{equation*}

    Moreover, by the choice of $l_\rho$, we also have:
    \begin{equation*}
            \limsup_{\sigma \to 0} \frac{\sigma^2}{2} \log \Prob_{x_0} (\tau_G^\sigma \leq T_{\text{st}}^\rho)
            \leq - \inf_{\phi \in \Phi} I_{T_{\text{st}}^\rho}^{x_0} (\phi).        
    \end{equation*}

    Since $I_{T_{\text{st}}^\rho}^{x_0}$ has only one minimum which is given by $X^0$, then $I_{T_{\text{st}}^\rho}^{x_0}(X^0) = 0$, and as the distance between $X^0$ and the set $\Phi$ is strictly positive, we conclude that $\inf_{\phi \in \Phi} I_{T_{\text{st}}^\rho}^{x_0} (\phi) > 0$. That completes the proof.

\subsubsection{Proof of Lemma \ref{lm:X0_conv}: Descent of the deterministic process towards \texorpdfstring{$a$}{a}}
    Let us recall that $\Delta_x > 0$ is the positive constant introduced in Assumption~\ref{A-2}.\ref{A-2.Strong_attraction_a}. We separate the dynamics of the system into the following three parts (see Figure~\ref{fig:X0_conv_pic}) and present the proof in several steps. \textit{Step 1} shows that in time $\hat{T}_{1} > 0$, for any starting position $\LP{\x} \in G$, the process $\X^{0}$ converges inside $B_{\Delta_x}(a)$, whereas its occupation measure $\bmu^{0}$ does not move far from $\delta_a$ and stays inside $\mathbb{B}_{(1 + 3\varepsilon)\rho}(\delta_a)$. Rigorously, we prove that there exists a finite time $\hat{T}_{1} > 0$ such that for any $\x$ and for any $0 \leq t \leq \hat{T}_{1}$ we have
    \begin{equation*}
        \X_{t}^{0} \in G, \; \bmu_t^{0} \in \mathbb{B}_{(1 + 2.5\varepsilon)\rho}(\delta_a), \text{ and } \X_{\hat{T}_{1}}^{0} \in B_{\Delta_x}(a).
    \end{equation*}

    In \textit{Step 2}, we show that inside the set $B_{\Delta_x}(a)$, the  attraction force of the effective potential $U_a = V + F*\delta_a$ becomes so strong that, in some time $\hat{T}_2 > 0$, it drags $\X^{0}$ inside even smaller ball $B_{(1 - \vartheta)\rho}(a)$, whereas its occupation measure, as before, does not move far from $\delta_a$ and still stays inside $\mathbb{B}_{(1 + 3\varepsilon)\rho}(\delta_a)$. Rigorously, we show that there exists a finite time $\hat{T}_{2} > 0$ such that for any $\x \in \mathcal{C}^1_{\varepsilon, \rho}$ and for any $0 \leq t \leq \hat{T}_{2}$ we have 
    \begin{equation*}
        \X_{\hat{T}_{1} + t}^{0} \in G, \; \bmu_{\hat{T}_{1} + t}^{0} \in \mathbb{B}_{(1 + 3\varepsilon)\rho}(\delta_a), \text{ and } \X_{\hat{T}_{1} + \hat{T}_{2}}^{0} \in B_{(1 - \vartheta)\rho}(a).
    \end{equation*}
    
    Finally, in \textit{Step 3}, we show that, after hitting $B_{(1 - \vartheta)\rho}(a)$, $\X^{0}$ stays inside this ball for a long time $\hat{T}_3$ that is enough to attract the occupation measure $\bmu^{0}$ back inside $\mathbb{B}_{(1 + 2\varepsilon)\rho}(\delta_a)$. In particular, we show that there exists a finite time $\hat{T}_3 > 0$ such that for any $\x \in \mathcal{C}^1_{\varepsilon, \rho}$ and for any $0 \leq t \leq \hat{T}_{3}$ we have
    \begin{equation*}
        \X_{\hat{T}_{1} + \hat{T}_{2} + t}^{0} \in B_{(1 - \vartheta)\rho}(a), \; \bmu_{\hat{T}_{1} + \hat{T}_{2} + t}^{0} \in \mathbb{B}_{(1 + 3\varepsilon)\rho}(\delta_a), \text{ and } \bmu_{\hat{T}_{1} + \hat{T}_{2} + t}^{0} \in \mathbb{B}_{(1 + 2\varepsilon)\rho}(\delta_a).
    \end{equation*}

    Thus, the time that we are looking for in this lemma is represented through the following sum $T_1^\rho := \hat{T}_1 + \hat{T}_2 + \hat{T}_3$ that is independent of the initial condition $\mathcal{C}^1_{\varepsilon, \rho}$. Now we prove each step separately.

    \begin{figure}[b]
    \centering
    \begin{tikzpicture}
        \draw (0, 0) node[inner sep=0]{\includegraphics[width=0.465\columnwidth]{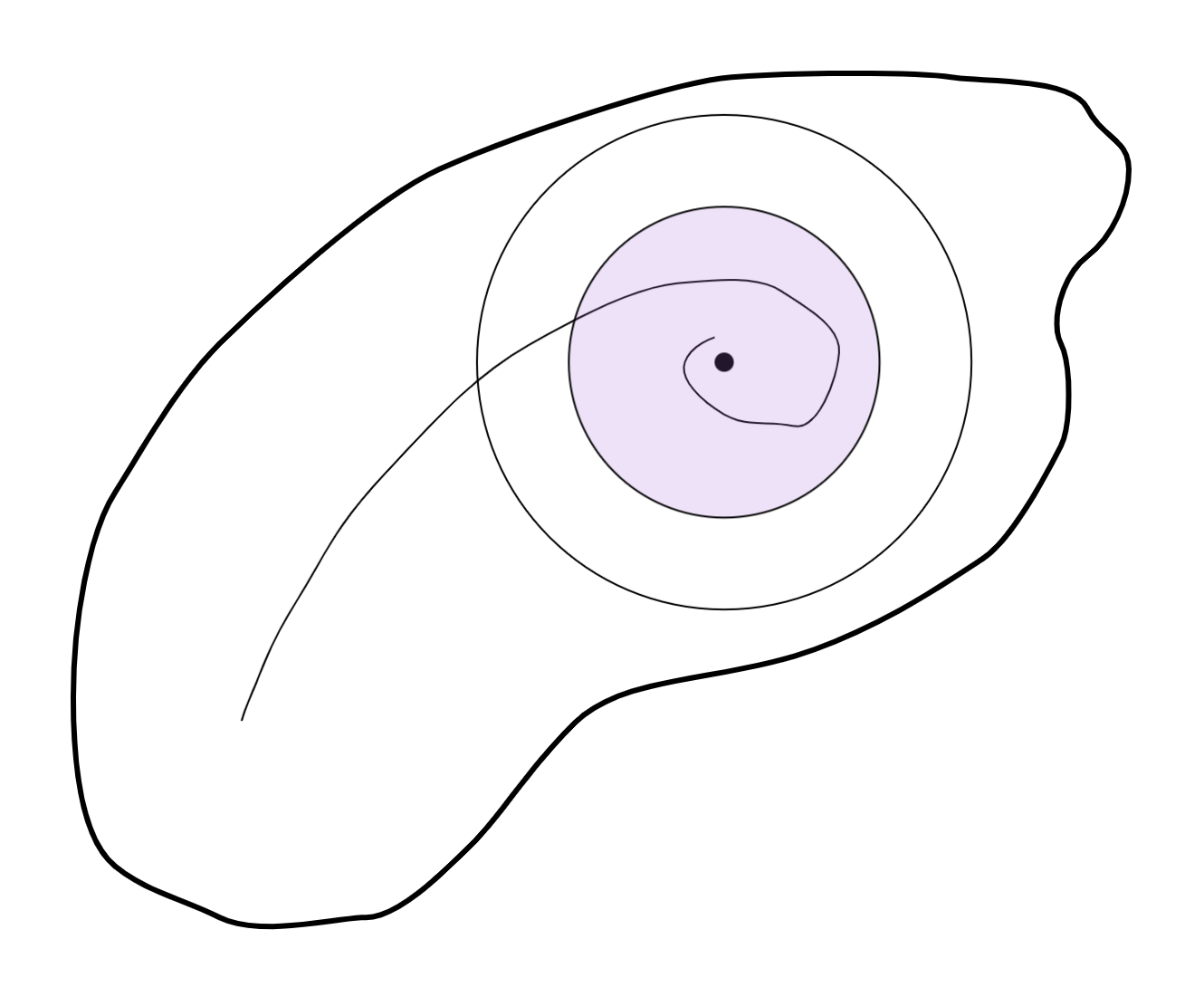}};
        \draw (-0.6, -1.4) node {$\color{blue} G$};
        \fill (-1.82,-1.15)  circle[radius=1pt];
        \draw (-1.55,-1.3)  node {$\LP{\x}$};

        \draw (-1.7, 0)  node {$\hat{T}_1$};
        \fill (-0.62, 0.58)  circle[radius=1pt];
        \draw (-0.3, 1.05)  node {$\hat{T}_2$};
        \fill (-0.13, 0.87)  circle[radius=1pt];

        \draw (1.3, 1.5)  node {$\color{blue} B_{\Delta_x}(a)$};
        \draw (0.7, 0.2)  node {$\color{blue} B_{(1 - \vartheta)\rho}(a)$};
        \draw (0.6, 0.9)  node {$\color{blue} a$};
    \end{tikzpicture}
    \caption{Dynamics of $\X^{0}$}
    \label{fig:X0_conv_pic}
    \end{figure} 

    \textit{Step 1. Convergence inside} $B_{\Delta_x}(a)$. By Assumption \ref{A-2}.\ref{A-2.Effect_pot_conv}, for any $x \in \overline{G}$, the flow $\phi^x$, generated by the effective potential $U_a$, converges towards $a$. Since, by Assumption \ref{A-1}.\ref{A-1.Lip}, both $\nabla V$ and $\nabla F$ are Lipschitz continuous and, by Assumption \ref{A-2}.\ref{A-2.G_open}, $\overline{G}$ is a compact set, there exists a uniform upper bound $\hat{T}_1$ for the time in which $\phi^x$ converges inside $B_{\Delta_x/2}(a)$.
    
    Let us use this fact to establish the time of convergence of $\X^{0}$ inside. Define $\eta_t := |\X^{0}_t - \phi^{\LP{\x}}_t|$. Applying Assumption \ref{A-1}.\ref{A-1.Lip}, we get    
    \begin{equation*}
    \begin{aligned}
        \eta_t &\leq \text{Lip}_{\nabla V}\int_0^t \eta_s \dd{s} + \text{Lip}_{\nabla F} \int_0^t \int_{\R^d}|(\X^{0}_s - z) - (\phi^{\LP{\x}}_s - a)|  \bmu^{0}_s(\dd{z})\dd{s} \\
        & \leq \text{Lip}_{\nabla V}\int_0^t \eta_s \dd{s} + \text{Lip}_{\nabla F} \left( \int_0^t \eta_s\dd{s} + \int_0^t \mathbb{W}_2(\bmu_s^{0}; \delta_a)\dd{s}\right).
    \end{aligned}
    \end{equation*}
    Then, by Grönwall's inequality:
    \begin{equation}\label{eq:aux:eta_t}
        \eta_t \leq \int_0^t \mathbb{W}_2(\bmu_s^{0}; \delta_a)\dd{s} \cdot \exp{(\text{Lip}_{\nabla V} + \text{Lip}_{\nabla F})t}.
    \end{equation}

    Since $\x \in \mathcal{C}^1_{\varepsilon, \rho}$, $\mathbb{W}_2(\bmu_0^{0}; \delta_a) \leq (1 + 2\varepsilon)\rho$. We can show, by \eqref{eq:aux:eta_t}, that if we take $\rhobar$ to be small enough, then $\mathbb{W}_2(\bmu_t^{0}; \delta_a) \leq (1 + 2.5\varepsilon)\rho$ at least for $t \leq \hat{T}_1$. Indeed, if we introduce a generic time $T^\prime := \inf\{t \geq 0: \mathbb{W}_2(\bmu^{0}_t; \delta_a) > (1 + 2.5\varepsilon)\rho\}$, we can express, using \eqref{eq:aux:eta_t}, for $t \leq T^\prime \wedge \hat{T}_1$:
    \begin{equation}\label{eq:aux:eta_t_2}
        \eta_t \leq (1 + 2.5\varepsilon)\rho \hat{T}_1 e^{(\text{Lip}_{\nabla V} + \text{Lip}_{\nabla F}) \hat{T}_1}.
    \end{equation}
    Since $\phi_t$ belongs to $\overline{G}$ for any $t \geq 0$, by inequality above, there exists a constant $C > 0$ such that $|\X_t^{0} - a| \leq C$ for any $t \leq T^{\prime}\wedge\hat{T}_1$. At the same time, by the definition of $\bmu^{0}$,
    \begin{equation*}
        \mathbb{W}_2(\bmu^{0}_t; \delta_a) \leq \frac{\TP{\x}}{\TP{\x} + t} \mathbb{W}_2(\OM{\x}; \delta_a) + \frac{t}{\TP{\x} + t} \mathbb{W}_2\left(\frac{1}{t}\int_0^t\delta_{\X_s^{0}}\dd{s}; \delta_a \right).
    \end{equation*}
    Therefore, for any $t \leq T^{\prime}\wedge\hat{T}_1$, 
    \begin{equation*}
        \mathbb{W}_2(\bmu^{0}_t; \delta_a) \leq (1 + 2\varepsilon)\rho + \frac{\hat{T}_1}{T_{\text{st}}^\rho + \hat{T}_1} C.
    \end{equation*}
    Note that, by Lemma~\ref{lm:conv_to_delta_a}, without loss of generality, we can increase $T_{\text{st}}^\rho$ if necessary such that $\frac{\hat{T}_1}{T_{\text{st}}^\rho + \hat{T}_1} C \leq 0.5 \epsilonbar \rhobar$. Thus, for any $t \leq T^{\prime}\wedge\hat{T}_1$:
    \begin{equation*}
        \mathbb{W}_2(\bmu^{0}_t; \delta_a) \leq (1 + 2.5\varepsilon)\rho.
    \end{equation*}
    That includes the time $t = T^{\prime}\wedge\hat{T}_1$. At the same time, by the definition of $T^\prime$, $\mathbb{W}_2(\bmu^{0}_{T^\prime}; \delta_a) > (1 + 2.5\varepsilon)\rho$ that means that $T^{\prime}\wedge\hat{T}_1 = \hat{T}_1$. 

    Now we can conclude that equation \eqref{eq:aux:eta_t_2} holds for any $t \leq \hat{T}_1$. Note that, without loss of generality, $\rhobar, \epsilonbar > 0$ can be assumed to be small enough such that $(1 + 2.5\epsilonbar)\rhobar < \Delta_x/2$, which gives $\eta_t < \Delta_x/2$ for any $t \leq \hat{T}_1$. That proves the uniform (in initial point $\LP{\x} \in G$) convergence inside $B_{\Delta_x}(a)$.

    \textit{Step 2. Convergence inside} $B_{(1 - \vartheta)\rho}(a)$. By Assumption \ref{A-2}.\ref{A-2.Strong_attraction_a}, in the set $B_{\Delta_x}(a)$ attraction forces towards the point $a$ prevail over the interaction forces. Firstly, without loss of generality, $\rhobar$ can be taken to be small enough such that $(1 + 3\epsilonbar)\rhobar < \Delta_\mu$, where $\Delta_\mu$ is defined in \ref{A-2}.\ref{A-2.Strong_attraction_a}. As was shown before, $\mathbb{W}_2(\bmu^{0}_{\hat{T}_1}; \delta_a) \leq (1 + 2.5\varepsilon)\rho$ and thus $\bmu^{0}_{\hat{T}_1} \in \mathbb{B}_{\Delta_\mu}(\delta_a)$. We can show that moreover $\bmu^{0}_{\hat{T}_1 + t} \in \mathbb{B}_{(1 + 3\varepsilon)\rho}(\delta_a)$ long enough such that in $\X^{0}$ converges inside $B_{(1 - \vartheta)\rho}(a)$.
    
    Let $T^{\prime \prime} := \inf\{t\geq 0: \mathbb{W}_2(\bmu^{0}_{\hat{T}_1 + t}; \delta_a) > (1 + 3\varepsilon)\rho, \text{ or } |\X_{\hat{T}_1 + t}^{0} - a| > \Delta_x\}$ and consider $\xi_t := |\X_{\hat{T}_1 + t}^{0} - a|^2$. By Assumption \ref{A-2}.\ref{A-2.Strong_attraction_a}, for any $t \leq T^{\prime \prime}$, its derivative is bounded by:
    \begin{equation*}
        \dot{\xi}_t \leq - 2 K_1 \xi_t + K_2\mathbb{W}_2(\bmu^{0}_{\hat{T}_1 + t}; \delta_a)\sqrt{\xi_t}. 
    \end{equation*}
    That guarantees that if $\xi_0 > \left(\frac{K_2}{K_1}(1 + 3\varepsilon)\rho \right)^2$ then we have the exponential decrease under the threshold
    \begin{equation*}
        \left(\frac{K_2}{K_1}(1 + 4\varepsilon)\rho \right)^2,
    \end{equation*}
    which in particular means that $T^{\prime \prime} = +\infty$.  
    
    Therefore, if, without loss of generality, we chose $\vartheta<1-\sqrt{\frac{K_2}{K_1}}$ and $\varepsilon<\frac{1}{4}\left(\sqrt{\frac{K_1}{K_2}}-1\right)$, then there exists a finite time $\hat{T}_2 := \inf \left\{t \geq 0: \xi_t\leq\left(\frac{K_2}{K_1}(1 + 4\varepsilon)\rho \right)^2\right\}$. To prove that $T^{\prime\prime}$ is infinite, we proceed by contradiction. Indeed, for any $t \leq T^{\prime \prime} \wedge \hat{T}_2$:
    \begin{equation*}
    \begin{aligned}
        \mathbb{W}_2(\bmu^{0}_{\hat{T}_1 + t}; \delta_a) &\leq  \mathbb{W}_2(\bmu^{0}_{\hat{T}_1}; \delta_a) + \frac{t}{\TP{\x} + \hat{T}_1 + t} \mathbb{W}_2\left(\frac{1}{t}\int_0^t\delta_{\X_{\hat{T}_1 + s}^{ 0}}\dd{s}; \delta_a \right) \\
        & \leq (1 + 2.5\varepsilon)\rho + \frac{\hat{T}_2}{T_{\text{st}}^\rho + \hat{T}_1 + \hat{T}_2} \Delta_x.
    \end{aligned}    
    \end{equation*}

    For any $\rho > 0$ we can choose, without loss of generality, $T_{\text{st}}^\rho$ to be big enough such that $\frac{\hat{T}_2}{T_{\text{st}}^\rho + \hat{T}_1 + \hat{T}_2} \Delta_x \leq 0.5\epsilonbar \rhobar$. That gives, for any $t \leq T^{\prime \prime} \wedge \hat{T}_2$:
    \begin{equation*}
        \mathbb{W}_2(\bmu^{0}_{\hat{T}_1 + t}; \delta_a) \leq (1 + 3\varepsilon)\rho.
    \end{equation*}
    Thus, as before, $\hat{T}_2 < T^{\prime \prime}$ or else we get a contradiction between the definition of $T^{\prime \prime}$ and the inequality above along with the fact that $\xi$ decreases for any $t \leq T^{\prime \prime} \wedge \hat{T}_2$.

    \textit{Step 3. Return of the occupation measure back inside} $\mathbb{B}_{(1 + 2\varepsilon)\rho}(\delta_a)$. The last time period can be found easily. Note that since $\X^{0}$ belongs to $B_{(1 - \vartheta)\rho}(a)$ at time $\hat{T}_1 + \hat{T}_2$, as well as $\bmu^{0}$ belongs to $\mathbb{B}_{(1 + 3\varepsilon)\rho}(\delta_a)$, then by Assumption \ref{A-2}.\ref{A-2.Strong_attraction_a}, $\X^{0}_t$ will not leave $B_{(1 - \vartheta)\rho}(a)$ for any $t \geq \hat{T}_1 + \hat{T}_2$. Using this fact and estimations that we had on $\bmu^{0}$ for time intervals $[0, \hat{T}_1]$ and $[\hat{T}_1, \hat{T}_2]$, we can provide the following bound:
    \begin{equation*}
    \begin{aligned}
        \mathbb{W}_2(\bmu^{0}_{\hat{T}_1 + \hat{T}_2 + t}; \delta_a) & \leq \frac{\TP{\x}}{\TP{\x} + \hat{T}_1 + \hat{T}_2 + t} (1 + 2\varepsilon) \rho + \frac{ \hat{T}_1 }{\TP{\x} + \hat{T}_1 + \hat{T}_2 + t} (1 + 2.5\varepsilon)\rho \\
        &\quad + \frac{\hat{T}_2 }{\TP{\x} + \hat{T}_1 + \hat{T}_2 + t} (1 + 3\varepsilon)\rho + \frac{t}{\TP{\x} + \hat{T}_1 + \hat{T}_2 + t} (1 - \vartheta) \rho.
    \end{aligned}
    \end{equation*}
    
    After regrouping its terms, the expression above is equal to:
    
    \begin{equation*}
    \begin{aligned}
        \rho \; + \; & \frac{2\TP{\x}\varepsilon + 2.5 \hat{T}_1 \varepsilon  + 3\hat{T}_2\varepsilon - t\vartheta }{\TP{\x} + \hat{T}_1 + \hat{T}_2 + t} \rho \\
        &\leq \rho + \frac{2(\TP{\x} + \hat{T}_1  + \hat{T}_2 + t) + \hat{T}_2 + 0.5 \hat{T}_1 - t\left(2 + \frac{\vartheta}{2\varepsilon} \right) }{\TP{\x} + \hat{T}_1 + \hat{T}_2 + t}\varepsilon\rho.
    \end{aligned}
    \end{equation*}
Thus, we have the following bound:    
    \begin{equation*}
    \begin{aligned}
        \mathbb{W}_2(\bmu^{0}_{\hat{T}_1 + \hat{T}_2 + t}; \delta_a) \leq (1 + 2\varepsilon) \rho + \frac{\hat{T}_2 + 0.5\hat{T}_1  - t\left(2 + \frac{\vartheta}{2\varepsilon} \right)}{T_{\text{st}}^\rho + \hat{T}_1 + \hat{T}_2 + t} \varepsilon \rho.
    \end{aligned}
    \end{equation*}
    
    We just need to choose $\hat{T}_3$ big enough such that $\Big(\hat{T}_2 + 0.5 \hat{T}_1 - \hat{T}_3 \left(2 + \frac{\vartheta}{2\varepsilon} \right) \Big)< 0$. We complete the proof by choosing $T_1^\rho = \hat{T}_1 + \hat{T}_2 + \hat{T}_3$.

\subsubsection{Proof of Lemma \ref{lm:tau_0_<=T1}: Attraction of the stochastic process towards~\texorpdfstring{$a$}{a}}

    According to Lemma \ref{lm:X0_conv}, there exists a finite time $T_1^\rho$ such that, for any $\varepsilon, \vartheta$ and for any $\x \in \mathcal{C}^1_{\varepsilon, \rho}$, the deterministic process $\X^{0}_{T_1^\rho} \in B_{(1 - \vartheta)\rho}(a)$, where $\mathcal{C}^1_{\varepsilon, \rho}$ is defined in \eqref{eq:def:set_C_1}.
    
    Let us define $$\Psi^{x_0} := \{\psi \in C([0, T_{1}^\rho]; \R^d), \psi(0) = x_0, \psi(s) \in \Cl(G \setminus B_\rho(a)) \; \forall s \leq T_1^\rho\}.$$ The following inclusion of the events holds: $\{\tau_0 \geq T_1^\rho\} \subset \{\X^{\x, \sigma} \in \Psi^{\LP{\x}}\}$ (the definition of $\tau_0$ was presented on page~\pageref{eq:def_of_tau_theta_0}). Let $\Psi = \bigcup_{x_0 \in G} \Psi^{x_0}$. Note that $\Psi$ is a closed set and the following enlargement of $\mathcal{C}^1_{\varepsilon, \rho}$: $\overline{\mathcal{C}}^1_{\varepsilon, \rho}:= \{\x \in \mathfrak{X}: T_{\text{st}}^\rho \leq \TP{\x} \leq \infty, \LP{\x} \in \overline{G}, \text{ and}, \OM{\x} \in \mathbb{B}_{(1 + 2\varepsilon)\rho}(\delta_a)\}$ is a compact set by Lemma \ref{lm:K_is_compact}. By Lemma~\ref{lm:LDP_compact_init_cond} and the inclusion of the events, we get
    \begin{equation*}
    \begin{aligned}
        \limsup_{\sigma \to 0} \frac{\sigma^2}{2} &\log \sup_{\x \in \mathcal{C}^1_{\varepsilon, \rho}} \Prob_{\x}(\tau_0 > T_1^\rho) \\
        &\leq \limsup_{\sigma \to 0} \frac{\sigma^2}{2} \log \sup_{\x \in \overline{\mathcal{C}}^1_{\varepsilon, \rho}} \Prob_{\x}(\X^{\sigma} \in \Psi ) \leq - \inf_{\x \in \overline{\mathcal{C}}^1_{\varepsilon, \rho}} \inf_{\phi \in \Psi} I_{T_1^\rho}^{\x}(\phi).        
    \end{aligned}
    \end{equation*}
    
    What is left to prove is that 
    \begin{equation}\label{eq:aux:I_T>0}
        \inf_{\x \in \overline{\mathcal{C}}^1_{\varepsilon, \rho}} \inf_{\phi \in \Psi} I_{T_1^\rho}^{\x}(\phi) > 0.
    \end{equation}
    We can not apply Corollary \ref{cor:inf_I_over_compact} directly since $\Psi$ is not necessarily compact. Let $\Psi^{\x}_1 := \{\psi \in \Psi: I_{T_1^\rho}^\x(\psi) \leq 1\}$. Of course, if all $\Psi^{\x}_1$ are empty, then expression in \eqref{eq:aux:I_T>0} is indeed strictly greater than 0. If it is not the case, then the sets $\{\Psi^\x_1\}_{\x \in \overline{\mathcal{C}}^1_{\varepsilon, \rho}}$ satisfy conditions of Lemma \ref{lm:A_x_compact}, and, therefore, union $\Psi_1 := \bigcup_{\x \in \overline{\mathcal{C}}^1_{\varepsilon, \rho}} \Psi^\x_1$ is precompact, and $\Psi_2 = \overline{\Psi_1}$ is a compact set. Thus, by Corollary \ref{cor:inf_I_over_compact}, there exists some $\x^* \in \overline{\mathcal{C}}^1_{\varepsilon, \rho}$, $\phi^* \in \Psi_2$ such that 
    \begin{equation*}
        \inf_{\x \in \overline{\mathcal{C}}^1_{\varepsilon, \rho}} \inf_{\phi \in \Psi_2} I_{T_1^\rho}^{\x}(\phi) = I_{T_1^\rho}^{\x^*}(\phi^*).
    \end{equation*}
    By Lemma \ref{lm:X0_conv}, for any $\x \in \overline{\mathcal{C}}^1_{\varepsilon, \rho}$, the corresponding deterministic trajectories converge inside $B_{(1 - \vartheta)\rho}(a)$ in time $T_1^\rho$ for some $\vartheta$. Thus, for any $\psi \in \Psi$: $\|\psi - \X^{0}\|_{\infty} > \vartheta$ for any $\x \in \overline{\mathcal{C}}^1_{\varepsilon, \rho}$. The same bound obviously holds for $\Psi_2$ since it is a closure of some subset of $\Psi$. Since the deterministic trajectories are the unique functions in $C([0, T_1^\rho]; \R^d)$ for which $I^\x_{T_1^\rho} (\X^{0}) = 0$, we conclude that
    \begin{equation*}
        \inf_{\x \in \overline{\mathcal{C}}^1_{\varepsilon, \rho}} \inf_{\phi \in \Psi} I_{T_1^\rho}^{\x}(\phi) > I_{T_1^\rho}^{\x^*}(\phi^*) > 0.
    \end{equation*}
    And this concludes the proof of the lemma.

\subsubsection{Proof of Lemma \ref{lm:rest_inside_annulus_prob}: Behavior in the annulus between \texorpdfstring{$B_{\rho}(a)$}{Brho(a)} and \texorpdfstring{$\partial G$}{boundary of G}}

   The idea of the proof is to show that, since $T_1^\rho$ represents the time in which the noiseless process converges inside $B_{\rho}(a)$, after each time interval of length $T_1^\rho$ it should be more and more unlikely that diffusion $\X^\sigma$ did not follow the deterministic path even once.

    We introduce the following set of functions whose path stay inside the annulus $\Cl(G\setminus B_\rho(a))$:
    \begin{equation*}
        \Psi_t := \Big\{\psi \in C([0, t]; \R^d): \psi_s \in \Cl(G\setminus B_\rho(a))  \text{ and } \frac{1}{s} \int_0^s \delta_{\psi_{u}} \dd{u} \in \mathbb{B}_{(1 + 2\varepsilon)\rho}(\delta_a)\; \forall s \leq t \Big\}.
    \end{equation*}
    Obviously, for any $\x \in \mathcal{C}^1_{\varepsilon, \rho}$, the following inclusion of events takes place $\{t < \tau_0 < \gamma \} \subset \{\X^{\sigma} \in \Psi_t\}$. By Lemma \ref{lm:LDP_compact_init_cond} for any $t<\infty$:
    \begin{equation*}
        \lim_{\sigma \to 0} \frac{\sigma^2}{2} \log \sup_{\x \in \mathcal{C}^1_{\varepsilon, \rho}} \Prob_{\x}(\X^\sigma \in \Psi_t) \leq - \inf_{\psi \in \Psi_{t}} I_{t}(\psi),
    \end{equation*}
    where $I_{t}(\psi) := \inf_{\x \in \mathcal{C}^1_{\varepsilon, \rho}} I_t^{\x}(\psi)$. Therefore, it is enough to show that $\lim_{t \to \infty} \inf_{\psi \in \Psi_t} I_{t} (\psi) = \infty$. Let us assume that it is not true and there exists $M < \infty$ such that for any $n$ there exists some function $\psi^n \in \Psi_{n T_1^\rho}$ such that $I_{nT_1^\rho}(\psi^n) \leq M$, where $T_1^\rho$ is defined by Lemma \ref{lm:tau_0_<=T1}. Then, we can separate the path of $\psi^{n}$ into $n$ parts and establish the following lower bound:
    
    \begin{equation}\label{eq:aux:bound_by_M}
        M \geq I_{nT_1^\rho}(\psi^n) \geq \sum_{k=0}^{n - 1} I^{\x_k^n}_{T_1^\rho}(\psi^{n, k}) \geq n \min_{k \leq n} I_{T_1^\rho}^{\x_k^n} (\psi^{n, k}),
    \end{equation}
    where $\x^n_k \in \mathfrak{X}$ are such that:
    \begin{equation*}
    \begin{aligned}
        \TP{\x_k^n} &= \TP{\x_0} + k T_1^\rho, \\
        \LP{\x_k^n} &= \psi^{n, k - 1}(k T_1^\rho), \\
        \OM{\x_k^n} &= \frac{\TP{\x_0}}{\TP{\x_0} + k T_1^\rho} \OM{\x_0} + \frac{1}{\TP{\x_0} + k T_1^\rho}\int_0^{k T_1^\rho} \delta_{\psi_s^n} \dd{s}; 
    \end{aligned}
    \end{equation*}
    and $\psi^{n, k} \in C([0, T_1^\rho]; \R^d)$ are the corresponding peaces of $\psi^n$ of length $T_1^\rho$, i.e., $\psi^{n, k}(s) = \psi^n(kT_1^\rho + s)$ for $s \in [0, T_1^\rho]$. 
    
    For equation \eqref{eq:aux:bound_by_M} to hold for any $n$ it is required that there is a sequence of functions $\{\psi^{n, k_n}\}_{n = 1}^\infty$ such that $I^{\x^n_{k_n}}_{T_1^\rho} (\psi^{n, k_n}) \to 0$. Thus, after some $n_0$ all of these functions belong to the set 
    \begin{equation*}
        \Phi = \bigcup_{\x \in \mathcal{C}^1_{\varepsilon, \rho}} \Phi^{\x},
    \end{equation*}
    where $\Phi^{\x} := \{f \in C([0, T_1^\rho]): I^{\x}(f) \leq 1\}$. It also means that $\inf_{\phi \in \Phi} I_{T_1^\rho} (\phi) = 0$. We use the same logic as in the proof of Lemma~\ref{lm:tau_0_<=T1} and Lemma \ref{lm:theta_0_>_T2}. If all the sets $\{\Phi^\x\}_{\x \in \mathcal{C}^1_{\varepsilon, \rho}}$ are empty, then we get the contradiction with the fact that infimum of rate functions over this set should be equal to 0. In the other case, we can apply Lemma~\ref{lm:A_x_compact} to conclude that $\Phi$ is precompact set, which makes $\Phi_1 := \overline{\Phi}$ to be a compact set. By Corollary \ref{cor:inf_I_over_compact}, there exist $x^* \in \mathcal{C}^1_{\varepsilon, \rho}$ and $\phi^* \in \Phi_1$ such that
    \begin{equation*}
        \inf_{\phi \in \Phi_1} \inf_{\x \in \mathcal{C}^1_{\varepsilon, \rho}} I^\x_{T_1^\rho}(\phi) = I^{\x^*}_{T_1^\rho}(\phi^*).
    \end{equation*}
    By Lemma \ref{lm:conv_to_delta_a}, for any $\x \in \mathcal{C}^1_{\varepsilon, \rho}$, the deterministic processes $\X^{0}$ converge inside $B_{(1 - \vartheta)\rho}(a)$ for some $\vartheta > 0$ in time $T_1^\rho$. Thus, for any $\phi \in \Psi_{T_1^\rho}$ (and as a consequence in $\Phi_1$): $\|\phi - \X^{0}\|_{\infty} \geq \vartheta$. That means that $I^{\x^*}_{T_1^\rho}(\phi^*)$ is strictly positive, which contradicts existence of $M < \infty$ in \eqref{eq:aux:bound_by_M} and, thus, proves the lemma.

\subsubsection{Proof of Lemma \ref{lm:gamma_gr_tau}: Stabilization of the occupation measure}

The proof of this result requires an additional Lemma \ref{lm:theta_0_>_T2}. This lemma claims that for any choice of constant $T_2 > 0$ we can make $\sigma$ small enough such that with high probability the stochastic process $\X^{\sigma}$ spends inside a small neighbourhood of the point of attraction $a$ at least time $T_2$.

We remind that $\theta_{0} := \inf\{t:\; \X_t^{\sigma} \in S_{(1 + \varepsilon)\rho}(a)\}$ is the time that the process starting in some point inside $B_\rho(a)$ spends inside the ball $B_{(1 + \varepsilon)\rho}(a)$. Let 
\begin{equation*}
    \mathcal{C}^4_{\varepsilon, \rho} := \{ \x \in \mathfrak{X}: T^\rho_{\text{st}} \leq \TP{\x} \leq \infty;  \OM{\x} \in \mathbb{B}_{(1 + 2\varepsilon)\rho}(\delta_a); \LP{\x} \in B_{\rho}(a)\} 
\end{equation*}

Then the following lemma holds.

\begin{lemma}\label{lm:theta_0_>_T2}
    There exist $\epsilonbar, \rhobar > 0$ such that or any $0 < \rho < \rhobar$, for any $0 < \varepsilon < \epsilonbar$ and for any constant $T_2 > 0$ the following limit holds
    \begin{equation}
         \lim_{\sigma \to 0}\sup_{\x \in \mathcal{C}^4_{\varepsilon, \rho}}\Prob_{\x} (\theta_0 < T_2 ) = 0.
    \end{equation}
    
\end{lemma}

\begin{proof}
    The proof of the following lemma follow the same logic as the one of Lemma~\ref{lm:tau_0_<=T1}.
    
    By Lemma \ref{lm:X0_conv}, the deterministic process $\X^{0}$ starting with any initial conditions $\x \in \mathfrak{X}$: $T^\rho_{\text{st}} \leq \TP{\x} \leq \infty$, $\LP{\x} \in B_{\rho}(a)$, and $\OM{\x} \in \mathbb{B}_{(1 + 2\varepsilon)\rho}(\delta_a)$ stays inside $B_{\rho}(a)$ for any $t \geq 0$. 
    
    Fix $T_2$ and define $$\Psi^{y}:= \{\psi \in C([0, T_2]; \R^d): \psi(0) = y, |\psi(s) - a| \geq (1 + \varepsilon)\rho, \text{ for some } s \leq T_2\}.$$ Obviously, by definition of $\Psi^{y}$, the following inclusion holds: $\{\theta_0 \geq T_2\} \subset \{\X^{\sigma} \in \Psi^{\LP{\x}}\}$. Let $\Psi := \Cl(\bigcup_{y \in B_{\rho}(a)} \Psi^{y})$. Note that, by Lemma \ref{lm:K_is_compact}, the set $\mathcal{C}^4_{\varepsilon, \rho}$ is compact. Then, by Lemma \ref{lm:LDP_compact_init_cond}, we get
	\begin{equation*}
			\limsup_{\sigma \to 0} \frac{\sigma^2}{2} \log \sup_{\x \in \mathcal{C}^4_{\varepsilon, \rho}} \Prob_{\x} (\theta_0 > T_2) \leq - \inf_{\x \in \mathcal{C}^4_{\varepsilon, \rho}} \inf_{\phi \in \Psi} I_{T_2}^\x(\phi).
	\end{equation*}
	
	We have to show now that
	\begin{equation*}
		\inf_{\x \in \mathcal{C}^4_{\varepsilon, \rho}} \inf_{\phi \in \Psi} I_{T_2}^\x (\phi) > 0.
	\end{equation*}
	
	In order to apply Corollary \ref{cor:inf_I_over_compact}, consider $\Psi_1^\x := \{\psi \in \Psi: I_{T_2}^\x(\psi) \leq 1\}$. If all the sets are empty, then the inequality above is trivially satisfied. If not, the family of sets $\{\Psi_1^\x\}_{\x \in \mathcal{C}^4_{\varepsilon, \rho}}$ satisfies conditions of Lemma \ref{lm:A_x_compact}. Thus, the union $\bigcup_{\x \in \mathcal{C}^4_{\varepsilon, \rho}} \Psi_1^\x$ is precompact and $\Psi_2 := \overline{\Psi_1}$ is a compact set. Thus, by Corollary \ref{cor:inf_I_over_compact}, there exist some $\x^* \in \mathcal{C}^4_{\varepsilon, \rho}$ and $\phi^* \in \Psi_2$ such that 
	\begin{equation*}
		\inf_{\x \in \mathcal{C}^4_{\varepsilon, \rho}} \inf_{\phi \in \Psi} I_{T_2}^\x (\phi) = I_{T_2}^{\x^*} (\phi^*).
	\end{equation*}
    As was pointed out before, $\X^{0}$ starting with initial conditions $\x \in \mathcal{C}^4_{\varepsilon, \rho}$ stays inside $B_{\rho}(a)$ for any $0 \leq t \leq T_2$. By definition of $\Psi$, any function $\X^{0}$ has a positive distance of at least $\varepsilon \rho$ to $\Psi$. The same holds for $\Psi_2$. Since $\X^{0}$ are unique minimizers of $I_{T_2}^\x$ for corresponding initial conditions $\x$ and $I_{T_2}^\x(\X^{0}) = 0$,
	\begin{equation*}
		I_{T_2}^{\x^*} (\phi^*) > 0.
	\end{equation*}
	This completes the proof.
\end{proof}

Now we are ready to prove Lemma \ref{lm:gamma_gr_tau} itself. 

\begin{proof}[Proof of Lemma \ref{lm:gamma_gr_tau}]
    \textit{Dynamics of $\bmu_t^\sigma$}. We separate the path of $\bmu_t^\sigma$ into two parts: when it belongs to the ball of radius $\rho$ and when big excursions occur. Let 
    \begin{equation}\label{eq:def:set_C_4}
        \x \in \mathcal{C}^2_\rho := \{\x \in \mathfrak{X}: T_{\text{st}}^\rho \leq \TP{\x} \leq \infty, \OM{\x} \in \mathbb{B}_{\rho}(\delta_a), \text{ and } \LP{\x} \in B_{\rho}(a)\}.    
    \end{equation}
    Let $\bmu_0 = \OM{\x}$ and $t_0 = \TP{\x}$. Consider the following equations for $t \in [\theta_{m}, \tau_{m + 1}]$ (see page \pageref{eq:def_of_tau_theta_0} for definition).
    \begin{equation*}
            \mathbb{W}_2(\bmu_t^\sigma; \delta_a) \leq \frac{t_0}{t_0 + t} \mathbb{W}_2(\bmu_0; \delta_a) + \frac{t}{t_0 + t} \mathbb{W}_2 \left(\frac{1}{t} \int_0^t\delta_{X_s} \dd{s}; \delta_a \right).
    \end{equation*}
By definition of $\tau_k$ and $\theta_k$, the expression above is less or equal to: 
    \begin{equation*}
        \begin{aligned}
            & \frac{t_0}{t_0 + t} \mathbb{W}_2(\bmu_0; \delta_a) +  \sum_{k=1}^{m - 1} \frac{\tau_{k + 1} - \theta_k}{t_0 + t} \mathbb{W}_2 \left(\frac{1}{\tau_{k + 1} - \theta_k} \int_{\theta_k}^{\tau_{k + 1}}\delta_{X_s} \dd{s}; \delta_a \right) \\
            &\quad + \sum_{k = 1}^{m} \frac{\theta_{k} - \tau_k}{t_0 + t} \mathbb{W}_2 \left(\frac{1}{\theta_{k} - \tau_k} \int_{\tau_k}^{\theta_{k}} \delta_{X_s} \dd{s}\!; \delta_a \right) + \frac{t - \theta_{m}}{t_0 + t} \mathbb{W}_2\left( \frac{1}{t - \theta_m} \int_{\theta_{m}}^{t} \delta_{X_s} \dd{s}\!; \delta_a \right).
        \end{aligned}
    \end{equation*}
    That gives us:
    
    \begin{equation*}
        \mathbb{W}_2(\bmu_t^\sigma; \delta_a) \leq \frac{t_0}{t_0 + t} \rho + \left(\sum_{k=1}^{m - 1}\frac{\tau_{k + 1} - \theta_k}{t_0 + t} + \frac{t - \theta_{m}}{t_0 + t} \right)\!\!R + \left(\sum_{k = 1}^{m} \frac{\theta_{k} - \tau_k}{t_0 + t} \right)\!(1 + \varepsilon)\rho,
    \end{equation*}
    
    where $R := \sup_{z \in \partial G} |z - a|$ is the maximal distance between the point $a$ and the frontier of $G$. Let us define $\mathcal{T}_{\text{out}}(m) := \sum_{k=1}^m(\tau_{k+1} - \theta_k)$ that is the total amount of time spent significantly outside of the ball $B_{\rho}(a)$ after $m$ full exits, as well as $\mathcal{T}_{\text{in}}(m) := \sum_{k=1}^m(\theta_{k} - \tau_k)$. Taking into account this notation, for $t \in [\theta_{m}, \tau_{m + 1}]$, we express
    \begin{equation}\label{eq:aux:mu_t_upper_bound}
    \begin{aligned}
         \mathbb{W}_2(\bmu_t^\sigma; \delta_a) &\leq \frac{t_0}{t_0 + t} \rho + \frac{\mathcal{T}_{\text{out}}(m)}{t_0 + t}R + \frac{t - \mathcal{T}_{\text{out}}(m - 1)}{t_0 + t}(1 + \varepsilon)\rho \\
         & \leq (1 + \varepsilon)\rho  + \frac{\mathcal{T}_{\text{out}}(m)}{t_0 + t}R \leq (1 + \varepsilon)\rho  + \frac{\mathcal{T}_{\text{out}}(m)}{\mathcal{T}_{\text{in}}(m)}R.
    \end{aligned}
    \end{equation}
    Here we emphasize that $\bmu_t^\sigma$, $\tau_k$, $\theta_k$ and, as a consequence, $\mathcal{T}_{\text{out}}(m)$ depend also on elementary event $\omega$. Now, in order to prove that $\gamma > \tau_G^\sigma \wedge \exp{\frac{2(H + 1)}{\sigma^2}}$ with high probability, it suffices to show that $\mathcal{T}_{\text{out}}(m)$ constitutes such a small part of $t$ that it will not be able to move the occupation measure $\bmu_t^\sigma$ significantly away from $\delta_a$. 
    
    \textit{Control of} $\mathcal{T}_{\text{in}}$. By Lemma \ref{lm:rest_inside_annulus_prob} if we choose $\sigma$ small enough, there exists $T_1^\rho > 0$ big enough such that
    \begin{equation}\label{eq:aux:tau_0>T_1}
        \sup_{\x \in \mathcal{C}^2_\rho} \Prob_\x (\tau_0 > T_1^\rho, \gamma > \tau_0) < \exp{-\frac{8 (H + 1)}{\sigma^2}}.
    \end{equation}
    
    Define $T_2^\rho > 0$ as a number that is big enough such that:
    \begin{equation}\label{eq:aux:T_1/T_2R}
        \frac{T_1^\rho}{T_2^\rho} R < \varepsilon \rho. 
    \end{equation}
    
    We recall that Lemma~\ref{lm:theta_0_>_T2} establishes the following asymptotic behaviour for small $\sigma$:
    \begin{equation}\label{eq:aux:theta_0>T_2}
        \sup_{\x \in \mathcal{C}^3_{\varepsilon, \rho}}\Prob_{\x} (\theta_0 < T_2^\rho) = o_\sigma,
    \end{equation}
    where $o_\sigma$ is an infinitesimal w.r.t. $\sigma$. Thus, we can get the following lower bound for time spent inside the ball $B_{(1 + \varepsilon) \rho}(a)$. For any $\x \in \mathcal{C}^2_\rho$,
    \begin{equation}\label{eq:aux:T_in<m/2T2}
        \begin{aligned}
            \Prob_{\x} &(\mathcal{T}_{\text{in}}(m) < \frac{m}{2}T_2^\rho, \gamma \geq \tau_{m + 1}) \\
            &\leq \Prob_{\x} ( \#\{i \leq m: \theta_{i} - \tau_i < T_2^\rho(\sigma)\} > \frac{m}{2}, \gamma \geq \tau_{m + 1}) \\
            & \leq \sum_{k = \lceil \frac{m}{2} \rceil }^m \sum_{(i_1, \dots, i_k)}\!\! \Prob_{\x}\! \left( \bigcap_{j} \{\theta_{i_j} - \tau_{i_j} < T_2^\rho(\sigma)\}, \gamma \geq \tau_{m + 1} \right),
        \end{aligned}
    \end{equation}
    where the summation is taken with respect to all tuples of the form $(i_1,~ \dots, i_k)$ for $i_1 < \dots < i_k$. Note that we can roughly estimate the number of such tuples to be less than $2^m$. We also emphasize that each respective probability can be expressed as
    \begin{equation*}
    \begin{aligned}
        &\Prob_{\x}\! \left(\! \bigcap_{j = 1}^k \{\theta_{i_j} - \tau_{i_j} < T_2^\rho\}, \gamma \geq \tau_{m + 1} \right) \\
        &= \prod_{j = 1}^k \Prob_{\x} \left( \theta_{i_j} - \tau_{i_j} < T_2^\rho, \gamma \geq \tau_{m + 1} \Big| \bigcap_{z \leq j} \{\theta_{i_m} - \tau_{i_z} < T_2^\rho\} \! \right)\leq \left(\sup_{\x \in \mathcal{C}^3_{\varepsilon, \rho}}\!\!\! \Prob_{\x} \big(\theta_0 < T_2^\rho\big) \right)^k \!\!\!.
    \end{aligned}
    \end{equation*}
    
    By equation \eqref{eq:aux:theta_0>T_2} we conclude the following bound for probability \eqref{eq:aux:T_in<m/2T2}. For any $m \in \N $ and $\x \in \mathcal{C}^2_\rho$,
    \begin{equation}\label{eq:aux:Tin_final_bound}
    \begin{aligned}
        \Prob_{\x} &(\mathcal{T}_{\text{in}}(m) < \frac{m}{2}T_2^\rho, \gamma \geq \tau_{m + 1}) \\
        &\leq \sum_{k = \lceil \frac{m}{2} \rceil }^m 2^m \left(\sup_{\x \in \mathcal{C}^3_{\varepsilon, \rho}} \Prob_{\x} \big(\theta_0 < T_2^\rho\big) \right)^k \leq 2^m\frac{o_{\sigma}^{\lceil\frac{m}{2}\rceil}(o_\sigma^{\lceil\frac{m}{2}\rceil}- 1)}{o_\sigma - 1} \leq \frac{o_{\sigma}^{\lceil\frac{m}{2}\rceil}}{1 - o_\sigma}.  
        \end{aligned}
    \end{equation}
    
    \textit{Control of the number of excursion}. Inequality \eqref{eq:aux:Tin_final_bound} also provides us with the following upper bound for $\tau_m$. Let $m^* := \lceil\frac{2}{T_2^\rho} \rceil\exp{\frac{2(H + 1)}{\sigma^2}}$ and consider
    \begin{equation}\label{eq:aux:m_final_bound}
        \begin{aligned}
            \Prob_\x \left(\tau_{m^*} < \exp{\frac{2(H + 1)}{\sigma^2}} \right) &\leq \Prob_\x \left(\mathcal{T}_{\text{in}}(m^*) < \exp{\frac{2(H + 1)}{\sigma^2}} \right)\\
            & \leq \frac{o_{\sigma}^{\lceil\frac{m^*}{2}\rceil}}{1 - o_\sigma}.
        \end{aligned}
    \end{equation}
    Note that $m^*$ tends to infinity with $\sigma \to 0$. It means that the probability in \eqref{eq:aux:m_final_bound} tends to zero, which provides us with an asymptotic upper bound for $\tau_{m}$ knowing that $m$ is large enough. Inequality \eqref{eq:aux:m_final_bound} in particular means that the probability that there were more than $m^*$ (that grows exponentially fast with $\sigma$) excursions before time $\exp{\frac{2(H + 1)}{\sigma^2}}$ is very small.
    
    \textit{Control of $\mathcal{T}_{\text{out}}$}. For time spent significantly outside of $B_{\rho}(a)$ --  $\mathcal{T}_{\text{out}}(m)$ -- we provide the following simple bound. For any $\x \in \mathcal{C}^1_{\varepsilon, \rho}$,
    \begin{equation} \label{eq:aux:T_out_final_bound}
    \begin{aligned}
        \Prob_{\x}\big(\mathcal{T}_{\text{out}}(m) > m T_1^\rho, \gamma \geq \tau_m \big) &\leq \Prob_{\x} \big(\{\exists i \leq m : \tau_i - \theta_i > T_1^\rho \}, \gamma \geq \tau_m \big) \\
        & \leq m\sup_{\x \in \mathcal{C}^1_{\varepsilon, \rho}} \Prob_{\x} \big( \tau_0 > T_1^\rho \big) \\
        &\leq m \exp{-\frac{8(H + 1)}{\sigma^2}},
    \end{aligned}
    \end{equation}
    where we obtain the last inequality by \eqref{eq:aux:tau_0>T_1}.
    
    \textit{Control of $\gamma$}. Note that, by definition of $\gamma$, $\mathbb{W}_2(\bmu_\gamma^\sigma; \delta_a) \geq (1 + 2\varepsilon)\rho$ a.s. Consider the following inequalities. For any $\x \in \mathcal{C}^2_\rho$, where $\mathcal{C}^2_\rho$ is defined in \eqref{eq:def:set_C_4}, we have:
    \begin{equation*}
        \begin{aligned}
            \Prob_{\x} &\left(\gamma \leq \tau_{G}^\sigma \wedge \exp{\frac{2(H + 1)}{\sigma^2}} \right) \\
            &= \Prob_{\x} \left(\mathbb{W}_2(\bmu_\gamma^\sigma; \delta_a) \geq (1 + 2\varepsilon)\rho ,\gamma \leq \tau_{G}^\sigma \wedge \exp{\frac{2(H + 1)}{\sigma^2}} \right) \\
            &\leq \sum_{m = 1}^{\infty} \Prob_\x \left(\mathbb{W}_2(\bmu_\gamma^\sigma; \delta_a) \geq (1 + 2 \varepsilon)\rho, \theta_m \leq \gamma \leq \tau_{m + 1} \wedge \exp{\frac{2(H + 1)}{\sigma^2}}\right) \\
            &\leq \sum_{m = 1}^{m^*} \Prob_\x \left(\mathbb{W}_2(\bmu_\gamma^\sigma; \delta_a) \geq (1 + 2\varepsilon)\rho, \theta_m \leq \gamma \leq \tau_{m + 1} \wedge \exp{\frac{2(H + 1)}{\sigma^2}} \right) \\
            & \quad\quad + \Prob_\x \left(\tau_{m^*} \leq \exp{\frac{2(H + 1)}{\sigma^2}} \right) =: A + B.
        \end{aligned}
    \end{equation*}
    Let us first deal with the term $B$. By \eqref{eq:aux:m_final_bound}, we have
    \begin{equation*}
        \begin{aligned}
            B \leq  \frac{o_{\sigma}^{\lceil\frac{m}{2}\rceil}}{o_\sigma - 1} \xrightarrow[\sigma \to 0]{} 0. 
        \end{aligned}
    \end{equation*}
    Consider now $A$. Separate each probability inside the sum the following way
    \begin{equation*}
    \begin{aligned}
       &\Prob_\x \left(\mathbb{W}_2(\bmu_\gamma^\sigma; \delta_a) \geq (1 + 2\varepsilon)\rho, \theta_m \leq \gamma \leq \tau_{m + 1} \wedge \exp{\frac{2(H + 1)}{\sigma^2}} \right) \\
        & \leq \sum_{k = m}^{m^*} \Prob_\x \Big(\mathbb{W}_2(\bmu_\gamma^\sigma; \delta_a) \geq (1 + 2 \varepsilon)\rho, \mathcal{T}_{\text{out}}(k) < k  T_1^\rho, \mathcal{T}_{\text{in}}(k) > \frac{k}{2}T_2^\rho, \gamma \in [\theta_k, \tau_{k+1}]\Big) \\
        & \quad + \sum_{k = m}^{m^*} \Prob_{\x} (\mathcal{T}_{\text{out}}(k) \geq k T_1^\rho, \gamma \geq \tau_k) + \sum_{k = m}^{m^*} \Prob_{\x}(\mathcal{T}_{\text{in}}(k) \leq \frac{k}{2}T_2^\rho, \gamma \geq \tau_k ).
    \end{aligned}
    \end{equation*}
    
    Note that all the probabilities of the following form $\Prob_\x \big(\mathbb{W}_2(\bmu_\gamma^\sigma; \delta_a) \geq (1 + 2\varepsilon)\rho$, $\mathcal{T}_{\text{out}}(k) < k T_1^\rho$, $\mathcal{T}_{\text{in}}(k) > \frac{k}{2}T_2^\rho$, $\gamma \in [\theta_k, \tau_{k+1}]\big)$ are equal to $0$ by \eqref{eq:aux:mu_t_upper_bound}. For the latter two sums we can use \eqref{eq:aux:Tin_final_bound} and \eqref{eq:aux:T_out_final_bound}. Finally, we can conclude that the twice summation of infinitesimals above gives us
    \begin{equation*}
    \begin{aligned}
       A \leq \frac{o_\sigma}{(1 - o_\sigma)^3} + (1 + m^*)^3 \exp{-\frac{8 (H + 1)}{\sigma^2}}.
    \end{aligned}
    \end{equation*}
    
    We remind that, by definition, $m^* = \lceil\frac{2}{T_2^\rho} \rceil\exp{\frac{2(H + 1)}{\sigma^2}}$. It means that $A$ is also bounded by some function that tends to 0. Combining the bounds above for $A$ and $B$ as well as Lemma \ref{lm:conv_to_delta_a} (decrease $\sigma$ if necessary such that $\exp{\frac{2(H + 1)}{\sigma^2}} > T_{\text{st}}^\rho$), we finally get
    \begin{equation*}
        \begin{aligned}
        \sup_{\x \in \mathcal{C}^2_\rho} \Prob_{\x}\left(\gamma \leq \tau_{G}^\sigma \wedge \exp{\frac{2(H + 1)}{\sigma^2}} \right) \xrightarrow[\sigma \to 0]{} 0.
        \end{aligned}
    \end{equation*}

\end{proof}

\subsubsection{Proof of Lemma \ref{lm:tau_0=tau_G}: Exit before nearing \texorpdfstring{$a$}{a}}

    First, we show that it is possible to establish a lower bound for $$\inf_{z \in \partial G}Q^\rho(x, z) := \frac{1}{2}\inf_{z \in \partial G}\inf_{t > 0}\inf_{\phi} I_{t}^{\x}(\phi),$$ where $\x \in \mathcal{C}^1_{\varepsilon, \rho}$ and  $\mathcal{C}^1_{\varepsilon, \rho} = \{ \x \in \mathfrak{X}: T_{\text{st}}^\rho \leq \TP{\x} \leq \infty; \OM{\x} \in \mathbb{B}_{(1 + 2\varepsilon)\rho}(\delta_a); \LP{\x} \in G \}$, and infimum is taken over functions $\phi \in C([0, t]; \R^d)$ such that $\phi_t = z$ and $\mu^\phi_{s} := \frac{1}{s} \int_0^s \delta_{\phi_{z}} \dd{z} \in \mathbb{B}_{(1 + 2\varepsilon)\rho}(\delta_a)$ for any $s \leq t$. Moreover, we show that this lower bound approaches $H$ with $\rho \to 0$. Indeed, for any $f \in C([0, T]; \R^d)$, such that $\mu^f_{t} \in \mathbb{B}_{(1 + 2\varepsilon)\rho}(\delta_a)$ for any $t \leq T$, and $f_0 \in G$, $f_T \in \partial G$:
    \begin{equation*}
        \begin{aligned}
            \frac{1}{4} &\int_0^T |\dot{f}_t + \nabla V(f_t) + \nabla F*\delta_a(f_t)|^2\dd{t}\\
            &= \frac{1}{4} \int_0^T |\dot{f}_t + \nabla V(f_t) + \nabla F * \mu^f_{t}(f_t)|^2 \dd{t} + \frac{1}{4} \int_0^T |\nabla F*\mu^f_{t}(f_t) - \nabla F*\delta_a(f_t)|^2\dd{t} \\
            &\quad+ \frac{1}{2} \int_0^T \big\langle \dot{f}_t + \nabla V(f_t) + \nabla F*\delta_a(f_t); \nabla F*\mu^f_{t}(f_t) - \nabla F*\delta_a(f_t) \big\rangle \dd{t} \\
            &\leq \frac{1}{4} \int_0^T |\dot{f}_t + \nabla V(f_t) + \nabla F * \mu^f_{t}(f_t)|^2 \dd{t} + \frac{\text{Lip}_{\nabla F}}{4}T(1 + 2\varepsilon)^2\rho^2 \\
            &\quad+ \frac{1}{2} \sqrt{\int_0^T |\dot{f}_t + \nabla V(f_t) + \nabla F * \mu^f_{t} (f_t)|^2\dd{t}} \cdot \text{Lip}_{\nabla F}(1 + 2\varepsilon)\rho.
        \end{aligned}
    \end{equation*}
    
    We can first take the infimum of both sides of the inequality above over all $f$ such that $\mu^f_{t} \in \mathbb{B}_{(1 + 2\varepsilon)\rho}(\delta_a)$ and $f_0 \in G$, $f_T \in \partial G$, then over all $T > 0$. Note that $H$ will be less than the infimum in the left-hand side. Thus, we conclude that
    \begin{equation*}
    \begin{aligned}
        H &\leq \inf_{z \in \partial G} Q^\rho (a, z) + \text{Lip}_{\nabla F}(1 + 2\varepsilon)\rho\sqrt{\inf_{z \in \partial G} Q^\rho (a, z)}\\
        & \leq Q^\rho(a, y) + \inf_{z \in \partial G} Q^\rho (y, z) + \text{Lip}_{\nabla F}(1 + 2\varepsilon)\rho\sqrt{\inf_{z \in \partial G} Q^\rho (a, z)},
    \end{aligned}
    \end{equation*}
    or, equivalently,
    \begin{equation}
        \inf_{z \in \partial G} Q^\rho (y, z) \geq H - C(\rho),
    \end{equation}
    where $C(\rho) > 0$ is some function of $\rho$, that tends to $0$, whenever $\rho \to 0$. 
    
    By Lemma \ref{lm:rest_inside_annulus_prob}, there exists time $T > 0$ big enough such that, $$\limsup_{\sigma \to 0} \frac{\sigma^2}{2} \log \sup \Prob_{\x}(\tau_0 > T, \gamma > \tau_0) < -H,$$ where supremum is taken over $\x \in \mathcal{C}^3_{\varepsilon, \rho} \subset \mathcal{C}^1_{\varepsilon, \rho}$, where $\mathcal{C}^1_{\varepsilon, \rho}$ and $\mathcal{C}^3_{\varepsilon, \rho}$ are defined in \eqref{eq:def:set_C_1} and \eqref{eq:def:set_C_2} respectively. Consider the following set:
    \begin{equation*}
        \Phi := \{\phi \in C([0, T]): \exists t \in [0, T]: \phi_t \in \partial G, \text{ and } \mu^\phi_{s} \in \mathbb{B}_{(1 + 2\varepsilon)\rho}(\delta_a) \; \forall s \in [0, T]\}.
    \end{equation*}
    By the definition of $Q^\rho(x, z)$ and proved above facts,
    \begin{equation*}
        \inf_{\substack{T_{\text{st}}^\rho \leq \TP{\x} \leq \infty \\ \LP{\x} \in S_{(1 + \varepsilon)\rho}(a)\\ \OM{\x} \in \mathbb{B}_{(1 + 2\varepsilon)\rho}(\delta_a)\\ }} \inf_{\phi \in \Phi} I_{T}^{\x}(\phi) \geq \inf_{ \substack{y \in S_{(1 + \varepsilon)\rho}(a) \\ z \in \partial G}} Q^\rho(y, z) \geq H - C(\rho).
    \end{equation*}
    By Lemma \ref{lm:LDP_compact_init_cond}, 
    \begin{equation*}
    \begin{aligned}
        \limsup_{\sigma \to 0} \frac{\sigma^2}{2} \log \sup_{ \substack{T_{\text{st}}^\rho \leq \TP{\x} \leq \infty \\ \LP{\x} \in S_{(1 + \varepsilon)\rho}(a)\\ \OM{\x} \in \mathbb{B}_{(1 + 2\varepsilon)\rho}(\delta_a)}} \sup_{\phi \in \Phi}\Prob_{\x}(\X^\sigma \in \Phi) &\leq - \inf_{\substack{T_{\text{st}}^\rho \leq \TP{\x} \leq \infty \\ \LP{\x} \in S_{(1 + \varepsilon)\rho}(a)\\ \OM{\x} \in \mathbb{B}_{(1 + 2\varepsilon)\rho}(\delta_a)}} \inf_{\phi \in \Phi} I_{T}^{\x}(\phi) \\
        & \leq - H + C(\rho).        
    \end{aligned}
    \end{equation*}
    And, since $\Prob_{\x}(\tau_0 = \tau_G^\sigma, \tau_0 < \gamma) \leq \Prob_{\x}(\tau_0 < T, \tau_0 < \gamma) + \Prob_{\x}(\X^\sigma \in \Phi)$, we get:
    \begin{equation*}
         \limsup_{\sigma \to 0} \frac{\sigma^2}{2} \log \sup_{\substack{T_{\text{st}}^\rho \leq \TP{\x} \leq \infty \\ \LP{\x} \in S_{(1 + \varepsilon)\rho}(a)\\ \OM{\x} \in \mathbb{B}_{(1 + 2\varepsilon)\rho}(\delta_a)}} \sup_{\phi \in \Phi} \Prob_{\x}(\tau_0 = \tau_G^\sigma, \tau_0 < \gamma) \leq -H + C(\rho),
    \end{equation*}
    which proves the lemma by taking $\rho \to 0$.

\subsubsection{Proof of Lemma \ref{lm:T(eps,c)}: Control of dynamics for small time intervals}

    Let us fix some $0 < \rho, \varepsilon < 1$ and let us investigate the dynamics of $|\X^\sigma - \LP{\x}|$. We plug in the expression describing $\X^\sigma_t$ from \eqref{eq:SID_main_sys_gen}, use Lipschitzness of $\nabla V$ and $\nabla F$ (Assumption \ref{A-1}.\ref{A-1.Lip}) as well as $\nabla V(a) = 0$, $\nabla F(0) = 0$, and express for any $t$:
    
    \begin{equation*}
        \begin{aligned}
            |\X_t^\sigma - \LP{\x}| &\leq \sigma|W_t| + \text{Lip}_{\nabla V} \int_0^t |\X_s^\sigma - \LP{\x}|\dd{s} + \text{Lip}_{\nabla V} |\LP{\x} - a|t \\
            &\quad + \int_0^t\frac{\TP{\x}}{\TP{\x} + s}\text{Lip}_{\nabla F} \Big( |\X_s^\sigma - \LP{\x}| + \int|z - \LP{\x}|\OM{\x}(\dd{z}) \Big)\dd{s} \\
            &\quad + \int_0^t\frac{1}{\TP{\x} + s} \text{Lip}_{\nabla F} \int_0^s \Big(|\X_s^\sigma - \LP{\x}| + |\X_u^\sigma - \LP{\x}| \Big)\dd{u}\dd{s}. 
        \end{aligned}
    \end{equation*}
    Use the Jensen's inequality, integrate the second part of the last integral over $s$, and get: 
    \begin{equation*}
        \begin{aligned}        
            |\X_t^\sigma - \LP{\x}| &\leq \sigma|W_t| + \text{Lip}_{\nabla V} \int_0^t |\X_s^\sigma - \LP{\x}|\dd{s} + \text{Lip}_{\nabla V} |\LP{\x} - a|t \\
            &\quad + \text{Lip}_{\nabla F} \int_0^t |\X^\sigma_s - \LP{\x}|\dd{s} + \text{Lip}_{\nabla F}t \Big( \mathbb{W}_2(\OM{\x}; \delta_a) + |\LP{\x} - a| \Big)\\
            &\quad + \text{Lip}_{\nabla F} \Big(\int_0^t|\X_s^\sigma - \LP{\x}|\dd{s} + \int_0^t \log(\frac{\TP{\x} + t}{\TP{\x} + s}) |\X_s^\sigma - \LP{\x}|\dd{s}  \Big).
        \end{aligned}
    \end{equation*}
    Finally, introducing $R:= \sup_{z \in \partial G} |z - a|$ and using the fact that $\x$ is assumed, in the lemma, to belong to $\mathcal{C}^1_{\varepsilon, \rho}$, we get the bound: 
    \begin{equation*}
        \begin{aligned}             
            |\X_t^\sigma - \LP{\x}| &\leq \sigma |W_t| + \Big((\text{Lip}_{\nabla V} + \text{Lip}_{\nabla F})R  + \text{Lip}_{\nabla F}(1 + 2\varepsilon)\rho \Big)t \\
            &\quad + \left(\text{Lip}_{\nabla V} + 2\text{Lip}_{\nabla F} + \log(\frac{\TP{\x} + t}{\TP{\x}}) \right) \int_0^t |\X_s^\sigma - \LP{\x}|\dd{s}.
        \end{aligned}
    \end{equation*}
    We apply Gr\"{o}nwall's inequality and the fact that, by the chose of $\rho$ and $\varepsilon$, we have $(1 + 2\varepsilon)\rho < 3$, and get:
    \begin{equation}\label{eq:|X_t-x|_1}
    \begin{aligned}
        |\X_t^\sigma - &\LP{\x}| \leq \Big( \sigma |W_t| + \Big((\text{Lip}_{\nabla V} + \text{Lip}_{\nabla F})R  + 3\text{Lip}_{\nabla F} \Big)t \Big) \\
        &\quad\times \exp{(\text{Lip}_{\nabla V} + 2\text{Lip}_{\nabla F})t + \TP{\x}\frac{\TP{\x} + t}{\TP{\x}}\log(\frac{\TP{\x} + t}{\TP{\x}}) - t}.
    \end{aligned}
    \end{equation}
    Thus, it follows that, for the absolute value of the Brownian motion itself,
    \begin{equation*}
        \sigma |W_t| \geq - C_1 t + C_2(t)|\X_t^\sigma - \LP{\x}|,
    \end{equation*}
    where, for simplicity of further derivations, we introduced a positive constant $C_1 := (\text{Lip}_{\nabla V} + \text{Lip}_{\nabla F})R  + 3\text{Lip}_{\nabla F}$ and a function $C_2(t)$ that is one over the exponent that appears in the equation \eqref{eq:|X_t-x|_1}. For our purposes, it is not the form of $C_2$ itself that is important, but rather its following properties: $C_2(t) \geq 0$ for any $t \geq 0$ and $C_2(t) \xrightarrow[t \to 0]{} 1$. 
    
    The infimum of $C_2$ over any time interval $[0, T]$, for $T < 1$, is either equal to $C_2(T)$, if $\text{Lip}_{\nabla V} + 2\text{Lip}_{\nabla F} \geq 1$, or can be bounded by $C_2(T)\exp\{\text{Lip}_{\nabla V} + 2\text{Lip}_{\nabla F}  - 1\}$ otherwise. This observation can be expressed in the following form: 
    \begin{equation*}
        \inf_{t \in [0, T]} C_2(t) \geq \min \{C_2(T);C_2(T)\exp{\text{Lip}_{\nabla V} + 2\text{Lip}_{\nabla F} - 1}\}.
    \end{equation*}
    Moreover, it is easy to check that $\lim_{T \to 0} C_2^2(T)/T = \infty$. 
    
    Taking into account these remarks, we can now use the Schilder theorem \cite[Lemma~5.2.1]{DZ10} that provides the LDP for the path of the Brownian motion. Hence, for some fixed $\zeta$ and $0 < T < 1$: 
    \begin{equation*}
    \begin{aligned}
        \Prob_{\x}(\sup_{t \in [0, T]} |\X^{\sigma}_t - \LP{\x}| \geq \zeta) &\leq \Prob_\x (\sigma \sup_{t \in [0, T]}|W_t| \geq - C_1 t + C_2(t)\zeta) \\
        &\leq 4d \exp{-\frac{(-C_1 t + \inf_{t \in [0, T]}C_2(t)\zeta)^2}{4dT} \cdot \frac{2}{\sigma^2}},
    \end{aligned}
    \end{equation*}
    where $d$ is the dimension of the space. 
    
    Note that for any $\zeta > 0$ the following limit holds: 
    \begin{equation*}
        \frac{\big(-C_1 T + \inf_{t \in [0, T]}C_2(t)\zeta \big)^2}{T} \xrightarrow[T \to 0]{} \infty. 
    \end{equation*}
     Thus, for any given in advance $c$ and $\zeta$, we can choose $T = T(\zeta, c)$ to be small enough, such that the rate $-\big(-C_1 T(\zeta, c) + \inf_{t \in [0, T(\zeta, c)]}C_2(t)\zeta\big)^2/\big(4dT(\zeta, c)\big)$ will be less or equal to $-c$, which completes the proof.

\subsubsection{Proof of Lemma \ref{lm:unif_lower_bound}: Uniform lower bound for the probability of exit from \texorpdfstring{$G$}{G}}

In order to prove the lemma, we need to find for some $\delta > 0$ small enough, for some $T_0 > 0$ and for any $\x \in \mathcal{C}^3_{\varepsilon, \rho}$ a function $\psi^\x \in C([0, T_0]; \R^d)$ such that $\psi^\x(0) = \LP{\x}$, $\inf_{z \in G}|\psi^\x(s) - z| > \delta$ for some $s \leq T_0$ and $I^{\x}_{T_0}(\psi^\x) < H + \eta$. Moreover, this function should posses an empirical measure that is close to $\delta_a$ at any point of time, i.e. $\mathbb{W}_2\left( \frac{1}{t} \int_0^t \delta_{\psi^\x_s}\dd{s}; \delta_a \right) \leq \rho$. Given such a function, by a simple inclusion of events, we can get the following bound for probability of leaving domain $G$ before time $T_0$:
\begin{equation*}
    \Prob_{\x} (\tau_G^\sigma \leq T_0, \gamma > \tau_G^\sigma) \geq \Prob_{\x} (\X^{\sigma} \in \Psi),
\end{equation*}
where $\Psi:= \bigcup_{\x \in \mathcal{C}^3_{\varepsilon, \rho}} \Psi^\x$ and $\Psi^\x := \{\phi \in C([0, T_0]; \R^d): \|\phi - \psi^\x\|_{\infty} < \delta \}$. Of course, we should take $\delta$ small enough such that any $\phi \in \Psi$ has an occupation measure that satisfies $\mathbb{W}_2\left( \frac{1}{t} \int_0^t \delta_{\phi_s}\dd{s}; \delta_a\right)$. Note that $\Psi$ is an open subset of $C([0, T_0]; \R^d)$, as a union of open sets. Therefore, we can use Theorem \ref{th:LDP_gen} and get
\begin{equation*}
    \limsup_{\sigma \to 0} \frac{\sigma^2}{2} \log \Prob_{\x} (\tau_G^\sigma \leq T_0, \gamma > \tau_G^\sigma) \geq -\inf_{\phi \in \Phi} I^{\x}_{T_0}(\phi) \geq - I^{\x}_{T_0}(\psi^\x) > - (H + \eta).
\end{equation*}

The same lower bound obviously holds for infimum $\inf_{\x \in \mathcal{C}^3_{\varepsilon, \rho}} \Prob_{\x} (\tau_G^\sigma \leq T_0, \gamma > \tau_G^\sigma)$, which is what the lemma claims. Thus, we only need to find a $\psi^\x$ with the properties given above.

\textit{Construction of $\psi^\x$}. The function $\psi^\x$ will be represented as a consecutive gluing of four functions (see Figure \ref{fig:psi_x_constr}): $\psi_0^\x \in C([0, 1]; \R^d)$, $\psi_a \in C([0, T_a]; \R^d)$, $\psi_1 \in C([0, T_1]; \R^d)$, and $\psi_2 \in C([0, 1]; \R^d)$, for some $T_a, T_1$ that will be defined below and will determine $T_0$ as $T_0 := T_a + T_1 + 2$.

 \begin{figure}[b]
    \centering
    \begin{tikzpicture}
        \draw (0, 0) node[inner sep=0]{\includegraphics[width=0.465\columnwidth]{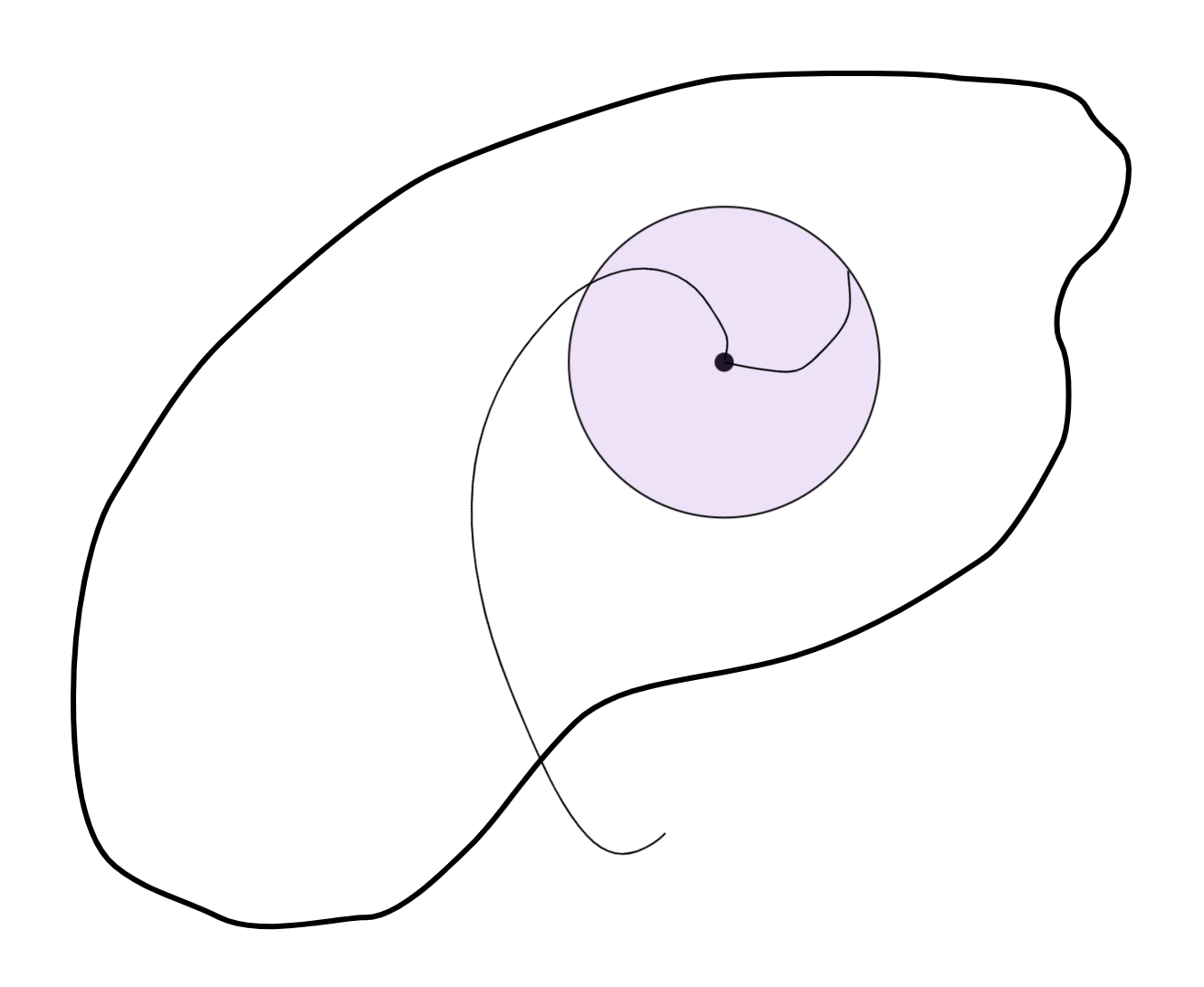}};
        \draw (-1, -1.4) node {$\color{blue} G$};
        \draw (0.4, 0.5)  node {$\color{blue} a$};

        \fill (1.23, 1.16)  circle[radius=1pt];
        \draw (1.4, 1.4) node {$\LP{\x}$};
        \draw (1, 0.45) node {$\psi_0^\x$};

        \draw (-0.9, 0) node {$\psi_1$};
        \fill (-0.31, -1.35)  circle[radius=1pt];
        \draw (0.2, -1.4) node {$\psi_2$};
        
        \draw (-0.1, -2) node {$x_{\text{out}}$};
        \fill (0, -1.78)  circle[radius=1pt];
    \end{tikzpicture}
    \caption{Construction of $\psi^\x$}
    \label{fig:psi_x_constr}
    \end{figure}

1. The first function $\psi_0^\x$ will bring us from point $\LP{\x}$ to point $a$ in at most time $1$ with small rate function. Define
\begin{equation*}
    \psi^\x_0(s) = \begin{cases}
        \LP{\x} + \frac{a - \LP{\x}}{|a - \LP{\x}|} s, \text{ for } 0 \leq s \leq |a - \LP{\x}|, \\
        a, \text{ for } |a - \LP{\x}| \leq s \leq 1.
    \end{cases} 
\end{equation*}
We can establish the following bound for its rate function. Let $\x_0 = (\infty, \delta_a, \LP{\x})$ and $\mu^{\psi_0^\x}_s := \frac{\TP{\x}}{\TP{\x} + s} \OM{\x} + \frac{1}{\TP{\x} + s}\int_0^s \delta_{\psi_0^\x(s)}\dd{s}$. We want to separate the interaction . Then:
\begin{equation*}
    \begin{aligned}
        I_1^\x (\psi_0^\x) &\leq 2I_1^{\x_0} (\psi_0^\x) + \frac{1}{2} \int_0^1\Big|\nabla F*\mu^{\psi_0^\x}_s(\psi_0^\x(s))  - \nabla F(\psi_0^\x(s) - a) \Big|^2 \dd{s}.
    \end{aligned}
\end{equation*}

Note that, since $\psi^\x_0$ never leaves $B_\rho(a)$ and since $\OM{\x} \in \mathbb{B}_{(1 + 2\varepsilon)\rho}(\delta_a)$, then $\mathbb{W}_2(\mu^{\psi_0^\x}_s; \delta_a) \leq (1 + 2\varepsilon)\rho$ for any $0 \leq s \leq 1$. We use Lipschitz continuity of $\nabla F$ (Assumption \ref{A-1}.\ref{A-1.Lip}) and get
\begin{equation*}
    \begin{aligned}
     I_1^\x (\psi_0^\x) &\leq 2I_1^{\x_0} (\psi_0^\x) + \frac{\text{Lip}_{\nabla F}^2}{2} \max_{0 \leq s \leq 1}\mathbb{W}_2^2(\mu^{\psi_0^\x}_s; \mu^{\psi_0^\x}_{1}) \\
     & \leq 2I_1^{\x_0} (\psi_0^\x) + \frac{\text{Lip}_{\nabla F}^2}{2} (1 + 2\varepsilon)^2\rho^2.
    \end{aligned}
\end{equation*}
As for $I^{\x_0}_1(\psi_0^\x)$,
\begin{equation}\label{eq:aux:4I_x0_1}
    \begin{aligned}
        4I_1^{\x_0} &= \int_0^{|a - \LP{\x}|} \left| \frac{a - \LP{\x}}{|a - \LP{\x}|} + \nabla V(\psi_0^\x(s)) \right|^2 \dd{s} \\
        & \leq 2|a - \LP{\x}| + 2\text{Lip}_{\nabla V} \int_0^{|a - \LP{\x}|}|\psi_0^\x(s)|^2\dd{s} \\
        & \leq 2 \rho + 4 \text{Lip}_{\nabla V}(|a|^2 + \rho^2) \rho.
    \end{aligned}
\end{equation}
Therefore, we can choose $\rhobar(\eta) > 0$ small enough such that
\begin{equation*}
    I_1^\x(\psi_0^\x) \leq \frac{\eta}{5}.
\end{equation*}

2. The second function $\psi_a$ is defined to be constant and equal to $a$ for some time $T_a$, which will be increased later if necessary:
\begin{equation*}
    \psi_a \equiv a.
\end{equation*}
We need this segment in order to put enough ``mass'' in $\delta_a$ and balance the later path. Of course, its occupation measure never exceeds $(1 + 2\varepsilon)\rho$. Moreover, if we denote $\mathbf{a} = (\infty, \delta_a, a)$, $\x_a := (\TP{x} + 1, \mu^{\psi_0^\x}_{1}, a)$, and $\mu^{\psi_a}_s := \frac{\TP{\x} + 1}{\TP{\x} + 1 + s} \mu^{\psi^\x_0}_1 + \frac{1}{\TP{\x} + 1 + s} \int_0^s \delta_{\psi_a(s)} \dd{s}$, we can, as before, establish the following bound for the rate function:
\begin{equation*}
    \begin{aligned}
     I_{T_a}^{\x_a} (\psi_a) &\leq 2I_{T_a}^{\mathbf{a}} (\psi_a) + \frac{\text{Lip}_{\nabla F}^2}{2} \max_{0 \leq s \leq T_a}\mathbb{W}_2^2(\mu^{\psi_a}_s; \delta_a) \\
     & \leq \frac{\text{Lip}_{\nabla F}^2}{2} (1 + 2\varepsilon)^2\rho^2 \leq \frac{\eta}{5},
    \end{aligned}
\end{equation*}
for $\rho$ small enough. Note that this bound is independent of the choice of $T_a$.

3. In order to construct the third function $\psi_1$, we first remind that $H = \inf_{z \in \partial G} \{V(z) + F(z - a) - V(a)\}$ is the height of the effective potential $V + F(\cdot - a)$ within domain $G$. By the classical result, $H$ can be also expressed as an infimum
\begin{equation*}
    \inf_{t > 0} \inf_{\phi \in \Phi^a_t}I^{\mathbf{a}}_t (\phi) = H,
\end{equation*}
where $\mathbf{a} := (\infty, \delta_a, a)$ and $\Phi_t^a := \{\phi \in C([0, t]; \R^d): \phi(0) = a, \phi(t) \in \partial G\}$. Hence, there exists $T_1 > 0$ and $\psi_1 \in \Phi^a_{T_1}$ (which among other things implies $\psi_1(T) \in \partial G$) such that $I^{\mathbf{a}}_{T_1} (\psi_1) \leq H + \eta/10$. Define
\begin{equation*}
    \mu^{\psi_1}_s := \frac{\TP{\x} + 1}{\TP{\x} + 1 + T_a + s} \mu^{\psi_0^\x}_1 + \frac{T_a}{\TP{\x} + 1 + T_a + s} \delta_a + \frac{s}{\TP{\x} + 1 + T_a + s} \cdot \frac{1}{s} \int_0^s \delta_{\psi_1(s)} \dd{s}.    
\end{equation*}

So, if we define $R = \sup_{z \in G} |z - a|$, we can express:
\begin{equation*}
    \begin{aligned}
        \mathbb{W}_2(\mu^{\psi_1}_s ; \delta_a) \leq (1 + 2\varepsilon) \rho + \frac{-(1 + 2\varepsilon)\rho T_a + T_1 R}{\TP{\x} + 1 + T_a + s}.
    \end{aligned}
\end{equation*}

And we can increase without loss of generality $T_a$ to be big enough such that $-(1 + 2\varepsilon)\rho T_a + T_1 R < 0$, which guarantees that
\begin{equation*}
    \mathbb{W}_2(\mu^{\psi_1}_s ; \delta_a) \leq (1 + 2\varepsilon)\rho
\end{equation*}
for any $s \leq T_1$.

We can also achieve the following upper bound for the rate function of $\psi_1$ with initial condition $\x_1 = (\TP{\x} + 1 + T_a, \mu_1^{\psi_a}, a)$. For any $c \in (0, 1)$,
\begin{equation*}
\begin{aligned}
    I^{\x_1}_{T_1}(\psi_1) &\leq (1 + c) I^{\mathbf{a}}_{T_1}(\psi_1) + \frac{1 + c}{4c} \int_0^{T_1}\left| \nabla F * \mu^{\psi_1}_s (\psi_1(s)) - \nabla F (\psi_1(s) - a) \right|^2 \dd{s} \\    
    & \leq (1 + c)I^{\mathbf{a}}_{T_1}(\psi_1) + \frac{\text{Lip}_{\nabla F} T_1 (1 + c)}{4 c} \mathbb{W}_2^2(\mu^{\psi_1}_s; \delta_a) \\
    & \leq (1 + c)I^{\mathbf{a}}_{T_1}(\psi_1) + \frac{\text{Lip}_{\nabla F} T_1 (1 + c)}{2 c} (1 + 2\varepsilon)^2 \rho^2 \xrightarrow[\rho \to 0]{} (1 + c)I^{\mathbf{a}}_{T_1}(\psi_1). 
\end{aligned}
\end{equation*}
Since the limit above holds for any $c \in (0, 1)$, we can, without loss of generality, make $\rho$ small enough such that $I^{\x_1}_{T_1}(\psi_1) \leq I^{\mathbf{a}}_{T_1}(\psi_1) + \eta/10$. That leads to
\begin{equation*}
    I^{\x_1}_{T_1}(\psi_1) \leq H + \frac{\eta}{5},
\end{equation*}
for any $\x \in \mathcal{C}^3_{\varepsilon, \rho}$ (we remind that $\x_1$ depends on $\x$).

4. For the last part, we choose a point $x_{\text{out}} \notin G$ such that the line segment starting from point $\psi_1(T_1)$ and finishing at $x_{\text{out}}$ is included in $\R^d \setminus G$ (this point exists since $\partial G$ is smooth by Assumption \ref{A-2}.\ref{A-2.G_open}). Define 
\begin{equation*}
    \psi_2(s) := 
        \psi_1(T_1) + \frac{\psi_1(T_1) - x_{\text{out}}}{|\psi_1(T_1) - x_{\text{out}}|} s,
\end{equation*}
for $0 \leq s \leq \delta := |\psi_1(T_1) - x_{\text{out}}|$. After that, let this function follow a path that does not contribute to the rate function for the rest of the time $\delta < s \leq 1$. Precisely, if we define the cumulative empirical measure of $\psi_2$ as  
\begin{equation*}
    \mu^{\psi_2}_s := \frac{\TP{\x} + 1 + T_a + T_1}{\TP{\x} + 1 + T_a + T_1 + s} \mu^{\psi_1}_{T_1} + \frac{1}{\TP{\x} + 1 + T_a + T_1 + s} \int_0^s \delta_{\psi_2(s)} \dd{s},
\end{equation*}
we can also construct a deterministic process $\X^{\x_3, 0}_s$, defined by \eqref{eq:SID_main_sys_gen} with $\sigma=0$, using the initial conditions $\x_3 := (T_3, \mu_3, x_3)$, where $T_3 = \LP{\x} + 1 + T_a + T_1 + \delta$, $\mu_3 = \mu^{\psi_2}_{\delta}$ and $x_3 = x_{\text{out}}$. Lemma \ref{lm:X0_conv} provides the upper bound for the occupation measure of $\X^{\x_3, 0}$: $\mathbb{W}_2(\bmu^{\x_3, 0}_s; \delta_a) \leq (1 + 2\varepsilon)\rho$, for some $\varepsilon > 0$ that is defined in the lemma. Hence, similarly to previous computations, we can control the cumulative empirical measure $\mu^{\psi_2}_s$. We can increase $T_a$ if necessary such that
\begin{equation*}
    \mathbb{W}_2(\mu^{\psi_2}_s; \delta_a) \leq (1 + 2\varepsilon)\rho
\end{equation*}
for any $s \leq T_1$

Define $\x_2 := (\LP{\x} + 1 + T_a + T_1, \mu^{\psi_1}_{T_1}, \psi_1(T_1))$. We remind that $\psi_2$ was constructed in a way that the second part of it does not contribute to the rate function $I^{\x_2}_1(\psi_2)$. For the first part the computations are similar to those of \eqref{eq:aux:4I_x0_1}. As a result, without loss of generality, we can choose $\delta > 0$ to be small enough ($x_{\text{out}}$ closer to $\partial G$) and get:
\begin{equation*}
    I^{\x_2}_{1}(\psi_2) \leq \frac{\eta}{5}.
\end{equation*}

5. Finally, by letting 
\begin{equation}
    \psi^\x = \begin{cases}
        \psi_0^\x(s) , \text{ for } 0 \leq s \leq 1, \\
        \psi_a(s - 1), \text{ for } 1 < s \leq 1 + T_a, \\
        \psi_1(s - 1 - T_a), \text{ for } 1 + T_a < s \leq 1 + T_a + T_1, \\
        \psi_2(s - 1 - T_a - T_1), \text{ for } 1 + T_a + T_1 < s \leq T_a + T_1 +2;
    \end{cases}
\end{equation}
we can observe that $I_{T_0}^{\x} (\psi^\x) = I_{1}^{\x} (\psi^\x_0) + I_{T_a}^{\x_a} (\psi_a) + I_{T_1}^{\x_1} (\psi_1)  + I_{1}^{\x_2} (\psi_2) \leq H + \frac{4 \eta}{5} < H + \eta$ and $\sup_{s \in [0, T_0]}\inf_{z \in G}|\psi^\x(s) - z| > \delta$. Moreover, by construction of all the function, we have ensured that $\frac{1}{t}\int_0^t \delta_{\psi^\x(s)} \dd{s} \in \mathbb{B}_{(1 + 2\varepsilon) \rho}(\delta_a)$ for any $t \leq T_0$, which is what is needed.

\section{Generalization}\label{s:Generalization}
In this section we present the possible generalisation of Assumptions \ref{A-1} and \ref{A-2} as well as the exit-time result for this more general process. In this paper we did not talk in details about the problem of existence and uniqueness of the self-interacting diffusion. While in the Lipschitz case (Assumption \ref{A-1}) these questions are standard, locally Lipschitzness of the potentials add some complication. In \cite{ADMKT22}, the authors provide a proof for existence and uniqueness of self-interacting diffusion under the following assumptions that we take as a baseline:
\begin{custom_assumption}{A-1$^\prime$}\label{A-1_prime}
    \hfill
    \begin{enumerate}
        \item \label{A-1_prime.Cont} (regularity) Potentials $V$ and $F$ belong to the space $C^2(\R^d; \R)$.
        \item \label{A-1_prime.growth} (growth) There exist a constant $C_{\text{gr}}$ and an order $n > 0$ such that for any $x, y \in \R^d$, $|\nabla V(x) - \nabla V(y)| \leq C_{\text{gr}}|x - y|(1 + |x|^{2n} + |y|^{2n})$ as well as $|\nabla F(x) - \nabla F(y)|\leq C_{\text{gr}}|x - y|(1 + |x|^{2n} + |y|^{2n})$.
        \item \label{A-1_prime.conv_inf} (confinement at infinity) $\underset{\vert x\vert\rightarrow \infty}{\lim} V(x) =+ \infty$,  $\underset{\vert x\vert\rightarrow \infty}{\lim} \frac{\vert\nabla V(x)\vert^2}{V(x)} = +\infty$ and there exists $\alpha > 0$ such that $\Delta V(x)\leq \alpha V(x)$.
    \end{enumerate}
\end{custom_assumption}

It is worth noting that Assumption \ref{A-1_prime}.\ref{A-1_prime.growth} is a combination of locally Lipschitz and polynomial growth conditions. Assumption \ref{A-1_prime}.\ref{A-1_prime.conv_inf} has been introduced in \cite{ADMKT22} to control the Lyapunov functional of the diffusion process to ensure that there is no explosion within a finite time. It is important to mention that, if required, this assumption can be replaced by another one that ensures the process's global existence and uniqueness without any impact on the exit-time result. 

We can also relax Assumptions \ref{A-2}. Indeed, the domain $G$ does not necessarily have to be bounded, but it is necessary to add another assumption on the level sets instead. Consider:

\begin{custom_assumption}{A-2$^\prime$}\label{A-2_prime}
    \hfill
    \begin{enumerate}
        \item \label{A-2_prime.G_open} (domain $G$) $G \subset \R^d$ is an open connected set such that $\partial G = \partial \overline{G}$. The boundary $\partial G$ is a smooth $(d-1)$-dimensional hypersurface.
        \item \label{A-2_prime.bounded_level_set} (bounded sublevel set) Define $L^{-}_{H}:=\{x \in U_a(x) \leq H\}$ the sublevel set of the height $H:= \inf_{x \in \partial G} \{U_a(x) - U_a(a)\}$. Then $L^{-}_{H} \cap G$ is a bounded connected set.
        \item \label{A-2_prime.Effect_pot_conv} (stability of $\overline{G}$ under the effective potential) Let $\phi$ be defined as $\phi_t^x = x - \int_0^t \nabla U_a(\phi_s^x) \dd{s}$. For any $x \in \overline{G}$, $\{\phi^x_t\}_{t > 0} \subset G$ and $\phi^x_t \xrightarrow[t\to \infty]{} a$. Moreover, $\nabla U_a(a) = 0$.
        \item \label{A-2_prime.Strong_attraction_a} (strong attraction around $a$) There exist $\Delta_\mu, \Delta_x > 0$ small enough and constants $0 < K_2 < K_1<\infty$, such that for any $\mu$ satisfying $\mathbb{W}_2(\mu;\delta_a) \leq \Delta_\mu$ and for any $x \in B_{\Delta_x}(a)$, we have $$\langle \nabla V(x) + \nabla F*\mu(x); x - a \rangle \geq K_1|x - a|^2-K_2|x-a|\mathbb{W}_2\left(\mu;\delta_a\right).$$
    \end{enumerate}
\end{custom_assumption}

Under these assumptions, we can obtain the Kramers' type law and the exit-location result. Consider
\begin{corollary}
    Let Assumptions \ref{A-1_prime} and \ref{A-2_prime} be fulfilled. Let the process $X^\sigma$ be the unique strong solution of the system \eqref{eq:SID_main_sys}. Let $\tau_G^\sigma := \inf \{t: X_t^\sigma \notin G\}$ denote the first time when $X^\sigma$ exits the domain $G$. Let $H:= \inf_{x \in \partial G} \{U_a(x) - U_a(a)\}$ be the height of the effective potential. Then, the following two results hold:
    \begin{enumerate}
        \item Kramers' type law: for any $\delta > 0$
    \begin{equation}
        \lim_{\sigma \to 0}\Prob_{x_0}\left( e^{\frac{2 (H - \delta)}{\sigma^2}} < \tau_G^\sigma < e^{\frac{2 (H + \delta)}{\sigma^2}}\right) = 1;
    \end{equation}
    \item Exit-location: for any closed set $N \subset \partial G$ such that $\inf_{z \in N} \{U_a(x) - U_a(a)\} > H$ the following limit holds:
    \begin{equation}
        \lim_{\sigma \to 0} \Prob_{x_0} (X^\sigma_{\tau_G^\sigma} \in N) = 0.
    \end{equation}

    \end{enumerate}
\end{corollary}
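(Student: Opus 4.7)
The plan is to reduce this corollary to Theorem~\ref{th:main_th} via two standard adjustments: first, modify $V$ and $F$ outside a large ball to obtain globally Lipschitz potentials with bounded $\nabla F$, so that Assumption~\ref{A-1} is met; second, replace the (possibly unbounded) $G$ by a carefully chosen bounded subdomain $\tilde G \subset G$ on which Assumption~\ref{A-2} holds with the same height $H$. A coupling through the same Brownian motion then transfers the conclusion back to the original process on $G$.

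For the first adjustment, Assumptions~\ref{A-2_prime}.\ref{A-2_prime.X_0_conv_to_a} and \ref{A-2_prime}.\ref{A-2_prime.bounded_level_set} guarantee that the deterministic trajectory $\{X_t^{x_0,0}\}_{t\geq 0}$ and the closed set $L_H^- \cap \overline{G}$ lie inside some ball $B_{R/2}(a)$. I would fix such an $R$ and pick $\tilde V, \tilde F \in C^2(\R^d;\R)$ that coincide with $V$ on $B_R(a)$ and with $F$ on $B_{2R}(0)$, are globally Lipschitz, and have $|\nabla\tilde F|$ bounded; this is a routine cutoff-and-interpolation construction. For the second adjustment, I would build a bounded connected open set $\tilde G$ with $C^2$ boundary such that $L_H^- \cap \overline{G} \subset \tilde G \subset G \cap B_R(a)$, $\partial\tilde G \cap \partial G \subset L_H \cap \partial G$, and $W_a := V + F(\cdot-a) - V(a) > H$ on $\partial\tilde G \cap G$. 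A natural candidate is the connected component of $a$ in $\{W_a < H+\eta\}\cap G$ for a regular value $H+\eta$ of $W_a$ provided by Sard's theorem, followed by a local smoothing of the corner where the level set meets $\partial G$. Stability of $\overline{\tilde G}$ under the effective flow $\phi^x$ (Assumption~\ref{A-2}.\ref{A-2.Effect_pot_conv}) is automatic for such a sublevel-set construction, since $W_a$ decreases along $\phi$. The remaining items of \ref{A-2} follow because the potentials are unchanged on $B_R(a) \supset \tilde G$, so $a$, $x_0$, the whole trajectory $X^{x_0,0}$, and the local strong-attraction property around $a$ are inherited verbatim.

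With $\tilde V,\tilde F,\tilde G$ satisfying Assumptions~\ref{A-1} and \ref{A-2}, Theorem~\ref{th:main_th} applied to the modified self-interacting diffusion $\tilde X^\sigma$ (driven by the same Brownian motion as $X^\sigma$, with initial condition $x_0$) yields Kramers' law for $\tilde\tau := \inf\{t : \tilde X_t^\sigma \notin \tilde G\}$ with height $H$, together with the exit-location statement for any closed $M \subset \partial\tilde G$ with $\inf_M W_a > H$. Applied to $M = \partial\tilde G \cap G$ (where $W_a > H$ by construction), this forces $\tilde X_{\tilde\tau}^\sigma \in \partial G$ with probability tending to $1$. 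Because $\tilde V = V$ on $B_R(a)$ and $\tilde F = F$ on $B_{2R}(0)$, the two SDEs coincide as long as the trajectory stays in $B_R(a)$; since $\tilde G \subset B_R(a)$, pathwise uniqueness gives $X^\sigma = \tilde X^\sigma$ on $[0,\tilde\tau]$. On the good event $\{\tilde X_{\tilde\tau}^\sigma \in \partial G\}$ this forces $\tau_G^\sigma = \tilde\tau$ and $X_{\tau_G^\sigma}^\sigma = \tilde X_{\tilde\tau}^\sigma$, so Kramers' law for $\tau_G^\sigma$ follows immediately. The exit-location claim follows in the same way: for closed $N \subset \partial G$ with $\inf_N W_a > H$, the set $M := N \cap \partial\tilde G$ is closed in $\partial\tilde G$ with $\inf_M W_a > H$, so $\Prob_{x_0}(\tilde X_{\tilde\tau}^\sigma \in N) \to 0$, which on the good event coincides with $\Prob_{x_0}(X_{\tau_G^\sigma}^\sigma \in N)$.

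The main obstacle is the construction of $\tilde G$ in the second adjustment: one needs simultaneously boundedness, smoothness of $\partial\tilde G$, the inclusion $\partial\tilde G \cap \partial G \subset L_H \cap \partial G$, the strict inequality $W_a > H$ on $\partial\tilde G \cap G$, and stability under the effective flow. The transversality issue at the corner $\{W_a = H+\eta\} \cap \partial G$ requires either a generic-value argument combined with a local $C^2$ smoothing, or a tubular-neighborhood construction around the compact set $L_H^- \cap \overline{G}$; once this geometric input is secured, every other step — the cutoff of the potentials, the appeal to Theorem~\ref{th:main_th}, and the coupling — is essentially formal.
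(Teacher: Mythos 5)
Your proposal matches the paper's own sketch almost step for step: truncate $V,F$ outside a large ball to restore Assumption~\ref{A-1}, replace $G$ by a bounded subdomain containing $L^{-}_{H}\cap G$ on which Assumption~\ref{A-2} holds with the same height $H$, couple the modified process to the original through a shared Brownian motion (so the paths coincide up to the first exit of the subdomain), and invoke exit-location to conclude that the exit occurs on $\partial G$ with probability tending to one, hence the two exit times agree. Where the paper simply takes $B_{\text{mod}}\cap G$ and glosses over the corner at $\partial B_{\text{mod}}\cap\partial G$, you use a smoothed sublevel set, which is a genuine refinement; a few of your stated conditions on $\tilde G$ are off as written — the inclusion $L^{-}_{H}\cap\overline G\subset\tilde G\subset G$ is impossible whenever $\inf_{\partial G}W_a$ is attained (its minimizers lie on $\partial G$, not in $G$), the requirement $\partial\tilde G\cap\partial G\subset L_H\cap\partial G$ is neither needed nor satisfied by your sublevel-set candidate, and $\partial\tilde G\cap G$ is open rather than closed in $\partial\tilde G$, so the exit-location lemma must be applied to a closed thickening such as $\{z\in\partial\tilde G: W_a(z)\geq H+\eta/2\}$ — but these are bookkeeping repairs and the underlying argument is essentially the paper's.
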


We do not prove this corollary rigorously but rather give an idea of the proof. The main insight comes from the proof of the exit-location result---Section~\ref{s:exit_location}. Namely, after the stabilization around $\delta_a$, the exit from the domain $G$ happens around the level set that touches its boundary $L_H$ (see page \pageref{s:exit_location}). Despite $G$ is not bounded, we assumed $L^{-}_{H} \cap G$ to be bounded. That means that we can introduce a diffusion
\begin{equation*}
    \begin{cases}
    \dd{Y^\sigma_t} \!\!\!\!&= - \nabla \overline{V}(Y^\sigma_t)\dd{t}  - \frac{1}{t} \int_0^t\nabla \overline{F}(Y^\sigma_t - Y^\sigma_s) \dd{s} \dd{t}  + \sigma \dd{W_t}, \\
    Y^\sigma_0 &= x_0 \in \R^d \; \text{a.s.}
    \end{cases}
\end{equation*}
driven by the same Brownian motion as \eqref{eq:SID_main_sys}, where $\overline{V}$ and $\overline{F}$ are modifications of $V$ and $F$ respectively defined in the following way. Let $B_{\text{mod}} \subset \R^d$ be a big enough closed ball that contains the sets $\{X^0_t\}_{t \geq 0}$ and $L_H^-$. First, we let $\overline{V}(x) = V(x)$ and $\overline{F}(x) = F(x)$ for any $x \in B_{\text{mod}}$, Then, we extend $\overline{V}$ and $\overline{F}$ on other points of $\R^d$ such that in the end the potentials $\overline{V}$ and $\overline{F}$ are Lipschitz continuous and belong to $C^2(\R^d; \R)$. 

It means that $\overline{V}$, $\overline{F}$ and $B_{\text{mod}} \cap G$ satisfy Assumptions \ref{A-1} and \ref{A-2} and we can establish the Kramers' type law for $\overline{\tau}^\sigma := \inf\{t \geq 0: Y^\sigma_t \notin B_{\text{mod}} \cap G \}$ and exit-location result for $Y^\sigma$. Note also that for any $t \geq 0$ we have:
\begin{equation*}
    X_{t \wedge \overline{\tau}^\sigma}^\sigma = Y_{t \wedge \overline{\tau}^\sigma}^\sigma \text{ a.s.}
\end{equation*}

The last observation that one needs to do in order to prove the corollary is the following. By the exit-location result, the probability that $Y^\sigma$ exits the domain $B_{\text{mod}} \cap G$ around $\partial G \cap L_H$ tends to one as $\sigma$ tends to zero. It means that the probability that $\tau_G^\sigma = \overline{\tau}^\sigma$ should also tend to one as $\sigma$ tends to zero, since leaving the domain $B_{\text{mod}} \cap G$ on the boundary $\partial G$ also means leaving $G$ itself.
\bigskip

{\bf Acknowledgments: }\emph{This work has been supported by the French ANR grant METANOLIN (ANR-19-CE40-0009).}

\end{document}